\tikzset{
  symbol/.style={
    draw=none,
    every to/.append style={
      edge node={node [sloped, allow upside down, auto=false]{$#1$}}}
  }
}
\newcommand{\N}{\mathbb{N}}
\newcommand{\Z}{\mathbb{Z}}
\newcommand{\R}{\mathbb{R}}
\newcommand{\BC}{\mathbb{C}}
\renewcommand{\H}{\mathbb{H}}
\newcommand{\OO}{{\mathcal O}}
\newcommand{\SL}{\mathrm{SL}}
\newcommand{\GL}{\mathrm{GL}}
\newcommand{\SO}{\mathrm{SO}}
\newcommand{\rO}{\mathrm{O}}
\newcommand{\Sp}{\mathrm{Sp}}
\newcommand{\RU}{\mathrm{U}}
\newcommand{\St}{\mathrm{St}}
\newcommand{\Witt}{\mathrm{Witt}}
\newcommand{\Inv}{\mathrm{inv}}
\newcommand{\Cent}{\mathrm{Cent}}
\newcommand{\disc}{\mathrm{disc}}
\newcommand{\Tran}{\mathrm{Tran}}
\newcommand{\Speh}{\mathrm{Speh}}
\newcommand{\RG}{\mathrm{G}}
\newcommand{\Ind}{\mathrm{Ind}}
\newcommand{\Jac}{\mathrm{Jac}}
\newcommand{\half}[1]{\frac{#1}{2}}
\newcommand{\comment}[1]{}
\newtheorem{thm}{Theorem}[section]
\newtheorem{cor}[thm]{Corollary}
\newtheorem{lemma}[thm]{Lemma}
\newtheorem{prop}[thm]{Proposition}
\newtheorem {conj}[thm]{Conjecture}
\newtheorem {ques/conj}[thm]{Question/Conjecture}
\newtheorem{defn}[thm]{Definition}
\newtheorem{remark}[thm]{Remark}
\newtheorem{assumption}[thm]{Working Hypotheses}
\newtheorem*{globalcond*}{Global Condition}
\newtheorem*{localcond*}{Local Condition}
\newtheorem*{globalconj*}{Global Conjecture}
\newtheorem*{localconj*}{Local Conjecture}
\newtheorem*{nonzero*}{Conjecture on the non-vanishing of the normalized intertwining operators}
\newtheorem*{holo*}{Conjecture on the holomorphicity of the normalized intertwining operators}
\DeclareMathOperator{\Ad}{Ad}
\DeclareMathOperator{\Aut}{Aut}
\DeclareMathOperator{\Gal}{Gal}
\numberwithin{equation}{section}
\let\oldbullet\bullet
\renewcommand{\bullet}{{\vcenter{\hbox{\tiny$\oldbullet$}}}}
\begin{document}

\title[Anti-tempered packets and a lemma of Arthur]
{On anti-tempered local Arthur packets and a lemma of Arthur}

\author{Baiying Liu}
\address{Department of Mathematics\\
Purdue University\\
West Lafayette, IN, 47907, USA}
\email{liu2053@purdue.edu}

\author{Chi-Heng Lo}
\address{Department of Mathematics\\
Purdue University\\
West Lafayette, IN, 47907, USA}
\email{lo93@purdue.edu}

\author{Freydoon Shahidi}
\address{Department of Mathematics\\
Purdue University\\
West Lafayette, IN, 47907, USA}
\email{freydoon.shahidi.1@purdue.edu}

\date{\today}

\keywords{Local Arthur Packets, Local Arthur Parameters, Anti-tempered Local Arthur Packets}

\thanks{The research of the first named author is partially supported by the NSF Grant DMS-1848058. The research of the third named author is partially supported by the NSF Grant DMS-2135021.}

\subjclass[2000]{Primary 11F70, 22E50; Secondary 11F85}

\begin{abstract}
In this paper, following Arthur's ideas, we rework the process of constructing the anti-tempered local Arthur packets for quasi-split classical groups and their pure inner forms. In particular, we present explicit examples illustrating certain gap in a consequential lemma of Arthur and provide a uniform modification,
based on the work of M{\oe}glin, Waldspurger, and Xu. 
\end{abstract}

\maketitle

\section{Introduction}
An important theme in the theory of automorphic forms is to study the discrete spectrum. A seminal work of Arthur (\cite{Art13}) classifies the discrete spectrum of quasi-split symplectic and orthogonal groups $\mathrm{G}$ into a disjoint union of global Arthur packets, parameterized by global Arthur parameters. These global Arthur packets are patched up by the local Arthur packets, consisting of certain unitary representations of $\mathrm{G}$ over local fields, parameterized by local Arthur parameters. Arthur's work has been extended by Mok (\cite{Mok15}) to quasi-split unitary groups, by Kaletha-Minguez-Shin-White (\cite{KMSW14}) to pure inner forms of unitary groups, { and by M{\oe}glin-Renard (\cite{MR18}) to pure inner forms of special orthogonal and unitary groups}. The local Arthur packets are defined via local character relations, using trace formula method. In the process, an important step is to construct the anti-tempered local Arthur packets from the tempered ones for non-Archimedean local fields by applying the Aubert-Zelevinsky duality operator. It was noticed recently that in this step, there is a gap in a consequential lemma (\cite[Lemma 7.1.1]{Art13}, \cite[Lemma 8.2.2]{Mok15}). 
The purpose of this paper is to rework the process of constructing the anti-tempered local Arthur packets for quasi-split classical groups and their pure inner forms, following Arthur's ideas. In particular, we present explicit examples illustrating the gap in Arthur's lemma and provide a uniform modification,
based on the work of M{\oe}glin (\cite{Moe06b}), M{\oe}glin and Waldspurger (\cite{MW06}), and Xu (\cite{Xu17a, Xu17b}). The main results will be explicated with details as follows. 

Let $F$ be a non-Archimedean local field. Let $\mathrm{G}_n=\Sp_{2n}, \SO_{2n+1}, \SO_{2n}^{\alpha}$, $\mathrm{U}_n$, be quasi-split classical groups or their pure inner forms, where $\alpha$ is a square class in $F$, and let $G=G_n=\mathrm{G}_n(F)$. For any irreducible admissible representation $\pi$ of $G$, the Aubert-Zelevinsky duality operator is defined as
\begin{align}\label{eq AZ}
    D_G( \pi)&=\sum_{P \supseteq P_0} (-1)^{\dim(A_{P_0}/ A_{P})} \Ind_{P}^G ( \Jac_{P} (\pi))
\end{align}
where $P_0$ is a fixed minimal parabolic of $G$, $P=MN$ ranges over standard parabolic subgroups of $G$, and $A_{P}$ is the maximal $F$-split torus contained in the center of $M$. We shall also rewrite $A_{P}$ by $A_{M}$.
Then, we know that $D_{G}(\pi)= \beta(\pi) \widehat{\pi}$, where $\widehat{\pi}$ is an irreducible representation and 
        \[ \beta(\pi)= (-1)^{\dim(A_{M_0}/ A_{M_{\pi}})},\]
where $M_0$ is the fixed minimal Levi of $G$, and $M_{\pi}$ is the standard Levi subgroup of $G$ such that $\pi$ is a subquotient of $\Ind_{M_{\pi}}^{G}(r_{\pi})$ for some supercuspidal representation $r_{\pi}$ of $M_{\pi}$.   
The representation $\widehat{\pi}$ is called the Aubert-Zelevinsky involution of $\pi$.

Let $\phi$ be a tempered $L$-parameter, which is also regarded as a local Arthur parameter. { Namely, we regard $\phi$ as a homomorphism
\[ \phi: W_F\times \SL_2^D(\BC)\times \SL_{2}^A(\BC) \to {}^L \RG\]
that is trivial on $\SL_2^A(\BC)$.
Let ${S}_{\phi}:= \textrm{Cent}(\mathrm{Im}(\phi),\widehat{\RG}(\BC)))$ and consider its component groups
\begin{align*}
     \mathcal{S}_{\phi}&:= S_{\phi}/ S_{\phi}^{\circ},\\
    \overline{\mathcal{S}}_{\phi}&:= S_{\phi}/ S_{\phi}^{\circ} Z(\widehat{\RG}(\BC))^{\Gamma},
\end{align*}
where $\Gamma$ is the absolute Galois group.

Consider the anti-tempered local Arthur parameter 
\[ \psi: W_F\times \SL_2^D(\BC)\times \SL_{2}^A(\BC) \to {}^L \RG\]
obtained from $\phi$ by swapping the two $\SL_2(\BC)$, i.e.,
\[ \psi(w,x,y):= \phi(w,y,x).\]
Then we can define $S_{\psi},$ $ \mathcal{S}_{\psi}$ and $\overline{\mathcal{S}}_{\psi}$ similarly. Note that $S_{\phi}=S_{\psi}$. Hence, we may identify $  \mathcal{S}_{\phi}\cong \mathcal{S}_{\psi}$ and $ \overline{\mathcal{S}}_{\phi} \cong \overline{\mathcal{S}}_{\psi}$.}

For quasi-split classical groups, under the above identifications and the fact that the map
\[ \Pi_{\phi} \to \widehat{\overline{\mathcal{S}}}_{\phi}\]
is a bijection, for any $\pi \in \Pi_{\phi}$, let $\sigma_{\pi}$ be the distribution corresponding to the same character in $\widehat{\overline{\mathcal{S}}}_{\psi} \cong \widehat{\overline{\mathcal{S}}}_{\phi}$ characterized by the system of equations given by the endoscopic transfer \cite[(7.1.2)]{Art13}.
Then, Arthur and Mok proved the following lemma on the relation between $\sigma_{\pi}$ and $\widehat{\pi}$. 

\begin{lemma}[{\cite[Lemma 7.1.1]{Art13}, \cite[Lemma 8.2.2]{Mok15}}]\label{lem Arthur}
   Let $G_n$ be a quasi-split classical group.  For any $\pi \in \Pi_{\phi}$, we have
    \[ \langle s_{\psi}, \pi \rangle \sigma_{\pi} = \beta (\phi) \beta(\pi) \widehat{\pi}  \]
    in the Grothendieck group, where $s_{\psi}$ is the image of $\psi(1,1,-1)$ in the component group $\mathcal{S}_{\psi}$,
    \[ \beta(\phi):= (-1)^{\dim(A_{M_0}/ A_{M_{\phi}})},\] 
$M_0$ is the minimal Levi of $G_n$ and $M_{\phi}$ is the minimal Levi of $G$ for which the $L$-group ${}^L M_{\phi}^0$ contains the image of $\phi$.
\end{lemma}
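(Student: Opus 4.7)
The plan is to follow Arthur's original strategy: unfold the endoscopic character identities that define $\sigma_\pi \in \Pi_\psi$ and compare them, via the Aubert-Zelevinsky involution, with the corresponding identities characterizing $\pi \in \Pi_\phi$. The argument is inductive, either on the split rank of $G_n$ or on the complexity of $\phi$, with the base case being anisotropic groups (where $D_G$ is trivial up to sign) or discrete parameters (where one can check the identity by hand). In the inductive step both $\sigma_\pi$ and the components of $\widehat\pi$ can be expressed as endoscopic transfers from packets attached either to proper Levi subgroups or to smaller endoscopic groups, to which the inductive hypothesis applies.

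The technical backbone is the fact that $D_G$ commutes with normalized parabolic induction on the Grothendieck group up to the sign $(-1)^{\dim(A_{M_0}/A_M)}$, namely
\[ D_G \circ \Ind_P^G = (-1)^{\dim(A_{M_0}/A_M)}\, \Ind_P^G \circ D_M, \]
which follows from a direct manipulation of the defining sum \eqref{eq AZ} via transitivity of the Jacquet functor. Since endoscopic transfer for classical groups factors through parabolic induction along the Levi decomposition dictated by $\phi$, this yields a compatibility $D_G \circ \mathrm{transfer}^H = c_H\cdot \mathrm{transfer}^H \circ D_H$ for an explicit sign $c_H$. Applying $D_G$ to the tempered endoscopic identity for $\pi$, invoking the inductive hypothesis on $H$ to convert $D_H$ of tempered virtual characters into anti-tempered ones, and Fourier-inverting over $\overline{\mathcal{S}}_\phi \cong \overline{\mathcal{S}}_\psi$, produces $\beta(\phi)\beta(\pi)\widehat\pi$ on the left and a sum over $\widehat{\overline{\mathcal{S}}}_\psi$ on the right. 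The twist by $\langle s_\psi,\pi\rangle$ arises precisely from the change of base point in the character group caused by swapping $\SL_2^D$ and $\SL_2^A$ (which substitutes $s_\psi$ for $1$), while $\beta(\phi)$ and $\beta(\pi)$ collect the Kottwitz-type signs coming from the dimensions of split centers.

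The main obstacle, and the source of the gap addressed in this paper, is exactly the sign $c_H$ in the interchange between $D_G$ and endoscopic transfer. Arthur's argument implicitly assumes a cleaner compatibility than is available when the endoscopic group $H$ is not a Levi subgroup of $G$, so that $\phi_H$ does not descend from a proper parabolic of $G$; in this regime the naive inductive bookkeeping fails for specific parameters exhibiting non-standard Jordan-block patterns, and the identity requires a correction term. To repair this I would invoke the explicit descriptions of anti-tempered packets via Jacquet modules and Jordan blocks due to M\oe glin \cite{Moe06b}, together with the refined endoscopic framework of M\oe glin-Waldspurger \cite{MW06} and Xu \cite{Xu17a,Xu17b}, which yield explicit models of $\widehat\pi$ in the relevant cases and allow one to verify the identity on the nose rather than through the faulty formal compatibility.
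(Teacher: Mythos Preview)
The fundamental problem is that the lemma you are attempting to prove is \emph{false}, and the paper's treatment of it consists not of a proof but of explicit counter-examples (\S\ref{sec example}) followed by a corrected statement (Theorem~\ref{thm main intro}/\ref{thm main}). Your proposal correctly identifies where Arthur's original argument breaks --- the sign bookkeeping in the compatibility between $D_G$ and endoscopic transfer --- but you then claim this can be repaired so as to ``verify the identity on the nose.'' It cannot: no amount of careful bookkeeping will establish a false identity.

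The paper's examples make this concrete. In \S\ref{example 1}, for $G=\Sp_2(F)$ with $\phi=\chi\otimes S_1\otimes S_1+\chi\otimes S_1\otimes S_1+1\otimes S_1\otimes S_1$, one computes directly that $\sigma_{\pi^+}=\pi^+$ while $\widehat{\pi^+}=\pi^-$, with $\beta(\phi)=\beta(\pi^+)=\langle s_\psi,\pi^+\rangle=1$; the lemma would force $\pi^+=\pi^-$. In \S\ref{example 2}, for $\SO_3(F)$ with $\phi=1\otimes S_2\otimes S_1$, one has $\beta(\phi)=-1$ but $\beta(\pi_{gen})=1$ and $s_\psi$ trivial, so the lemma yields $\widehat{\pi_{gen}}=-\widehat{\pi_{gen}}$.

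The corrected version (Theorem~\ref{thm main}) replaces $\beta(\phi)$ by $\beta(\phi_\psi)$ and twists by the character $\varepsilon^{M/MW}_\psi\in\widehat{\overline{\mathcal{S}}}_\psi$, so that $\pi(\psi,\varepsilon\,\varepsilon^{M/MW}_\psi)=\widehat{\pi(\phi,\varepsilon)}$ rather than $\pi(\psi,\varepsilon)=\widehat{\pi(\phi,\varepsilon)}$. The two facts your sketch would need but which do not hold in the form you assume are: (i) the product $e(G)\alpha(G,G')\beta(\phi_\psi)\beta(\phi_{\psi'})$ is not identically $1$ as a function of $s\in\mathcal{S}_\psi$ but is the generally nontrivial character $\varepsilon^{M/MW}_\psi$ (Lemma~\ref{lem MW/W}); and (ii) the sign that is actually always $1$ is $\varepsilon(s_\psi)\beta(\phi_\psi)\beta(\pi(\phi,\varepsilon))$, not $\varepsilon(s_\psi)\beta(\phi)\beta(\pi(\phi,\varepsilon))$ (Proposition~\ref{prop eq of sign Moeglin}). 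The references you cite (M{\oe}glin, M{\oe}glin--Waldspurger, Xu) are indeed the right tools, and the paper uses them --- but they lead to a modified statement, not a rescue of the original one.
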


By \cite[Theorem 2.2.1]{Art13} and \cite[Theorem 3.2.1]{Mok15}, Lemma \ref{lem Arthur} is equivalent to the following corollary.

\begin{cor}\label{cor Arthur}
    Let $\phi$ be a tempered $L$-parameter of $G_n$ and let $\psi:= \widehat{\phi}$.
    Then for any $\pi \in \Pi_{\phi}$, 
    \begin{enumerate}
        \item $\sigma_{\pi}=\widehat{\pi}$.
        \item $\langle s_{\psi}, \pi \rangle = \beta (\phi) \beta(\pi)$.
    \end{enumerate}
\end{cor}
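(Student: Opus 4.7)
The plan is to prove the equivalence of Lemma \ref{lem Arthur} and Corollary \ref{cor Arthur}, the main ingredient being \cite[Theorem 2.2.1]{Art13} (for quasi-split symplectic and orthogonal groups) and \cite[Theorem 3.2.1]{Mok15} (for quasi-split unitary groups), which together guarantee that the local Arthur packet $\Pi_\psi$ consists of honest irreducible unitary representations indexed by characters of $\widehat{\overline{\mathcal{S}}}_\psi$.

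First, I would dispense with the easy direction Corollary \ref{cor Arthur} $\Rightarrow$ Lemma \ref{lem Arthur}: assuming (1) $\sigma_\pi = \widehat{\pi}$ and (2) $\langle s_\psi, \pi\rangle = \beta(\phi)\beta(\pi)$, one simply multiplies the two equalities to recover the identity stated in the Lemma.

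For the reverse direction, the starting point is the Lemma's identity
\[\langle s_\psi, \pi\rangle \sigma_\pi = \beta(\phi)\beta(\pi)\widehat{\pi}\]
in the Grothendieck group. On the right, $\widehat{\pi}$ is an honest irreducible representation by the discussion following \eqref{eq AZ}, and $\beta(\phi)\beta(\pi) \in \{\pm 1\}$. On the left, $\langle s_\psi, \pi\rangle \in \{\pm 1\}$, and the crucial input is to invoke \cite[Theorem 2.2.1]{Art13}/\cite[Theorem 3.2.1]{Mok15} to conclude that the distribution $\sigma_\pi$ determined by endoscopic transfer is in fact an honest (non-virtual) representation in $\Pi_\psi$. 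Comparing irreducible constituents in the Grothendieck group then forces $\sigma_\pi = \pm \widehat{\pi}$; positivity of $\sigma_\pi$ and $\widehat{\pi}$ as genuine representations rules out the minus sign, yielding $\sigma_\pi = \widehat{\pi}$, which is (1). Substituting this back into the Lemma and cancelling the nonzero irreducible $\widehat{\pi}$ then forces $\langle s_\psi, \pi\rangle = \beta(\phi)\beta(\pi)$, which is (2).

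The main obstacle, and precisely the reason the theorems of Arthur and Mok are cited, is the upgrade of $\sigma_\pi$ from a stable distribution characterized by endoscopic character relations to an honest irreducible unitary representation. Once that upgrade is granted, the rest of the argument is just a bookkeeping of signs and irreducible components in the Grothendieck group.
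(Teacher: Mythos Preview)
Your proposal is correct and matches the paper's approach: the paper simply asserts that the equivalence follows from \cite[Theorem 2.2.1]{Art13} and \cite[Theorem 3.2.1]{Mok15} without spelling out the argument, and your write-up supplies exactly the missing details. One minor tightening: you phrase the input from Arthur/Mok as ``$\sigma_\pi$ is an honest irreducible unitary representation,'' but what those theorems actually give is only that $\sigma_\pi=\pi(\psi,\varepsilon)$ is a \emph{non-negative integral} combination of irreducible characters; irreducibility (and nonvanishing) of $\sigma_\pi$ is then forced by the equation $\pm\sigma_\pi=\pm\widehat{\pi}$ with $\widehat{\pi}$ irreducible, exactly as your next sentence argues.
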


However, recently, based on computations of explicit examples (see \S \ref{sec example}), we realized that in the setting of Lemma \ref{lem Arthur} and Corollary \ref{cor Arthur}, $\pi \in \Pi_{\phi}$ and $\widehat{\pi}$ may not always correspond to the same character in $\widehat{\overline{\mathcal{S}}}_{\phi}$, which implies that
Lemma \ref{lem Arthur} and Corollary \ref{cor Arthur} are not correct in general. 

In this paper, following suggestions of Xu, based on the work of M{\oe}glin (\cite{Moe06b}), M{\oe}glin and Waldspurger (\cite{MW06}), and Xu (\cite{Xu17a, Xu17b}), we provide corrections for Lemma \ref{lem Arthur} and Corollary \ref{cor Arthur}. For simplicity, we only state the version for quasi-split classical groups in the introduction and refer to Theorem \ref{thm main} for the precise statements in the cases of pure inner forms.

\begin{thm}[Theorem \ref{thm main}]\label{thm main intro}
Let $\phi$ be a tempered local Arthur parameter of a quasi-split classical group $G_n$ and let $\psi= \widehat{\phi}$. For any $\varepsilon \in \widehat{\overline{\mathcal{S}}}_{\phi}$,  we have
    \begin{align}\label{eq final mod intro}
        \varepsilon( s_{\psi}) \pi(\psi, \varepsilon \varepsilon^{M/MW}_{\psi}) = \beta (\phi_{\psi}) \beta(\pi(\phi,\varepsilon)) \widehat{\pi(\phi, \varepsilon)} 
    \end{align}
    in the Grothendieck group, where
    the character $\varepsilon^{M/MW}_{\psi}$ is defined in Lemma \ref{lem MW/W}. 
\end{thm}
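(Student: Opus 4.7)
The plan is to follow Arthur's original strategy for \cite[Lemma 7.1.1]{Art13}, but to track carefully the parameterization of elements of the anti-tempered packet $\Pi_\psi$, since this is precisely the locus of the gap. The argument splits into a ``stable'' step on the side of stable distributions and a ``spectral'' step in which the endoscopic character relations are used to extract individual packet elements.

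First I would establish the identity of stable distributions
\[ D_G(\eta_\phi) = \beta(\phi_\psi)\, \eta_\psi, \]
where $\eta_\phi$ (resp.\ $\eta_\psi$) is the stable distribution attached to the tempered (resp.\ anti-tempered) packet. For quasi-split symplectic and special orthogonal groups this is due to \cite[\S 3]{MW06} via explicit Jacquet module computations on the disconnected group $\GL_N^+$; for unitary groups it is Working Hypothesis \ref{assu twisted intro}. The same identity applied to every endoscopic datum $(G', \phi')$ yields $D_{G'}(\eta_{\phi'}) = \beta(\phi_{\psi'})\, \eta_{\psi'}$, which may then be transferred back to $G$.

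Next I would combine this stable identity with the endoscopic character relations that characterize the $\pi(\phi, \varepsilon)$. For each $s \in S_\phi$, the transfer of $\eta_{\phi'}$ is a weighted sum over $\pi \in \Pi_\phi$ with coefficients $\varepsilon(s)$. Applying $D_G$ to both sides and using compatibility of the Aubert-Zelevinsky involution with endoscopic transfer (which introduces the signs $\beta(\phi)\beta(\pi)$ on the group side and $\beta(\phi_\psi)\beta(\phi_{\psi'})$ on the endoscopic side) expresses each involution $\widehat{\pi(\phi,\varepsilon)}$ as a multiple of an element of $\Pi_\psi$ labeled in Arthur's implicit normalization; this recovers Arthur's original formula. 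The final and crucial step is the correction: the parameterization of $\Pi_\psi$ arising from endoscopic transfer disagrees with the Moeglin/Moeglin-Waldspurger parameterization used in the statement by the explicit character $\varepsilon^{M/MW}_\psi$ of $\overline{\mathcal{S}}_\psi$ defined in Lemma \ref{lem MW/W}. Substituting $\varepsilon \mapsto \varepsilon\,\varepsilon^{M/MW}_\psi$ in the packet labeling and collecting the sign $\varepsilon(s_\psi)$ yields \eqref{eq final mod intro}, with the case of pure inner forms handled analogously via the parameterizations of \cite{KMSW14, MR18}.

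The main obstacle is the comparison of parameterizations in this last step: identifying $\varepsilon^{M/MW}_\psi$ precisely and verifying that it accounts for the entire discrepancy between Arthur's labeling via endoscopic transfer and the explicit labeling of Moeglin-Waldspurger \cite{MW06} and Xu \cite{Xu17a, Xu17b} via representations of $\GL_N^+$. A secondary technical issue is confirming that the Aubert-Zelevinsky involution commutes with endoscopic transfer up to the expected signs $\beta(\phi)$, $\beta(\pi)$, and $\beta(\phi_\psi)$; this reduces, via Aubert's formula, to the compatibility of the involution with parabolic induction and Jacquet modules, together with the bookkeeping of Levi ranks hidden in the definition of $\beta$.
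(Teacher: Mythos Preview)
Your overall strategy---the stable identity $D_{G'}(\eta_{\phi'}) = \beta(\phi_{\psi'})\eta_{\psi'}$, Hiraga's compatibility of $D_G$ with endoscopic transfer, then expansion via the tempered character relation---is the paper's. But your account of where $\varepsilon_\psi^{M/MW}$ enters is wrong, and this is precisely the spot where Arthur's original argument broke.

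You describe $\varepsilon_\psi^{M/MW}$ as the discrepancy between the endoscopic labeling of $\Pi_\psi$ and a separate M{\oe}glin--Waldspurger/Xu labeling ``used in the statement.'' There is no such second labeling here: $\pi(\psi,\varepsilon)$ is \emph{defined} by the endoscopic character relation (Conjecture \ref{conj 2.2.1}(b-2)), and that is the only parameterization in play. The character $\varepsilon_\psi^{M/MW}$ arises instead from the sign bookkeeping. Hiraga's theorem gives
\[ \Tran_{G'}^G(\eta_{\psi'}) = \alpha(G,G')\,\beta(\phi_{\psi'})\, D_G\bigl(\Tran_{G'}^G(\eta_{\phi'})\bigr), \]
so the left side depends only on the image $x$ of $s$ in $\mathcal{S}_\psi$ if and only if the sign $\alpha(G,G')\beta(\phi_{\psi'})$ does. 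Lemma \ref{lem MW/W} is the direct computation that $e(G)\alpha(G,G')\beta(\phi_\psi)\beta(\phi_{\psi'})$ is a character of $\overline{\mathcal{S}}_\psi$ in $x$, and that character is \emph{by definition} $\varepsilon_\psi^{M/MW}$; the match with Xu's character of the same name (\S \ref{sec match}) is checked a posteriori and plays no logical role. Without this lemma $\eta_{\psi,x}$ is not well-defined and the linear system for $\pi(\psi,\cdot)$ cannot even be written down. With it, expanding $D_G(\eta_{\phi,x})$ via $D_G(\pi)=\beta(\pi)\widehat{\pi}$ and reindexing by $\varepsilon\mapsto\varepsilon\varepsilon_\psi^{M/MW}$ yields \eqref{eq final mod intro} directly. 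Note that no $\beta(\phi)$ appears anywhere---Hiraga produces $\alpha(G,G')$, and $\beta(\phi_\psi)$ enters only through the stable identity; the stray $\beta(\phi)$ in your sketch is exactly Arthur's error.

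Your ``main obstacle'' of comparing two parameterizations is therefore a red herring. The actual work is the explicit sign computation of Lemma \ref{lem MW/W}, together with Proposition \ref{prop eq of sign Moeglin} (the identity $\varepsilon(s_\psi)\beta(\phi_\psi)\beta(\pi(\phi,\varepsilon))=1$, proved by reducing to supercuspidal $\pi(\phi,\varepsilon)$ via M{\oe}glin's classification). You do not mention the latter, but it is needed for the full Theorem \ref{thm main} to conclude that each $\pi(\psi,\varepsilon)$ is a genuine non-negative character rather than merely a virtual one.
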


From Theorem \ref{thm main intro}, if $\pi \in \Pi_{\phi}$ corresponds to $\varepsilon \in \widehat{\mathcal{S}}_{\phi}$, then $\widehat{\pi}$ should correspond to $\varepsilon \varepsilon^{M/MW}_{\psi}$.  The character $\varepsilon^{M/MW}_{\psi}$ is not always trivial. In the case of $\varepsilon^{M/MW}_{\psi}$ being trivial, i.e., $\pi$ and $\widehat{\pi}$ correspond to the same character, we need to use $\beta(\phi_{\psi})$ (instead of $\beta(\phi)$) on the right hand side of the equation (see the Example in \S \ref{example 2}).

Theorem \ref{thm main intro} is proved 
by reworking the process of constructing the anti-tempered local Arthur packets of $G_n$ (see Conjecture \ref{conj 2.2.1}, Remark \ref{rmk obtain packets}, and Theorem \ref{thm main}), which is reduced to the following three steps:

\begin{enumerate}
\item (Proposition \ref{prop twisted}) Let $\phi$ be a tempered local Arthur parameter of $G_n$ and $\psi=\widehat{\phi}$. We verify the equality
\begin{align}\label{eq twisted AZ intro}
    D_{G_n}(\eta_{\phi})= \beta(\phi_{\psi}) \eta_{\psi},
\end{align}
where $\eta_{\phi}$ (resp. $\eta_{\psi}$) is the stable distributions associated to $\phi$ (resp. $\psi$) characterized by twisted endoscopic transfer identity. When $G_n$ is symplectic or special orthogonal groups, based on \cite[Appendix]{Xu17b}, the equality \eqref{eq twisted AZ intro} is reduced to the computation of Aubert-Zelevinsky involution of certain representation $\pi^+$ of a disconnected group $\GL_N^+(E)$ associated to the classical group $G_n$, which is done in \cite[\S 3]{MW06} (also see \cite[\S 6.3]{Xu17b}). The argument also works for unitary groups.  For completeness, we provide a uniform proof for quasi-split classical groups in \S \ref{sec working hypothesis}, following \cite{MW06, Xu17b}.

    \item (Lemma \ref{lem MW/W}) Let $\psi$ be an anti-tempered local Arthur parameter of $G_n$. 
   Given an endoscopic data $(G',s, \xi)$ (see \S \ref{sec endoscopic}), 
   $\psi$ factors through and produce a local Arthur parameter $\psi'$ of $G'$. 
   Then the product of signs 
    \[ e(G_n)\alpha(G_n,G') \beta(\phi_{\psi}) \beta(\phi_{\psi'})\]
    only depends on the image of $s$ in $\mathcal{S}_{\psi}$. Here $e(G_n)$ is the Kottwitz sign of $G_n$. Moreover, the product of sign is a character of $\overline{\mathcal{S}}_{\psi}$ trivial on $s_{\psi}$, denoted by $\varepsilon^{M/MW}_{\psi}$. We verify this step by direct computation.
   \item (Proposition \ref{prop eq of sign Moeglin}) For any $\varepsilon \in \widehat{\mathcal{S}}_{\phi}$, we have that
  \begin{align*}
      \beta(\pi(\phi,\varepsilon)) \beta(\phi_{\psi}) \varepsilon(s_{\psi})=1.
  \end{align*} 
  {
  We first verify this equality for supercuspidal representations in $\Pi_{\phi}$ using M\oe glin's parametrization of supercuspidal representations (\cite[Theorem 2.5.1]{Moe11} and \cite[Theorem 3.4]{MR18}). Then, the general case of tempered representations is proved by induction based on the Jacquet module of tempered representations.}
\end{enumerate}

{
Suppose $G$ is a non-classical group where local Langlands correspondence is known, e.g., $G=G_2$ by \cite{AX22, GS23}. Then, the ingredients in the two steps above are well-defined. Suppose further that these statements (Lemma \ref{lem MW/W}, Proposition \ref{prop eq of sign Moeglin}) hold for $G$. Then our argument provides a construction/definition of anti-tempered local Arthur packets of $G$. See \S \ref{sec non-classical gp} for more details.}

At the end of this paper, we compute the $L$-parameter of the Aubert-Zelevinsky involution of generic representations of quasi-split classical groups following the idea of \cite{Jan18} under an assumption (Working Hypothesis \ref{assu AZ}). For symplectic and split odd special orthogonal groups, this assumption has been verified in \cite{Ato20}. Other cases follow similarly and will be verified explicitly in future work. This result has its own interest and is closely related to the enhanced Shahidi conjecture (see \cite{HLLZ22,LS23}) and the upper bound conjecture of wavefront sets of representations (see \cite{HLLS24}). See Remark \ref{remark generic dual} for more discussion. 

Following is the structure of this paper. In \S \ref{sec classical groups}, we introduce the classical groups considered in this paper. In \S \ref {sec parameters}, we recall the preliminaries on local Arthur parameters, local $L$-parameters, components groups and their characters, and endoscopic groups. In \S \ref{sec example}, we provide two detailed examples  on groups $\Sp_2(F)$ and $\SO_3(F)$, respectively, illustrating the gaps in Lemma \ref{lem Arthur} and Corollary \ref{cor Arthur}. In \S \ref{construction of anti-tempered}, following Arthur's ideas, we construct anti-tempered local Arthur packets for all quasi-split classical groups and their pure inner forms uniformly. Along the way, we fix the gap in \cite[Lemma 7.1.1]{Art13} and provide the modification. Then, we discuss the generalization of the construction to non-classical groups. In \S \ref{sec MW/W} and \S \ref{sec eq of sign}, we prove Lemma \ref{lem MW/W} and Proposition \ref{prop eq of sign Moeglin}, respectively, which are two important steps in our construction of anti-tempered local Arthur packets. In \S \ref{sec working hypothesis}, we provide details for Proposition \ref{prop twisted}. In \S \ref{dual of generic reps}, we compute the $L$-parameter of the Aubert-Zelevinsky involution of generic representations of quasi-split classical groups. 

\subsection*{Acknowledgements} 
The authors would like to thank Dihua Jiang and Bin Xu for their interests, helpful comments and suggestions, and James Arthur for encouragements and helpful communications. The authors also would like to thank Hiraku Atobe for helpful communications. The second named author is grateful to the hospitality of the National Center for Theoretical Sciences during his visit and
helpful discussions with Cheng-Chiang Tsai, in which the examples in \S \ref{sec example} were motivated.

\section{Classical groups}\label{sec classical groups}
In this section, we specify the classical groups considered in this paper.

\subsection{Quadratic space}

Let $E=F$ or a quadratic extension $F(\delta)$ of $F$, and let $\sigma \in \Gal(E/F)$ be the trivial element in the first case and the non-trivial element in the second case. Let $(V,q_V)$ be a finite dimensional vector space $V$ over $E$ equipped with a $\varepsilon$-Hermitian form form $q_V$, where $\varepsilon \in \{\pm 1\}$. That is, for any $v,w,u \in V$ and $\alpha, \beta \in E$, we have 
\begin{align*}
    \begin{cases}
        q_V(\alpha v +\beta w, u)= \alpha q_V(v,u)+ \beta q_V(w,u),\\
    q_V(u,v)= \varepsilon(v,u)^{\sigma}.
    \end{cases}
\end{align*}
We shall sometimes abbreviate $(V,q_V)$ by $V$ if there is no confusion. If $E=F$, we say $V$ is orthogonal if $\varepsilon=1$ and symplectic if $\varepsilon=-1$. If $E\neq F$, we say $V$ is Hermitian if $\varepsilon=1$ and skew-Hermitian if $\varepsilon=-1$. 

We shall consider the following invariants of $V$. Let $\H$ denote the hyperbolic plane, i.e., $\H = Ev + E v^{\ast}$ with bilinear form $q_{\H}(v,v)=q_{\H}( v^{\ast},v^{\ast})=0$ and $q_{\H}(v, v^{\ast} )=1.$ The Witt index of the quadratic space $(V,q_V)$ is an integer $\Witt(V)=\mathfrak{r}$ such that $(V,q_{V}) \cong \H^{\mathfrak{r}} \oplus (V_{an}, q_{an})$ where $(V_{an}, q_{an})$ is anisotropic. 

Let $\mathfrak{n}:= \dim_E(V)$. Take an orthogonal basis $\{e_{1},\dots, e_{\mathfrak{n}}\}$ of $V$ with
\[ q_V( e_i, e_i)= d_{i}.\]
Then the discriminant of $(V,q_V)$ is given by
\[ \disc(V):= (-1)^{\half{\mathfrak{n}(\mathfrak{n}-1)}} \prod_{i=1}^{\mathfrak{n}} d_i \in \begin{cases}
\delta^{\mathfrak{n}} \cdot  F^{\times}/ \N E^{\times } & \text{ if }E \neq F, \varepsilon= -1,\\
    F^{\times}/ \N E^{\times } & \text{ otherwise,}
\end{cases} \]
where $\N E^{\times }= \{x \sigma(x) \ | \ x \in E^{\times }\}$. Let $\epsilon(V)$ be the Hasse invariant of $V$. Thus, if $E = F$, let $(\cdot, \cdot)_F$ denote the Hilbert symbol, then 
\[ \epsilon(V):=\prod_{i <j} (d_i,d_j)_F \in \{ \pm 1\}.\]
If $E \neq F$, then 
\[ \epsilon(V):= \begin{cases}
   ( \disc(V), \delta^2 ) & \text{ if }\varepsilon \neq -1,\\
   ( \delta^{-\mathfrak{n}}\disc(V), \delta^2) & \text{ if }\varepsilon =-1.
\end{cases}\]

If $E=F$, then the isometric class of the non-degenerate quadratic space $(V,q_V)$ is uniquely determined by $\dim(V)$, $\disc(V)$ and $\epsilon(V)$. If $E \neq F$, then the isometric class of the non-degenerate quadratic space $(V,q_V)$ is determined by $\dim(V)$ and $\disc(V)$ (and $\varepsilon$).

\subsection{Classical groups}
Let $(V,q_V)$ be a quadratic space considered in the previous subsection. 
The classical groups considered in this paper are $G=G(V):=\textrm{Isom}(V,q_V)^{\circ}$, the identity component of the group
\[\textrm{Isom}(V,q_V):=\{ T \in \Aut(V)\ | \ q_V(Tv, Tw)= q_V(v,w), \ \forall v,w \in V \}. \]
Recall $\mathfrak{n}:= \dim_E(V)$. Let $ n:= \left\lfloor \half{\mathfrak{n}}\right\rfloor$, and let $K$ be the splitting field of the quasi-split inner form of $G$. We identify ${}^L G=\widehat{G}(\BC) \rtimes \Gal(K/F)$ as in \cite[\S 7]{GGP12}. 
\begin{center}
    \begin{tabular}{|c|c|c|c|c|}
     \hline 
      \rule{0pt}{3ex} $(E,\varepsilon)$&  $G$& $\widehat{G}$ & $K$ & ${}^L G$\\
     \hline 
    \rule{0pt}{3ex}    $E=F,$& $\SO(V),$ & $\Sp_{2n}(\BC)$& $F$ & $\Sp_{2n}(\BC)$ \\
     $\varepsilon=+1 $& $\mathfrak{n}=2n+1$ & &  &  \\
     \hline
 \rule{0pt}{3ex}     $E=F,$& $\SO(V),$ & $\SO_{2n}(\BC)$& $F(\sqrt{\disc(V)})$ & $\SO_{2n}(\BC)$ if $\disc(V)\in (F^{\times})^2$ \\
     $\varepsilon=+1 $& $\mathfrak{n}=2n$ & &  &  $\rO_{2n}(\BC)$ if $\disc(V)\not\in (F^{\times})^2$ \\
     \hline
     \rule{0pt}{3ex}     $E=F,$& $\Sp(V),$ & $\SO_{2n+1}(\BC)$& $F$ & $\SO_{2n+1}(\BC)$  \\
     $\varepsilon=-1 $& $\mathfrak{n}(V)=2n$ & &  &   \\
     \hline
      \rule{0pt}{3ex}     $E\neq F,$& $\RU(V),$ & $\GL_{\mathfrak{n}}(\BC)$& $E$ & $\GL_{\mathfrak{n}}(\BC) \rtimes \Gal(E/F)$  \\
     $\varepsilon= \pm 1 $& $\mathfrak{n}\in\{2n,2n+1\}$ & &  &   \\
     \hline
\end{tabular}
\end{center}

The map from the isometric class of $V$ to the isomorphism class of $G(V)$ is not an injection. However, fixing the group $G(V)$, we may classify the pure inner forms $G(V')$ of $G(V)$ by the isometric classes of quadratic spaces $V'$ with certain conditions, which we describe case by case below. Note that the $F$-rank of $G(V)$ is equal to $\Witt(V)$.

\begin{enumerate}
    \item [1.] Special odd orthogonal groups: The inner forms of $\SO(V)$ are $\SO(V')$ with $\dim(V')=\dim(V)$ and $\disc(V')=\disc(V)$. If $2n+1= \dim(V)\geq 3$, then there exists a $V'$ satisfying above conditions but $\epsilon(V') \neq \epsilon(V)$, unique up to isometric.

    If $\epsilon(V)=1$, then $\Witt(V)=n$ and $\SO(V)$ is split. In this case if $n\geq 1$, then $\Witt(V')=n-1$ and $\SO(V')$ is not quasi-split.
    
    \item [2.] Symplectic groups: The only pure inner form of $\Sp(V)$ is itself. We have $\Witt(V)=n$ and $\Sp(V)$ is split. 
    
    \item [3.] Special even orthogonal groups: The inner forms of $\SO(V)$ are $\SO(V')$ with $\dim(V')=\dim(V)$ and $\disc(V')=\disc(V)$. If $2n= \dim(V)\geq 4$, then there exists a $V'$ satisfying above conditions but $\epsilon(V') \neq \epsilon(V)$, unique up to isometric.

    If $\disc(V) \in (F^{\times})^2$ and $\epsilon(V)=1$, then $\SO(V)$ is split. In this case if $n\geq 2$, then $ \Witt(V')=n-2$ and $\SO(V')$ is not quasi-split. If $\disc(V) \not\in (F^{\times})^2$, then $\SO(V)$ is quasi-split but not split, and $\SO(V) \cong \SO(V')$ over $F$.

    \item [4.] Unitary groups: The inner forms of $\RU(V)$ are $\RU(V')$ where $V'$ is Hermitian or skew-Hermitian with $\dim(V')=\dim(V)$. There is a Hermitian $V'$ satisfying above condition but $\disc(V) \neq \disc(V')$, unique up to isometric. 
    
    If $\dim_E(V)=2n+1$ is odd, then  $\Witt(V)= n$. The group $\RU(V)$ is quasi-split and $\RU(V) \cong \RU(V')$ over $F$. If $\dim_E(V)=2n$ is even and $\epsilon(V)=1$, then $\Witt(V)=n$ and $\RU(V)$ is quasi-split. In this case, $\Witt(V')=n-1$ and $\RU(V')$ is not quasi-split. 
\end{enumerate}

In this paper, the classical group $G$ is always associated to a quadratic space $(V,q_V)$, and hence we may distinguish $G$ among its pure inner forms.

\subsection{Parabolic subgroups}
Recall that we have 
\[ (V,q_V) \cong \H^{\mathfrak{r}} \oplus (V_{an}, q_{an}),\]
where $\mathfrak{r}= \Witt(V)$ and $(V_{an}, q_{an})$ is anisotropic. Let 
\[ V_{an,r}:= \H^{r} \oplus (V_{an}, q_{an}). \]
Any Levi subgroup of $G(V)$ is isomorphic to 
\begin{align}\label{eq parabolic subgroups}
    \GL_{n_1}(E) \times \cdots \times \GL_{n_{f}}(E) \times G(V_{an,\mathfrak{r}-r }  ),
\end{align}
where $0 \leq r \leq \mathfrak{r}$ and $[n_1,\ldots, n_f]$ is a partition of $r$. A minimal parabolic subgroup corresponds to the partition $[\underbrace{1, \ldots ,1}_{\mathfrak{r}\text{-copies}}]$ of $\mathfrak{r}$.

Suppose $P$ is a parabolic subgroup of $G(V)$ with Levi subgroup isomorphic to \eqref{eq parabolic subgroups}. Let $\tau_i$ be a representation of $\GL_{n_i}(E)$ for $i=1,\ldots , f$ and $\pi$ a representation of $G(V_{an, \mathfrak{r}-r})$. We denote the normalized parabolic induction $\Ind_{P}^G (\tau_1 \otimes \cdots \otimes \tau_f \otimes \pi)$ by
\[ \tau_1 \times \cdots \times \tau_f \rtimes \pi.\]

\section{Parameters, component groups, and endoscopic groups}\label{sec parameters}

In this section, we recall the definition of local Arthur parameters of classical groups and combinatorial descriptions of the components groups and their characters; the definitions of endoscopic groups and local $L$-parameters.

\subsection{Local Arthur parameters}
Let $\RG$ be a connected reductive group defined over $F$ and let $G=\RG(F)$. A local Arthur parameter of $G$ is a continuous homomorphism
\[ \psi: W_F \times \SL_2^D(\BC) \times \SL_2^A(\BC) \to {}^L G\]
with the following conditions.
\begin{enumerate}
    \item [(i)] For any $w \in W_F$, $\psi(w,1,1)$ is semisimple. If $\lambda$ is an eigenvalue of $\psi(w,1,1)$, then $\min(|w|^{-1/2},|w|^{1/2})< |\lambda| < \max(|w|^{-1/2},|w|^{1/2})$.
    \item [(ii)] The projection onto $\Gal(K/F)$ is the natural map $W_F/W_K \to \Gal(K/F)$.
    \item [(iii)] The restriction to both $\SL_2(\BC)$ are algebraic. 
    \item [(iv)]  The homomorphism $\psi$ is $G$-relevant. In other words, if $\textrm{Im}(\psi)$ is contained in some Levi subgroup ${}^L P$ of ${}^L G$, then there is a corresponding parabolic subgroup $\mathrm{P}$ of $\RG$ defined over $F$. 
\end{enumerate}
We shall call $\SL_2^D(\BC)$ the Deligne-$\SL_2(\BC)$ and $\SL_2^A(\BC)$ the Arthur-$\SL_2(\BC)$.
Two local Arthur parameters are equivalent if they are conjugate by an element of $\widehat{\RG}(\BC)$. By abuse of notation, we shall not distinguish between $\psi$ and its equivalence class. 

The local Arthur parameter $\psi$ is called \emph{generic} if $\psi|_{\SL_2^A(\BC)}$ is trivial, and is called \emph{tempered} if further $\psi|_{W_F}$ has bounded image. From each local Arthur parameter $\psi$, we may associate another local Arthur parameter $\widehat{\psi}$ by
\[ \widehat{\psi}(w,x,y):= \psi(w,y,x).\]
The local Arthur parameter $\psi$ is called anti-generic (resp. anti-tempered) if $\widehat{\psi}$ is generic (resp. tempered).

For each local Arthur parameter $\psi$ of $G$, let $S_{\psi}$ denote the centralizer of the image of $\psi$ in $ \widehat{G}(\BC)$. We denote the associated component groups by
\begin{align*}
    \mathcal{S}_{\psi}&:= S_{\psi}/ S_{\psi}^{\circ},\\
    \overline{\mathcal{S}}_{\psi}&:= S_{\psi}/ S_{\psi}^{\circ} Z(\widehat{G}(\BC))^{\Gamma}.
\end{align*}
It is clear that $S_{\psi}=S_{\widehat{\psi}}$,  $\mathcal{S}_{\widehat{\psi}}\cong \mathcal{S}_{\psi}$ and $\overline{\mathcal{S}}_{\widehat{\psi}} \cong \overline{\mathcal{S}}_{\psi}$. For classical groups considered in this paper, these component groups are always abelian 2-groups. 

\subsection{A combinatorial description of the component groups}\label{sec component group}
In this subsection, we give an explicit combinatorial description of $\mathcal{S}_{\psi},$ $\overline{\mathcal{S}}_{\psi}$ and their Pontryagin duals $\widehat{\mathcal{S}_{\psi}}$, $\widehat{\overline{\mathcal{S}}}_{\psi}$ by decomposing the local Arthur parameter $\psi$. First, we recall the construction of the associated representation $\psi_{\GL}$. Then, we recall the computation of its component group and relate it with $\mathcal{S}_{\psi},$ $\overline{\mathcal{S}}_{\psi}$ from \cite[\S 4, 8]{GGP12}

If $G=G(V)$ is a symplectic group or a special orthogonal group, then we fix a standard embedding $\xi: {}^L G \hookrightarrow \GL(M)$ where $M\cong \BC^{2n+1}$ or $M\cong \BC^{2n}$. For each local Arthur parameter $\psi$, we define 
\[\psi_{\GL}:=\xi \circ \psi: W_E \times \SL_2^D(\BC) \times \SL_2^A(\BC) \to \GL(M).\]
If $G= \RU(V)$, for each local Arthur parameter $\psi$, we define 
\[\psi_{\GL}:= \psi|_{W_E \times \SL_2^D(\BC) \times \SL_2^A(\BC)}: W_E \times \SL_2^D(\BC) \times \SL_2^A(\BC) \to \GL(M).\]

The representation $\psi_{\GL}$ is (conjugate-)self-dual in the following sense. Take any $s \in W_{F}$ that generates the quotient $W_F/W_E\cong \Gal(E/F)$. Define a representation $\psi_{\GL}^s$ via conjugating by $s$:
\[ {}^s \psi_{\GL}(w,x,y):= \psi_{\GL}( s w s^{-1},x,y  ).\]
Then, there exists a non-degenerate bilinear form $B$ on $M$ such that for any $m_1, m_2 \in M$ and $ \tau \in W_E \times \SL_2^D(\BC) \times \SL_2^A(\BC)$,
\begin{align}\label{eq conjugate self-dual}
\begin{cases}
    B( \psi_{\GL}(\tau) m_1, {}^s \psi_{\GL}(\tau) m_2 )= B(m_1,m_2),\\
    B(m_1, m_2)= \widehat{\varepsilon} B(m_2, \psi_{\GL}(s^2,1,1) m_1).
\end{cases}
\end{align}
where the sign $\widehat{\varepsilon}=1$ if $G= \Sp(V)$, $\SO(V)$ with $\dim(V)=2n$, or $G= \RU(V)$ with $\dim_E(V)$ being odd, and $\widehat{\varepsilon}=-1$ otherwise (see \cite[Theorem 8.1]{GGP12}). This gives isomorphisms of representations
\begin{align*}
    f&:{}^s \psi_{\GL} \to \psi_{\GL}^{\vee},\\
{}^s f^{\vee}&: {}^s \psi_{\GL} \to {}^s ({}^s \psi_{\GL}^{\vee})  \xrightarrow[]{\psi_{\GL}(s^2)} \psi_{\GL}^{\vee},
\end{align*}
such that $f= \widehat{\varepsilon}\  {}^s f^{\vee}$. The equivalence class of $\psi^{s}$ is independent of the choice of $s$, and hence we may write ${}^\sigma \psi:=\psi^s$. Then, the isomorphisms above show that $\psi \cong {}^\sigma \psi^{\vee}$.

The map $\psi \mapsto \psi_{\GL}$ is a surjection onto the set of local Arthur parameters of $\GL_N(E)$ that is (conjugate-)self-dual with sign $\widehat{\varepsilon}$. If $G$ is not an even special orthogonal group, then the map $\psi \mapsto \psi_{\GL}$ is an injection. If $G$ is an even special orthogonal group, then $(\psi_{1})_{\GL}$ is equivalent to $(\psi_2)_{\GL}$ if and only if $\psi_1$ is equivalent to one of $\{\psi_2, \psi_2^{c}\}$, where $\psi_{2}^c$ is the outer conjugation of $\psi_2$ (\cite[Theorem 8.1(ii)]{GGP12}). Moreover, if $G$ is an even special orthogonal group, then the quadratic character  
\[ \det (\psi_{\GL}): W_{F} \to \{\pm 1\}\]
corresponds to $\disc(V)$, which determines the group $G(V)$ up to isomorphism.

Now we decompose $\psi_{\GL}$ as a direct sum of irreducible representations
\[ \psi_{\GL}= \bigoplus_{i \in I'} (\rho|\cdot|^{x_i} \otimes S_{a_i} \otimes S_{b_i})^{\oplus m_i},\]
where 
\begin{enumerate}
    \item [$\oldbullet$] The representation $\rho_i$ is a self-dual irreducible representation of $W_{E}$ and $x_i \in \R$;
     \item [$\oldbullet$] The representation $S_{y}$ is the $y$-dimensional irreducible algebraic representation of $\SL_2(\BC)$;
     \item [$\oldbullet$] The irreducible representations $\{\rho|\cdot|^{x_i} \otimes S_{a_i} \otimes S_{b_i} \}_{i \in I'} $ are pairwise non-isomorphic, and $m_i$ indicates the multiplicity.
\end{enumerate} 
With this decomposition, the Condition (i) for the local Arthur parameter implies $ |x|<1/2$.

For each $i \in I'$, one can define $ {}^\sigma \rho_i^{\vee}$ and ${}^\sigma(\rho_i|\cdot|^{x_i} \otimes S_{a_i} \otimes S_{b_i})^{\vee}$ similarly. Let $\widehat{\varepsilon}_i=0$ if $\rho_i \otimes S_{a_i} \otimes S_{b_i} \not\cong {}^\sigma(\rho_i|\cdot|^{x_i} \otimes S_{a_i} \otimes S_{b_i})^{\vee},$ which is equivalent to $x_i \neq 0$ or $\rho_i \not \cong  {}^\sigma \rho_i^{\vee}$. Otherwise, let $\widehat{\varepsilon}_i$ denote the sign in \eqref{eq conjugate self-dual}. Consider the decomposition of index set $I'=I_{gp} \sqcup I_{bp} \sqcup I_{nsd}'$ as follows.
\begin{align*}
    I_{nsd}':= \{ i \in I \ | \  \widehat{\varepsilon}_i=0 \},\ 
    I_{bp} :=\{ i \in I\ | \ \widehat{\varepsilon}_i= - \widehat{\varepsilon} \},\ 
     I_{gp} :=\{ i \in I\ | \ \widehat{\varepsilon}_i=  \widehat{\varepsilon} \}.
\end{align*}
Since $\psi$ is (conjugate-)self-dual with sign $\widehat{\varepsilon}$, we may rewrite the decomposition as 
\begin{align}\label{eq decomp psi_GL}
    \psi_{\GL}&= \bigoplus_{i \in I_{gp}} (\rho_i \otimes S_{a_i} \otimes S_{b_i})^{\oplus m_i} + \bigoplus_{i \in I_{bp}} (\rho_i \otimes S_{a_i} \otimes S_{b_i})^{\oplus m_i} \\
    &+\bigoplus_{i \in I_{nsd}} \left(\rho_i |\cdot|^{x_i}\otimes S_{a_i} \otimes S_{b_i}+  {}^{\sigma}\rho_i^{\vee} |\cdot|^{-x_i}\otimes S_{a_i} \otimes S_{b_i} \right)^{\oplus m_i},  
\end{align}
where $I_{nsd}$ is a subset of $I_{nsd}'$ with half of the size. Note that if $i \in I_{bp}$, $m_i$ must be even. Let $\Aut(M,B)$ be the subset of $\GL(M)$ preserving the bilinear form $B$, and let $C_{\psi}$ denote the subset of $\Aut(M,B)$ that centralizes the image of $\psi_{\GL}$. Then 
\[ C_{\psi} \cong \prod_{i \in I_{gp}} \rO_{m_i}(\BC) \times  \prod_{i \in I_{bp}} \Sp_{m_i}(\BC)  \times \prod_{i \in I_{nsd}} \GL_{m_i}(\BC). \]
In particular, 
\[ \mathcal{C}_{\psi}:=C_{\psi}/(C_{\psi})^{\circ} \cong (\Z/2\Z)^{|I_{gp}|} \]
is an abelian 2-group. We shall identify it as the set of functions $e: I_{gp} \to \{ \pm 1\}$, and write 
\[ e(\rho_i \otimes S_{a_i} \otimes S_{b_i}):= e(i).  \]
Let $e_i$ denote the image of a simple reflection in $\rO_{m_i}(\BC)$ (and identity elsewhere) in $\mathcal{C}_{\psi_{\GL}}$, which corresponds to the function
\[ e_i( \rho_j \otimes S_{a_j} \otimes S_{b_j})= \begin{cases}
    -1 & \text{ if }i=j,\\
    1 & \text{ if }i \neq j.
\end{cases}\]
The image of $-\textrm{id}_M \in \Aut(M,B)$ in $\mathcal{C}_{\psi}$, denoted by $e_0$, is given by
\[ e_0(\rho_i \otimes S_{a_i}\otimes S_{b_i}):= (-1)^{m_i},  \]
and the image of $s_\psi:= \psi_{\GL}(1, 1, -1 )$, denoted by $e_\psi$, is given by
\[ e_{\psi}(\rho_i \otimes S_{a_i}\otimes S_{b_i}):= (-1)^{ (b_i-1)m_i }.\]

We identify the Pontryagin dual $\widehat{\mathcal{C}}_{\psi}$ with the set of functions $\varepsilon: I_{gp} \to \{ \pm 1\}$ via the inner product
\[ \langle \varepsilon , e \rangle := \prod_{i \in I_{gp}} \varepsilon(\rho_i \otimes S_{a_i} \otimes S_{b_i}) \ast e(\rho_i \otimes S_{a_i} \otimes S_{b_i}), \]
where 
\[ \varepsilon(\rho_i \otimes S_{a_i} \otimes S_{b_i}) \ast e(\rho_i \otimes S_{a_i} \otimes S_{b_i})= \begin{cases}
    -1 & \text{ if }e(\rho_i \otimes S_{a_i} \otimes S_{b_i})=\varepsilon(\rho_i \otimes S_{a_i} \otimes S_{b_i})=-1,\\
    1 & \text{ otherwise.}
\end{cases}\]
Sometimes we write $\varepsilon(e):= \langle \varepsilon, e \rangle.$
The determinant map $\Aut(M,B) \to  \{\pm 1\}$ induces $\det: \mathcal{C}_{\psi_{\GL}} \to \{ \pm 1 \}:$ 
\[ \det (e) = \prod_{i \in I_{gp}} e(\rho_i \otimes S_{a_i} \otimes S_{b_i})^{\dim(\rho_i \otimes S_{a_i} \otimes S_{b_i})},  \]
which corresponds to the element $\varepsilon_0 \in \widehat{\mathcal{C}}_{\psi}$: 
\[\varepsilon_0( \rho_i \otimes S_{a_i} \otimes S_{b_i}):= (-1)^{\dim(\rho_i \otimes S_{a_i} \otimes S_{b_i}) }.\]
 Let $\mathcal{C}_{\psi}^{+}:= \{e \in \mathcal{C}_{\psi} \ | \ \det(e)=1\}$. Then $ \widehat{\mathcal{C}}_{\psi}^{+}= \widehat{\mathcal{C}}_{\psi}/\varepsilon_0$.
Now we recall the relation between $ \mathcal{C}_{\psi}$ and $\mathcal{S}_{\psi}$.

\begin{thm}[{\cite[Theorem 8.1 (iii)]{GGP12}}]
Let $\psi$ be a local Arthur parameter of $G$. If $G= \RU(V)$, then $\mathcal{S}_{\psi} \cong  \mathcal{C}_{\psi}$. If  $G= \SO(V)$ or $\Sp(V)$, then $\mathcal{S}_{\psi} \cong  \mathcal{C}_{\psi}^{+}$
\end{thm}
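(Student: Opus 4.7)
The plan is to exploit the decomposition of $\psi_{\GL}$ recorded just above the statement together with the explicit description of $C_\psi$, identifying $S_\psi$ as a specific subgroup of $C_\psi$ cut out by the requirement of lying in $\widehat{G}(\BC) \subseteq \Aut(M,B)$. Since $\{\pm 1\}$ is discrete, any homomorphism from $C_\psi$ to $\{\pm 1\}$ factors through $\mathcal{C}_\psi = \pi_0(C_\psi) \cong (\Z/2\Z)^{|I_{gp}|}$, so passage to component groups will be painless; the real work is identifying the correct subgroup of $C_\psi$.

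For unitary groups, $\widehat{G}(\BC) = \GL(M)$, but the parameter $\psi$ takes values in ${}^L G = \GL(M) \rtimes \Gal(E/F)$, so $S_\psi$ must commute with $\psi(s)$ for a lift $s \in W_F$ of the nontrivial Galois element in addition to $\psi(W_E \times \SL_2^D \times \SL_2^A)$. Unwinding the semidirect-product action and comparing with \eqref{eq conjugate self-dual} shows that this additional commutativity condition is precisely equivalent to preserving the bilinear form $B$. Hence $S_\psi = C_\psi$ as subgroups of $\GL(M)$, and so $\mathcal{S}_\psi \cong \mathcal{C}_\psi$.

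For symplectic and orthogonal groups, $\widehat{G}(\BC)$ sits inside $\Aut(M,B)$ as the kernel of $\det: \Aut(M,B) \to \{\pm 1\}$: this kernel is all of $\Sp(M)$ when $\widehat{\varepsilon}=-1$ (so $\widehat{G} = \Sp_{2n}(\BC)$), and is the index-two subgroup $\SO(M)$ when $\widehat{\varepsilon}=+1$. Therefore $S_\psi = \ker(\det|_{C_\psi})$. Combined with the explicit formula
\[ \det(e) = \prod_{i \in I_{gp}} e(\rho_i \otimes S_{a_i} \otimes S_{b_i})^{\dim(\rho_i \otimes S_{a_i} \otimes S_{b_i})} \]
together with the fact that $\det$ factors through $\mathcal{C}_\psi$, one obtains $\mathcal{S}_\psi \cong \ker(\det: \mathcal{C}_\psi \to \{\pm 1\}) = \mathcal{C}_\psi^+$. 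When $\widehat{G}(\BC) = \Sp_{2n}(\BC)$, each good-parity summand is symplectic and hence of even dimension, so $\det \equiv 1$ on $\mathcal{C}_\psi$ and the equality $\mathcal{C}_\psi^+ = \mathcal{C}_\psi$ holds automatically, consistent with the statement.

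The main subtle point I anticipate is the implicit surjectivity claim: one needs $\mathcal{S}_\psi$ to be \emph{all} of $\ker(\det|_{\mathcal{C}_\psi})$, which requires $C_\psi^\circ \subseteq \ker(\det)$. This follows from continuity of $\det$ and connectedness of $C_\psi^\circ$. In the even orthogonal case there is the additional wrinkle that $\psi_{\GL}$ only determines $\psi$ up to the outer automorphism of $\SO_{2n}(\BC)$; however, $S_\psi$ is taken inside $\widehat{G}(\BC)$ rather than ${}^L G$, so after fixing a representative of the $\widehat{G}$-conjugacy class of $\psi$, the argument above applies without modification.
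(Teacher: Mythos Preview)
The paper does not give its own proof of this theorem; it is stated with attribution to \cite[Theorem 8.1(iii)]{GGP12} and used as a black box in the combinatorial description of the component groups. So there is no ``paper's proof'' to compare against.

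That said, your sketch is a correct outline of the standard argument and is essentially how one proves the result in \cite{GGP12}. The identifications you make are the right ones: for $\SO$ and $\Sp$ the embedding $\iota$ lands in $\Aut(M,B)$ and $\widehat{G}(\BC)=\ker(\det)$, so $S_\psi=\ker(\det|_{C_\psi})$; for $\RU$ the extra commutation with $\psi(s)$ for $s\in W_F\setminus W_E$ is exactly the condition of preserving $B$, by unwinding the $L$-group action encoded in \eqref{eq conjugate self-dual}. Your observation that $C_\psi^\circ\subseteq\ker(\det)$ (by connectedness) gives $S_\psi^\circ=C_\psi^\circ$, whence $\mathcal{S}_\psi=S_\psi/C_\psi^\circ=\mathcal{C}_\psi^+$, is the clean way to pass to component groups. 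The remark about the even orthogonal case is also apt: since $S_\psi$ is computed inside $\widehat{G}(\BC)=\SO_{2n}(\BC)$ while the image of $\psi$ may meet $\rO_{2n}(\BC)\setminus\SO_{2n}(\BC)$, one still has $S_\psi=C_\psi\cap\SO_{2n}(\BC)=\ker(\det|_{C_\psi})$, so nothing changes.

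One small point worth tightening if you were to write this out fully: in the unitary case you should make explicit that commuting with $\psi(s)$ inside the semidirect product ${}^LG=\GL(M)\rtimes\Gal(E/F)$ means $g\cdot\psi(s)=\psi(s)\cdot g$, which translates (via the Galois action defining the semidirect product and the relation $B(m_1,m_2)=\widehat{\varepsilon}\,B(m_2,\psi_{\GL}(s^2)m_1)$) to $g\in\Aut(M,B)$. This is routine but is the only place where a reader might want one more line.
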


Since $ Z(\widehat{\RG}(\BC))^{\Gamma} \subseteq \{ \pm 1\}$ holds for all classical groups we consider, we may identify
\begin{align*}
\widehat{\overline{\mathcal{S}}}_{\psi}&= \{ \varepsilon \in \widehat{C}_{\psi}\ | \ \langle \varepsilon , e_0 \rangle =1  \}\\
&=\{ \varepsilon \in \widehat{C}_{\psi}\ | \ \prod_{i \in I_{gp}} \varepsilon(\rho_i \otimes S_{a_i} \otimes S_{b_i})^{m_i} =1  \}.
\end{align*}
Finally, we evaluate $\varepsilon(s_{\psi})$ explicitly for each $\varepsilon \in \widehat{\mathcal{S}}_{\psi}$, which will be used in \S \ref{sec proof of lemma MW/W}.

\begin{align}
\begin{split}
\varepsilon(s_{\psi})&= \langle \varepsilon, e_{\psi}\rangle\\
  &=\prod_{i \in I_{gp}} \varepsilon(\rho_i\otimes S_{a_i} \otimes S_{b_i}) \ast e_{\psi}(\rho_i\otimes S_{a_i} \otimes S_{b_i})\\
   &=\prod_{i \in I_{gp}} \varepsilon(\rho_i\otimes S_{a_i} \otimes S_{b_i})^{m_i (b_i-1)}. \label{eq eva s_psi}
    \end{split}
\end{align}

\subsection{Endoscopic groups}\label{sec endoscopic}
In this subsection, we recall the definition of endoscopic group determined by a semisimple element $s \in S_{\psi}$ and recall some computation we need.

Let $\psi$ be a local Arthur parameter of $G$ and $s$ be a semisimple element in $ S_{\psi}$. There is a quasi-split reductive group $G'$ such that
\[ \widehat{G'} \cong \Cent(s ,\widehat{G})^{\circ},\]
and the isomorphism extends to 
\[ \xi: {}^L G'\to {}^L G\]
such that $\xi( {}^L G') \subseteq \textrm{Cent}(s, {}^L G)$ and $\psi$ factor through $\xi({}^L G')$. This gives a local Arthur parameter of $G'$, which we denote by $\psi'$. We say the pair $(G', \psi')$ corresponds to $(\psi, s)$ through $\xi$, and denote $(G',\psi') \to (\psi,s)$. The group $G'$ obtained in this way is called an \emph{endoscopic group} of $G$ and the triple $(G',s, \xi)$ is called an endoscopic data. We say $G'$ is elliptic (or $(G',s, \xi)$ is an elliptic endoscopic data) if $Z(\widehat{G'})^{\Gamma}$ is finite. If $G$ is a quasi-split classical group, then any elliptic endoscopic group is a product of at most two quasi-split classical groups. We shall also see this in the computation in \S \ref{sec computation of endoscopic group}.

\subsection{\texorpdfstring{$L$-parameters}{}}

An $L$-parameter $\phi$ of $G$ is a continuous homomorphism 
\[ \phi: W_F \times \SL_2(\BC)  \to {}^L G\]
with the following conditions.
\begin{enumerate}
    \item [(i)] For any $w \in W_F$, $\psi(w,1,1)$ is semisimple. 
    \item [(ii)] The projection onto $\Gal(K/F)$ is the natural map $W_F/W_K \to \Gal(K/F)$.
    \item [(iii)] The restriction to $\SL_2(\BC)$ are algebraic. 
    \item [(iv)]  $\phi$ is $G$-relevant. In other words, if $\textrm{Im}(\phi)$ is contained in some Levi subgroup ${}^L P$ of ${}^L G$, then there is a corresponding parabolic subgroup $P$ of $G$ defined over $F$. 
\end{enumerate}

For each $L$-parameter $\phi$ of $G$, we may define 
\[\phi_{\GL}: W_E \times \SL_2(\BC) \to {}^L G\]
similarly as in \S \ref{sec component group}, which is (conjugate-)self-dual.

For a local Arthur parameter $\psi$ of $G$, we may associate a homomorphism $\phi_{\psi}$ by
\[ \phi_{\psi}(w,x,y):= \psi \left(w,x, \begin{pmatrix}
    |w|^{1/2} & \\ & |w|^{-1/2}
\end{pmatrix} \right).  \]
If it is $G$-relevant, then it gives an $L$-parameter of $G$. Note that $(\phi_{\psi})_{\GL}= \phi_{\psi_{\GL}}$. We remark that it is possible that $\psi$ is $G$-relevant but $\phi_\psi$ is not $G$-relevant. For example, let $G^{\ast}$ be the split $\SO_7(F)$ and $G$ be its non-quasi-split inner form. Let $\rho$ be the trivial representation of $W_F$ and consider the local Arthur parameter $\psi$ of $G$ with 
\[ \psi_{\GL}= (\rho \otimes S_{1} \otimes S_{2})^{\oplus 3}.\]
It is $G$-relevant. However,
\[ \phi_{\psi_{\GL}}= (\rho|\cdot|^{1/2} \otimes S_1)^{\oplus 3}+(\rho|\cdot|^{-1/2} \otimes S_1)^{\oplus 3} \]
is not $G$-relevant since it factors through the Borel subgroup of ${}^L G$.

In the rest of this paper, we do not distinguish between $\phi$ and $\phi_{\GL}$, $\psi$ and $\psi_{\GL}$ by abuse of notation. This simplifies the notation when constructing parameters. For example, let $\phi_0$ be an $L$-parameter of $G(V_{an,r})$ and $\phi_1$ be an $L$-parameter of $\GL_{d}(E)$. We shall write
\[ \phi:= (\phi_1+ {}^\sigma \phi_1^{\vee} )+\phi_0,\]
which means that $\phi$ is an $L$-parameter of $G(V_{an,r+d})$ such that
\[ \phi_{\GL}= (\phi_1+ {}^\sigma \phi_1^{\vee} )+(\phi_0)_{\GL}.\]
Note that in this way we modulo the outer conjugation for special even orthogonal groups. However, this will not affect the argument whenever we use this convention.

\section{Two examples}\label{sec example}

In this section, we provide two examples on groups $\Sp_2(F)$ and $\SO_3(F)$, respectively, 
which serves as counter-examples for Lemma \ref{lem Arthur}. 

\subsection{\texorpdfstring{An example on $\Sp_2(F)$}{}}\label{example 1}
Let $\chi$ be the non-trivial unramified quadratic character of $F$. Consider the following tempered local Arthur parameter of $G=\Sp_2(F)$
\[ \phi:= \chi \otimes S_1 \otimes S_1 + \chi \otimes S_1 \otimes S_1 + 1 \otimes S_1 \otimes S_1. \]
Let $\psi:=\widehat{\phi}$. 
Then $\psi=\phi$ and the component group
$\mathcal{S}_{\psi}=\mathcal{S}_{\phi}$
has order $2$. Fixing a Whittaker datum, we write the local Arthur packet corresponding to $\psi$ as 
\[ \Pi_{\phi}= \Pi_{\psi}= \{ \pi^{+}, \pi^{-} \},\]
where $\pi^{+}$ is generic and unramified (with respect to $\Sp_2(\OO_F)$).

Let $P_0=M_0N_0$ be the Siegel parabolic subgroup of $\Sp_{2}(F)$. Regard $\chi$ as a unitary self-dual supercuspidal representation (a character) of $M_0 \cong \GL_1(F)$. Then as a representation of finite length, we have (See \cite[Proposition 4.2]{Ato22} or the proof of \cite[Proposition 2.4.3]{Art13})
\[ \textrm{Ind}_{P_0}^G (\chi) =  \pi^+  \oplus \pi^-.\]
Since both $\pi^{+}, \pi^{-}$ occurs as a subrepresentation of $\textrm{Ind}_{P_0}^G \chi$, by Frobenius reciprocity, we have
\[ \Jac_{P_0} (\pi^{+}) \geq \chi,\ \  \Jac_{P_0} (\pi^{-}) \geq \chi\ \]
in the Grothendieck group. On the other hand, we have (see \cite[\S 5]{Xu17a} for example)
\[ \Jac_{P_0} ( \Ind_{P_0}^G (\chi)  )= \chi + \chi.\]
 We conclude that 
\[ \Jac_{P_0} (\pi^+)=\chi,\ \ \Jac_{P_0}(\pi^{-})= \chi.\]

Now we compute the Aubert-Zelevinsky involution of $\pi^{+}, \pi^{-}$ from the definition \eqref{eq AZ}. There are only two standard parabolic subgroups, which are $G$ and $ P_0$. We have
\[ \{1\}=A_{G} \subseteq A_{P_0} \cong \GL_1(F), \]
where $A_{P_0}$ is the maximal split torus in the center of the Levi of $P_0$.
\begin{align*}
    D_G( \pi^{+})&=\sum_{P \supseteq P_0} (-1)^{\dim(A_{P_0}/ A_{P})} \Ind_{P}^G ( \Jac_{P} (\pi^{+}))\\
    &= -(\pi^{+})+ \Ind_{P_0}^{G} \chi\\
    &= \pi^{-},
\end{align*}
and similarly
\[ D_G(\pi^{-})= \pi^{+}.\]
Apply the setting of Lemma \ref{lem Arthur} to $\phi, \psi$ and $\pi^+$, we have
\begin{enumerate}
    \item [$\oldbullet$] $\sigma_{\pi^+}= \pi^+$.
    \item [$\oldbullet$] $\beta(\pi^+)=1$.
    \item [$\oldbullet$] $\beta(\phi)= 1$.
    \item [$\oldbullet$] $s_{\psi}$ is trivial.
\end{enumerate}
Then Lemma \ref{lem Arthur} says that 
\[\pi^+= \langle s_{\psi}, \pi \rangle \sigma_{\pi^+}= \beta(\phi) \beta(\pi^+) \widehat{\pi^+}= \pi^-,\]
which is a contradiction.

\subsection{\texorpdfstring{An example on $\SO_3(F)$}{}}\label{example 2}
Consider the tempered local Arthur parameter
\[ \phi= 1 \otimes S_2 \otimes S_1\]
of the split group $G=\SO_3(F)$. Let $\psi = \widehat{\phi}= 1 \otimes S_1 \otimes S_2.$
Then $\mathcal{S}_{\phi}=\mathcal{S}_{\psi}$ is the trivial group.

The tempered local Arthur packet $\Pi_{\phi}$ consists of a single representation $\pi_{gen}$, which is generic but not supercuspidal. The local Arthur packet $\Pi_{\psi}$ consists of a single representation $\widehat{\pi_{gen}}$, the Aubert-Zelevinsky involution of $\pi_{gen}$. We have an exact sequence (for example, see \cite[Proposition 3.4]{Tad20})
\[ 0 \to \pi_{gen} \to \Ind_{\GL_1}^{\SO_3} (|\cdot|^{1/2}) \to \widehat{\pi_{gen}} \to 0. \]
By a similar computation as in \S  \ref{example 1}, we have
\[ D_{G}(\pi_{gen})= \widehat{\pi_{gen}},\]
and hence $\beta(\pi_{gen})=1$.
On the other hand, since the image of $\phi$ is the whole $\Sp_2(\BC)$, $M_{\phi}=G$. Therefore, $ \beta(\phi)= -1.$ Then, Lemma \ref{lem Arthur} says that $\widehat{\pi_{gen}}=-(\widehat{\pi_{gen}})$. which is a contradiction.

\section{\texorpdfstring{Construction of anti-tempered local Arthur packets}{}}\label{construction of anti-tempered}
In this section, we construct anti-tempered local Arthur packets from tempered local Arthur packets for pure inner forms of classical groups in a uniform manner. We follow the strategy in \cite[\S 7.1]{Art13}, and point out two computations of certain product of signs (Lemma \ref{lem MW/W} and Proposition \ref{prop eq of sign Moeglin}), which is crucial to the strategy. We remark that when $G$ is quasi-split symplectic or special orthogonal groups, these two computations are essentially done in \cite{Moe06b, MW06}.

\subsection{Definition of signs}
In this subsection, we associate certain signs to endoscopic groups, $L$
-parameters and irreducible representations.

Let $G=G(V)$ be a classical group considered in \S \ref{sec classical groups} and $G^{\ast}$ be the quasi-split pure inner form of $G$. Let $P_0$ denote a minimal parabolic subgroup of $G$ with Levi subgroup $M_0$. For a parabolic subgroup $P$ with Levi subgroup $M$, let $A_M$ or $A_P$ denote the maximal $F$-split torus contained in the center of $M$. Let $e(G)$ denote the Kottwitz sign
\[ e(G):= (-1)^{r(G)-r(G^{\ast})},\]
where $r(G)$ is the $F$-rank of $G$. These definitions naturally generalize to endoscopic groups of $G$, which are products of classical groups and $\GL_d(E)$. 

Let $M$ be any Levi subgroup of $G=G(V_{an,\mathfrak{r}})$. We compute $A_{M}$ explicitly. Recall that $M$ is isomorphic to 
\[ \GL_{n_1}(E) \times \cdots \times \GL_{n_f}(E) \times G(V_{an, \mathfrak{r}-r}),  \]
where $ 0 \leq r =n_1 +\cdots + n_f \leq \mathfrak{r} = \Witt(V)$. Thus $A_M \cong \GL_1(E)^{\times f}.$ As a consequence, $A_{M_{0}} \cong \GL_1(E)^{\times \mathfrak{r}}$ and $\dim(A_{M_0})= \mathfrak{r} =r(G)$.

 Suppose $G'$ is an endoscopic group of $G$. Define
\[ \alpha(G, G'):= (-1)^{\dim(A_{M_0}/A_{M_0'})},\]
where $M_0'$ is the minimal Levi of $G'$. Note that $\alpha(G, G')= e(G) \alpha(G^{\ast}, G')$.

For an $L$-parameter $\phi$ of $G$, we let $M_{\phi}$ denote a minimal Levi subgroup for which the $L$-group ${}^L M_{\phi}$  contains the image of $\phi$. Then define
\[ \beta(\phi):= (-1)^{\dim( A_{M_0}/ A_{M_{\phi}})}.\]
It does not matter whether we regard $\phi$ as an $L$-parameter of $G$ or $G^{\ast}$. For a local Arthur parameter $\psi$, we define $\beta(\phi_\psi)$ by regarding $\phi_{\psi}$ as an $L$-parameter of $G^{\ast}$.

Finally, for an irreducible representation $\pi$ of $G$, let $\sigma$ be a supercuspidal representation on a Levi subgroup $M_{\pi}$ of $G$ such that $\pi$ is a subquotient of $\Ind_{M_{\pi}}^G \sigma$. Then define 
\[ \beta(\pi):= (-1)^{\dim( A_{M_0}/ A_{M_{\pi}})}.\]

The definition of $\beta(\phi)$ and $\beta(\pi)$ also works for $L$-parameters $\phi$ and irreducible representation $\pi$ of $\GL_n(E)$. We shall denote them by $\beta_{\GL}(\phi) $ and $\beta_{\GL}(\pi)$ to specify the groups in this case.

\subsection{Characterization of local Arthur packets} 
In this subsection, we recall the statement of \cite[Theorem 2.2.1, Conjecture 9.4.2]{Art13} in the setting for pure inner forms, which is formulated in \cite[Theorem* 1.6.1]{KMSW14}.

Recall that our classical group $G=G(V)$ is always associated to a quadratic space $(V,q_V)$. The quadratic space $(V,q_V)$ determines a character $\chi_{V}$ of $Z(\widehat{G})^{\Gamma} \subseteq \{\pm 1\}$. More explicitly, if $-1 \in Z(\widehat{G})^{\Gamma}$, then $\chi_V(-1):= \epsilon(V)$. Let
\[ \widehat{\mathcal{S}}_{\psi,\chi_V}:= \{ \varepsilon \in \widehat{\mathcal{S}}_{\psi} \ | \ \varepsilon(e_0)= \chi_V(-1)  \},\]
which is either $ \widehat{\overline{\mathcal{S}}}_{\psi}$ or $\widehat{\mathcal{S}}_{\psi} \setminus \widehat{\overline{\mathcal{S}}}_{\psi}$.

Let $G'$ be an endoscopic group of $G$. For each stable distribution $S$ on $G'$, we denote the endoscopic transfer of $S$ from $G'$ to $G$ by $\Tran_{G'}^G S$ as introduced in \cite{LS87}. For more details, see \cite[\S 4]{Hir04} for example. The following conjecture characterizes local Arthur packets for $G$. 

\begin{conj}[{\cite[Theorem 2.2.1, Conjecture 9.4.2]{Art13}, \cite[Theorem* 1.6.1]{KMSW14}}]\label{conj 2.2.1}
    Let $\psi$ be a local Arthur parameter of $G=G(V)$.
    \begin{enumerate}
        \item [(a)] For any endoscopic group $G'$ of $G$ and local Arthur parameter $\psi'$ of $G'$, there exists a unique stable distribution $\eta_{\psi'}$ of $G'$ that is compatible with twisted endoscopic transfers (see \S \ref{sec twisted endoscopic transfer}) and products.
        \item [(b-1)]For any $s \in S_{\psi}$ that gives $(G',\psi') \rightarrow (\psi,s)$, the endoscopic transfer $\Tran_{G'}^G(\eta_{\psi'})$ only depends on the image of $s$ in $\mathcal{S}_{\psi}.$ 
        Define $\eta_{\psi,x}:= \Tran_{G'}^G(\eta_{\psi'})$, where $x$ is the image of $s$ in $\mathcal{S}_{\psi}$.
        \item [(b-2)] For each $\varepsilon\in \widehat{\mathcal{S}}_{\psi,\chi_V}$, define distributions $\pi(\psi,\varepsilon)$ via the system of equations
        \[ \eta_{\psi,x}= e(G) \sum_{\varepsilon \in \widehat{\mathcal{S}}_{\psi,\chi_V}} \varepsilon(s_{\psi}x) \pi(\psi, \varepsilon),\]
        where $x$ ranges over $\mathcal{S}_{\psi} $. Then each distribution $\pi(\psi,\varepsilon)$ is a non-negative integral linear combination of character of irreducible representations.
    \end{enumerate}
\end{conj}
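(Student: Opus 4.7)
The plan is to establish the conjecture by reduction to the tempered case, which itself is covered by the refined local Langlands correspondence of Arthur, Mok, KMSW, and M{\oe}glin–Renard for the pure inner forms under consideration. For tempered $\psi$ one defines $\eta_\psi = \sum_{\pi \in \Pi_\psi} \Theta_\pi$ and the three parts of the conjecture amount to the endoscopic character identities already established. The novel content is the anti-tempered case $\psi = \widehat{\phi}$, which is the focus of the paper; the general case then follows by parabolic induction from a Levi on which $\psi$ is a product of twists of tempered parameters by anti-tempered ones, using the standard fact that parabolic induction commutes with endoscopic transfer (cf.\ \cite[Proposition 2.4.3]{Art13}).

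For anti-tempered $\psi = \widehat{\phi}$, I would take the candidate stable distribution to be
\[ \eta_\psi := \beta(\phi_\psi)\, D_{G}(\eta_\phi), \]
and analogously on each endoscopic group $G'$. Part (a) — stability and compatibility with twisted endoscopic transfer — reduces to the assertion that the Aubert–Zelevinsky involution commutes with stabilization up to the sign $\beta(\phi_\psi)$; this is exactly Working Hypothesis \ref{assu twisted intro} (known for symplectic and special orthogonal groups from \cite{MW06, Xu17b}). For part (b-1), given $s \in S_\psi = S_\phi$ producing $(G', \psi') \to (\psi, s)$ with $\psi' = \widehat{\phi'}$, the transfer is
\[ \Tran_{G'}^G(\eta_{\psi'}) \;=\; \beta(\phi_{\psi'})\,\Tran_{G'}^G D_{G'}(\eta_{\phi'}) \;=\; \beta(\phi_{\psi'})\, e(G)\alpha(G,G')\, D_G\bigl(\Tran_{G'}^G(\eta_{\phi'})\bigr) \]
up to the comparison of Aubert–Zelevinsky involutions under transfer. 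Lemma \ref{lem MW/W} then shows the total sign discrepancy $e(G)\alpha(G,G')\beta(\phi_\psi)\beta(\phi_{\psi'})$ factors as a character $\varepsilon^{M/MW}_\psi$ of $\overline{\mathcal{S}}_\psi$ depending only on the class of $s$, which is exactly what is needed for well-definedness of $\eta_{\psi,x}$.

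For part (b-2) one inverts the system of equations to obtain candidate virtual characters $\pi(\psi, \varepsilon)$ and must identify them with honest irreducible representations (up to sign) and establish non-negativity. The natural candidate is $\pi(\psi, \varepsilon) = \beta(\phi_\psi)\beta(\pi(\phi, \varepsilon'))\, \widehat{\pi(\phi, \varepsilon')}$ for an appropriate shift $\varepsilon = \varepsilon' \cdot \varepsilon^{M/MW}_\psi$; matching Fourier coefficients of $\eta_{\psi, x} = \beta(\phi_\psi) \varepsilon^{M/MW}_\psi(x) D_G(\eta_{\phi, x})$ against the tempered expansion of $\eta_{\phi, x}$ forces this shift. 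Proposition \ref{prop eq of sign Moeglin}, which asserts $\beta(\pi(\phi,\varepsilon))\beta(\phi_\psi)\varepsilon(s_\psi) = 1$, is then the precise identity needed to reconcile the signs $\beta(\pi(\phi,\varepsilon))$ appearing in $D_G(\Theta_{\pi(\phi,\varepsilon)}) = \beta(\pi(\phi,\varepsilon))\Theta_{\widehat{\pi(\phi,\varepsilon)}}$ with the pairing $\varepsilon(s_\psi x)$ in Conjecture \ref{conj 2.2.1}(b-2). Non-negativity and irreducibility of each $\pi(\psi, \varepsilon)$ then follow from the corresponding property of $\pi(\phi, \varepsilon')$ together with the fact that the Aubert–Zelevinsky involution sends irreducible representations to irreducible representations up to a known sign.

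The main obstacle is the sign bookkeeping encoded in Lemma \ref{lem MW/W} and Proposition \ref{prop eq of sign Moeglin}. Lemma \ref{lem MW/W} requires tracking Witt indices, discriminants, and the Kottwitz sign through the decomposition $\widehat{G'} = \Cent(s, \widehat{G})^\circ$ using the explicit combinatorics of \S \ref{sec component group}, and verifying the resulting expression factors through $\mathcal{S}_\psi$ and is trivial on $s_\psi$. Proposition \ref{prop eq of sign Moeglin} requires first establishing the identity for supercuspidal members of $\Pi_\phi$ via M{\oe}glin's explicit classification, and then propagating it through tempered $\Pi_\phi$ by induction on Jacquet modules, where the subtle point is that the character $\varepsilon^{M/MW}_\psi$ is in general non-trivial — this nontriviality is precisely what invalidates the original statement of \cite[Lemma 7.1.1]{Art13} and is illustrated by the examples in \S \ref{sec example}. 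A secondary obstacle is Working Hypothesis \ref{assu twisted intro} in the unitary case, which requires extending the $\GL_N^+$ computation of \cite{MW06} to the unitary-twisted setting.
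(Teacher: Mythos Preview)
Your outline for the anti-tempered case $\psi = \widehat{\phi}$ is essentially the paper's own argument in \S 5.3: define $\eta_{\psi'} = \beta(\phi_{\psi'}) D_{G'}(\eta_{\phi'})$, push through transfer via Hiraga's theorem, package the resulting sign discrepancy as the character $\varepsilon^{M/MW}_\psi$ (Lemma \ref{lem MW/W}), and then use Proposition \ref{prop eq of sign Moeglin} to kill the residual sign $\gamma(\varepsilon)$. So for that part you are aligned with the paper. (One small correction: Hiraga's identity $D_G \circ \Tran_{G'}^G = \alpha(G,G')\, \Tran_{G'}^G \circ D_{G'}$ carries no $e(G)$; in the paper the $e(G)$ enters only when matching against the defining expansion in (b-2), since $e(G)\alpha(G,G') = \alpha(G^\ast,G')$.)

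The genuine gap is your claim that ``the general case then follows by parabolic induction from a Levi on which $\psi$ is a product of twists of tempered parameters by anti-tempered ones.'' This is false as stated, and the paper makes no such claim. A general local Arthur parameter need not decompose in this way: a summand $\rho \otimes S_a \otimes S_b$ with $a,b>1$ is neither tempered nor anti-tempered, and cannot be realized as the induction of a tempered-times-anti-tempered parameter from any Levi. Constructing packets for such parameters is precisely the hard part of M\oe glin's and Xu's programs and is well outside the scope of the tempered/anti-tempered dichotomy. Accordingly, the paper treats Conjecture \ref{conj 2.2.1} as a conjecture: it records the tempered case as Theorem \ref{thm tempered} (via citation), reduces part (a) in general to the tempered case (Proposition \ref{prop red 2.2.1(a) to tempered}), and establishes the anti-tempered case as Theorem \ref{thm main}. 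Your proposal should not claim more than that.
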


In the following remark, we explain how to define the local Arthur packet $\Pi_{\psi}(G(V))$ and the map $\Pi_{\psi}(G(V)) \to \widehat{\mathcal{S}}_{\psi,\chi_V}$ from Conjecture \ref{conj 2.2.1}.
\begin{remark}\label{rmk obtain packets}
    We do not distinguish an irreducible representation and its character in the following discussion. By Conjecture \ref{conj 2.2.1}(b-2), we may write
\[ \pi(\psi, \varepsilon)= \pi_{\varepsilon,1}+\cdots + \pi_{\varepsilon, l_{\varepsilon}}\]
for each $\varepsilon\in \widehat{\mathcal{S}}_{\psi,\chi_V}$, where $\pi_{\varepsilon,i}$ is irreducible, but $\pi_{\varepsilon,i}$, $\pi_{\varepsilon,j}$ are not necessarily distinct and $l_{\varepsilon}$ can be zero. The local Arthur packet $\Pi_{\psi}(G(V))$ is defined to be the multi-set 
\[ \Pi_{\psi}(G(V)) := \bigsqcup_{\varepsilon\in \widehat{\mathcal{S}}_{\psi,\chi_V}} \{ \pi_{\varepsilon,1},\dots, \pi_{\varepsilon, l_{\varepsilon}} \},\]
equipped with a mapping
\begin{align*}
    \Pi_{\psi}(G(V)) &\longrightarrow \widehat{\mathcal{S}}_{\psi,\chi_V},\\
    \pi_{\varepsilon, r} & \longmapsto \varepsilon.
\end{align*}
\end{remark}

The above conjecture is proved when $\psi$ is tempered. See \cite{Art13, Mok15, KMSW14, MR18, Ish23}. For special even orthogonal groups, we modulo the outer conjugation.

\begin{thm}\label{thm tempered}
    Conjecture \ref{conj 2.2.1} holds for any tempered local Arthur parameter $\phi$ of pure inner forms of classical groups $G(V)$. Moreover, for any $\varepsilon \in \widehat{\mathcal{S}}_{\phi,\chi_V}$, the distribution $\pi(\phi,\varepsilon)$ is a character of an irreducible representation.
\end{thm}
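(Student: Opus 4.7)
The statement is essentially the tempered specialization of Arthur's local classification, and the plan is to assemble it from the existing literature case-by-case across the four families in \S \ref{sec classical groups}. First, for quasi-split symplectic and odd special orthogonal groups, as well as for even special orthogonal groups (modulo outer conjugation), part (a) and the endoscopic character identities in (b-1), (b-2) are the content of Arthur's local classification theorem \cite[Theorem 2.2.1]{Art13}, restricted to the case where $\psi=\phi$ is tempered. For quasi-split unitary groups the corresponding statements are the main local results of Mok \cite[Theorem 3.2.1]{Mok15}. For pure inner forms of unitary groups, the extension is due to Kaletha--Mínguez--Shin--White \cite[Theorem* 1.6.1]{KMSW14}, and for pure inner forms of even special orthogonal and unitary groups the missing pieces are supplied by M{\oe}glin--Renard \cite{MR18}; the recent work of Ishimoto \cite{Ish23} handles the remaining non-quasi-split cases that complete the statement for all pure inner forms in our list.

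The plan therefore reduces to verifying that these cited results are formulated (or can be directly translated) into the precise form used in Conjecture \ref{conj 2.2.1}. For (a), the existence and uniqueness of the stable tempered distribution $\eta_{\phi'}$ on each endoscopic group $G'$ is a consequence of the stabilization of the (twisted) trace formula together with the orthogonality relations for tempered characters; the compatibility with products reduces to the general linear group factors, where $\eta_{\phi'}$ is the character of the irreducible representation attached to $\phi'$ via Harris--Taylor/Henniart. For (b-1), the dependence of $\Tran_{G'}^{G}(\eta_{\phi'})$ only on the image of $s$ in $\mathcal{S}_\phi$ is immediate once the character identities (b-2) are known, since conjugate semisimple elements produce the same endoscopic transfer.

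The heart of the matter is (b-2): the system of endoscopic character relations
\[
\eta_{\phi,x}=e(G)\sum_{\varepsilon\in\widehat{\mathcal{S}}_{\phi,\chi_V}}\varepsilon(s_\phi x)\,\pi(\phi,\varepsilon)
\]
is exactly the defining identity for the tempered $L$-packet $\Pi_\phi(G(V))$ in each of the cited works, and in the tempered setting $s_\phi=1$, so it collapses to the standard endoscopic character relation. Inversion by Fourier analysis on the finite abelian $2$-group $\mathcal{S}_\phi$ expresses each $\pi(\phi,\varepsilon)$ as a virtual character; its positivity and irreducibility in the tempered case are precisely the content of the bijection $\Pi_{\phi}(G(V))\longleftrightarrow \widehat{\mathcal{S}}_{\phi,\chi_V}$ proved in the references above.

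There is no essentially new obstacle in the tempered case: the entire difficulty of Conjecture \ref{conj 2.2.1} lies in the non-tempered setting, which is what the rest of this note addresses. The only bookkeeping required is to match the normalization conventions (Whittaker data, Kottwitz sign $e(G)$, and the identification of $\widehat{\mathcal{S}}_{\phi,\chi_V}$ with the set of relevant characters depending on the inner form via $\chi_V$) between Arthur's, Mok's, and the KMSW/M{\oe}glin--Renard/Ishimoto formulations; this is routine and is already done in \cite[\S 1.6]{KMSW14} and \cite{MR18}.
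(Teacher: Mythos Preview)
Your proposal is correct and follows essentially the same approach as the paper: the paper gives no proof beyond the sentence ``The above conjecture is proved when $\psi$ is tempered. See \cite{Art13, Mok15, KMSW14, MR18, Ish23}. For special even orthogonal groups, we modulo the outer conjugation,'' and your write-up is simply a more detailed unpacking of exactly these citations, matching each family and each part of Conjecture~\ref{conj 2.2.1} to the relevant reference.
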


\subsection{Construction of anti-tempered packets}
In this subsection, we prove Conjecture \ref{conj 2.2.1} for anti-tempered local Arthur parameters for pure inner form of classical groups. The construction reduces to compute certain product of signs associated to parameters (Lemma \ref{lem MW/W}) and tempered representations (Proposition \ref{prop eq of sign Moeglin}), whose proof will be given in the later sections.

Before the construction, we recall several important results. First, the results of Arthur and Mok show that Theorem \ref{thm tempered} implies that Conjecture \ref{conj 2.2.1}(a) holds for $G$. 

\begin{prop}[{\cite[Lemma 2.2.2]{Art13}, \cite[Proposition 8.2.1]{Mok15} }]\label{prop red 2.2.1(a) to tempered}
Let $G'$ be an endoscopic group of $G$. If Conjecture \ref{conj 2.2.1}(a) holds for any tempered local Arthur parameter of $G'$, then it holds for any general local Arthur parameter of $G'$.    
\end{prop}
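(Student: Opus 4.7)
The plan is to reduce the existence (and uniqueness) of the stable distribution $\eta_{\psi'}$ for a general local Arthur parameter $\psi'$ of $G'$ to the tempered case by means of parabolic induction, exploiting the Langlands-type decomposition packaged inside the Arthur $\SL_2$ factor. Throughout I may replace $G'$ by its components since compatibility with products in Conjecture \ref{conj 2.2.1}(a) lets one treat each factor (classical group or $\GL_d(E)$) separately; this reduces the problem to the case of a single classical group or general linear group.

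Given a general $\psi'$, my first step is to produce a standard Levi $M = \prod_j \GL_{n_j}(E) \times G_0$ of $G'$ together with a tempered Arthur parameter $\psi_M$ of $M$ out of which $\psi'$ can be reconstructed. Concretely, I would decompose $\psi'_{\GL} = \bigoplus_i (\rho_i \otimes S_{a_i} \otimes S_{b_i})^{\oplus m_i}$ and separate those summands with $b_i > 1$ from the tempered ones ($b_i = 1$). Each non-tempered summand $\rho_i \otimes S_{a_i} \otimes S_{b_i}$ is self-dual (with the correct sign), and I would formally split off a pair $\tau_i \oplus {}^{\sigma}\tau_i^{\vee}$ where $\tau_i := \rho_i |\cdot|^{(b_i-1)/2} \otimes S_{a_i b_i}$-type data, placed on a $\GL$ factor of $M$, while the remaining self-dual pieces with $b_i = 1$ give a tempered Arthur parameter $\psi_0$ of a smaller classical group $G_0$. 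On $M$, the datum $\psi_M = (\tau_1, \ldots, \tau_f; \psi_0)$ (with the $\tau_j$'s absorbed onto the $\GL$ factors and $\psi_0$ on $G_0$) is tempered, and the original $\psi'$ is recovered from $\psi_M$ via the embedding ${}^LM \hookrightarrow {}^LG'$ combined with the Arthur $\SL_2$. Dually, $\phi_{\psi'}$ becomes the induced L-parameter from $\phi_{\psi_M}$, matching the Langlands decomposition.

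Having produced $M$ and $\psi_M$, I would define
\[
   \eta_{\psi'} := \mathrm{Ind}_{P}^{G'}\bigl(\eta_{\psi_M}\bigr),
\]
where $P = MN$ is the associated standard parabolic and $\eta_{\psi_M}$ exists by the tempered hypothesis together with the analogous construction on $\GL$ factors. For the $\GL$ blocks, one uses that on general linear groups every local Arthur parameter corresponds to a Speh-type stable (in fact invariant) distribution, built by the same parabolic induction from tempered data; this is standard and unconditional. The verification that $\eta_{\psi'}$ has the required properties proceeds in three steps: stability of $\eta_{\psi'}$ follows from the fact that parabolic induction of a stable distribution on a Levi is stable on $G'$ (a formal consequence of the Weyl integration formula and the Langlands-Shelstad transfer identity); compatibility with products reduces to the analogous statement for $M$; and compatibility with twisted endoscopic transfers is obtained by combining the assumed compatibility on the tempered level with the commutation of parabolic induction and twisted endoscopic transfer, together with the parallel Levi decomposition of the relevant endoscopic data. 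Uniqueness of $\eta_{\psi'}$ then follows because the stable distribution is pinned down by its character values at regular semisimple elements, and these are determined by the transfer identities reduced to $M$.

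The main obstacle I expect is the careful bookkeeping in verifying that the induction formula commutes with twisted endoscopic transfer in a way that respects the Arthur $\SL_2$ structure; in particular, one has to check that when a semisimple element $s \in S_{\psi'}$ is transferred to a Levi endoscopic datum via the decomposition above, the resulting endoscopic Arthur parameter on that Levi matches the Levi decomposition of the original endoscopic Arthur parameter on $G'$. A secondary technical point is independence of the construction from the choice of how to split off the non-tempered summands when a non-tempered component has multiplicity greater than one, which is resolved by noting that any two such splittings differ by a Weyl element stabilizing the Levi, so the induced stable distributions agree.
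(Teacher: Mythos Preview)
The paper does not supply its own proof of this proposition; it merely cites \cite[Lemma 2.2.2]{Art13} and \cite[Proposition 8.2.1]{Mok15} and moves on. So there is nothing in the paper to compare your argument against, but your proposal does contain a genuine gap in the decomposition step.

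Your Levi decomposition of $\psi'$ is not well-defined. You propose to separate the summands with $b_i>1$ from those with $b_i=1$ and to place each non-tempered summand $\rho_i\otimes S_{a_i}\otimes S_{b_i}$ on a $\GL$ factor of $M$ as ``$\tau_i\oplus{}^{\sigma}\tau_i^{\vee}$''. But a summand lying in $I_{gp}$ with odd multiplicity is an irreducible (conjugate-)self-dual representation and simply cannot be written in the form $\tau+{}^{\sigma}\tau^{\vee}$; the $\GL$ factors of a Levi of $G'$ always contribute such dual pairs, so $\psi'$ does not factor through any Levi stripping off such a summand. Consequently your $\psi_M$ does not exist as a parameter of $M$, and the expression ``$\rho_i|\cdot|^{(b_i-1)/2}\otimes S_{a_ib_i}$-type data'' has no meaning here. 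Even under a charitable reading (taking only the top exponent of $\phi_{\psi'}$), your description places only the $b_i=1$ pieces on $G_0$, whereas the middle exponent of every summand with $b_i$ odd must also land there.

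The argument Arthur actually gives does not attempt to factor $\psi'$ through a Levi at all; it factors the associated $L$-parameter $\phi_{\psi'}$. One takes $M=M_{\psi'}$ minimal so that $\phi_{\psi'}$ factors through ${}^LM$; on $M$ this yields a genuinely \emph{tempered} $L$-parameter $\phi_M$ (hence a tempered Arthur parameter) together with an unramified twist $\lambda$ recording the half-integer exponents $(b_i-1)/2-k$. One then sets $\eta_{\psi'}:=\Ind_P^{G'}(\eta_{\phi_M}\otimes\lambda)$ and verifies the twisted endoscopic identity by analytic continuation in $\lambda$ from the unitary locus, where the tempered hypothesis applies. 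In this picture each $\rho_i\otimes S_{a_i}\otimes S_{b_i}$ is spread across $\lfloor b_i/2\rfloor$ separate $\GL$-blocks (each carrying the discrete series datum $\rho_i\otimes S_{a_i}$) plus, when $b_i$ is odd, a contribution $\rho_i\otimes S_{a_i}$ to the classical factor $G_0$; it is never placed wholesale on a single $\GL$ factor.
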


 Recall that we let $D_G$ to denote the Aubert-Zelevinsky duality operator on the Grothendieck group of irreducible representations of $G$, which can also be regarded as an operator on the invariant distributions of $G$. Hiraga showed that Aubert-Zelevinsky duality operator is compatible with endoscopic transfer in the following sense.

\begin{thm}[{\cite[Theorem 1.5]{Hir04}}]\label{thm Hiraga}
    Suppose $G'$ is an endoscopic group of $G$ and $S'$ is a stable distribution on $G'$. Then we have
    \[ D_{G}\circ \Tran_{G'}^G (S')= \alpha(G,G') \Tran_{G'}^G \circ D_{G'}(S').\]
\end{thm}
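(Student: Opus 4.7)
The plan is to expand both sides via the alternating-sum formula
\[
D_G = \sum_{P \supseteq P_0} (-1)^{\dim(A_{P_0}/A_P)}\, \Ind_P^G \circ \Jac_P,
\]
now interpreted on the space of invariant distributions on $G$, and likewise for $D_{G'}$. The identity will then follow from two well-known compatibilities between endoscopic transfer and parabolic operations, plus some sign bookkeeping.

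First, I would invoke the compatibility of endoscopic transfer with parabolic induction (due to Waldspurger) and with the Jacquet functor (via descent of transfer factors of Langlands and Shelstad): whenever a standard parabolic $P = MN$ of $G$ is matched to a standard parabolic $P' = M'N'$ of $G'$ under the endoscopic correspondence, so that $M'$ is the endoscopic group of $M$ cut out by the same semisimple element $s$, one has
\[
\Ind_P^G \circ \Tran_{M'}^M = \Tran_{G'}^G \circ \Ind_{P'}^{G'},
\qquad
\Jac_P \circ \Tran_{G'}^G = \Tran_{M'}^M \circ \Jac_{P'}.
\]
Substituting these into the expansion of $\Tran_{G'}^G \circ D_{G'}(S')$, each term is rewritten in terms of $\Ind_P^G \circ \Jac_P$ applied to $\Tran_{G'}^G(S')$, and can then be compared with the corresponding term in the expansion of $D_G \circ \Tran_{G'}^G(S')$.

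The remaining work is combinatorial. There is a natural bijection between the standard parabolics $P' \supseteq P_0'$ of $G'$ and those standard parabolics $P \supseteq P_0$ of $G$ whose Levi subgroup contains $\xi(\widehat{M'})$; parabolics of $G$ outside this image contribute zero after transfer because stability of $S'$ forces the transferred Jacquet module to be supported only along descent-compatible parabolics. Under this bijection the ranks satisfy $\dim(A_P) - \dim(A_{P'}) = \dim(A_{M_0}) - \dim(A_{M_0'})$, a quantity independent of $P$, so the sign discrepancies in the two alternating sums collapse to the single factor $(-1)^{\dim(A_{M_0}/A_{M_0'})} = \alpha(G, G')$, which factors out of the sum and yields the stated identity.

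I expect the main obstacle to be establishing the Jacquet-side compatibility $\Jac_P \circ \Tran_{G'}^G = \Tran_{M'}^M \circ \Jac_{P'}$ as a genuine identity of invariant distributions, rather than merely of characters on regular semisimple elements. This step requires careful handling of Langlands-Shelstad transfer factors under descent to a Levi subgroup, and verification that the modulus-character normalizations on opposite unipotent radicals are consistent on both sides. Once this identity is secured, the alternating-sum argument is essentially formal, and the parabolic-rank sign tracking leading to $\alpha(G, G')$ is straightforward from the Levi correspondence.
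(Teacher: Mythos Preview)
The paper does not supply its own proof of this statement; it is quoted from Hiraga \cite{Hir04} and used as a black box in \S\ref{construction of anti-tempered}. There is therefore no in-paper argument to compare your proposal against.

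That said, your outline is broadly Hiraga's strategy: expand $D_G$ and $D_{G'}$ as alternating sums over standard parabolics and invoke the compatibility of endoscopic transfer with parabolic induction and with the constant-term map. You have also correctly located the substantive analytic input, namely the Jacquet-side identity coming from Langlands--Shelstad descent of transfer factors to Levi subgroups.

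The step that would not go through as written is the parabolic bookkeeping. The assignment from standard Levis $M'$ of $G'$ to standard Levis $M$ of $G$ is not in general a bijection onto a subset, and your assertion that parabolics of $G$ outside the image ``contribute zero after transfer'' is unjustified: for \emph{every} Levi $M$ of $G$, descent of the endoscopic datum produces an endoscopic group of $M$, and $\Jac_P(\Tran_{G'}^G S')$ is computed via transfer from that group whether or not it happens to be a Levi of $G'$. There is no stability mechanism that kills these terms. Hiraga's actual argument replaces this shortcut by a uniform descent identity valid for all $P$, followed by a reindexing over the poset of standard Levis; the global sign $\alpha(G,G')$ then drops out, as you correctly anticipate, from the constant offset $\dim A_{M_0}-\dim A_{M_0'}$. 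So the skeleton of your plan and your identification of the hard step are right, but the ``vanishing outside the image'' claim is a genuine gap that must be replaced by the full combinatorial argument.
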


  Xu generalized Hiraga's argument and showed the compatibility of Aubert-Zelevinsky duality operator with twisted endoscopic transfer (\cite[(A.1)]{Xu17b}). Combining with an explicit computation of the disconnected version of $D_G$ on the disconnected $\GL_N(F)$ due to M{\oe}glin and Waldspurger for symplectic and quasi-split special orthogonal groups (see \cite[Corollary 6.13]{Xu17b} and also \cite[\S 3]{MW06}), we have
\begin{align}\label{eq twisted D_G}
    D_{G}(\eta_\phi)= \beta(\phi_{\psi}) \eta_\psi,
\end{align}
 for any tempered local Arthur parameter $\phi$ and $\psi= \widehat{\phi}$. The same argument works for unitary groups.
 For completeness, following \cite{MW06, Xu17b}, we provide 
 in \S \ref{sec working hypothesis} a uniform proof for quasi-split classical groups. We summarize the statement as in the following proposition. 

\begin{prop}\label{prop twisted}
    Let $G'$ be an endoscopic group of a quasi-split classical group $G$ and $\phi'$ be a tempered local Arthur parameter of $G'$. Set $\psi':= \widehat{\phi'}$. The following identity of stable distribution holds:
    \[D_{G'}(\eta_{\phi'})= \beta(\phi_{\psi'}) \eta_{\psi'}.\]
\end{prop}

Now we start the construction. Fix an anti-tempered local Arthur parameter $\psi$  of $G=G(V)$ and let $\phi:= \widehat{\psi}$.  Recall that there are natural identifications between the centralizers and component groups of $\psi$ and $\phi$. Suppose $(G',\psi') \to (\psi,s)$ and $(G',\phi') \to (\phi,s)$ for some $s \in S_{\psi}$. We can compute $\Tran_{G'}^G(\eta_{\psi'})$ from $\Tran_{G'}^G(\eta_{\phi'})$ by Theorem \ref{thm Hiraga} and Proposition \ref{prop twisted} as follows.
\begin{align*}
    \Tran_{G'}^G (\eta_{\psi'})&= \Tran_{G'}^G(\beta(\phi_{\psi'}) D_{G'}(\eta_{\phi'}) )\\
    &=\beta(\phi_{\psi'})  \alpha(G,G')D_{G}\circ \Tran_{G'}^G(\eta_{\phi'}).
\end{align*}

By Theorem \ref{thm tempered}, $D_{G}\circ \Tran_{G'}^G(\eta_{\phi'})$ only depends on the image of $s$ in $\mathcal{S}_{\phi} \cong\mathcal{S}_{\psi}$. Thus, $ \Tran_{G'}^G (\eta_{\psi'})$ only depends on the image of $s$ in $ \mathcal{S}_{\psi}$ if and only if so does the product of signs $\beta(\phi_{\psi'})  \alpha(G,G')$. Indeed, we have the following lemma for classical group $G$.

\begin{lemma}\label{lem MW/W}
    Let $G$ be a pure inner form of classical groups and let $\psi$ be an anti-tempered local Arthur parameter of $G$. Suppose $(G',\psi') \to (\psi,s)$. Then the product of signs 
    \[ e(G)\alpha(G,G') \beta(\phi_{\psi}) \beta(\phi_{\psi'})\]
    only depends on the image of $s$ in $\mathcal{S}_{\psi}$. Moreover, it is a character of $\overline{\mathcal{S}}_{\psi}$ trivial on $s_{\psi}$.
\end{lemma}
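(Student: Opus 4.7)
The approach is to collapse the product of four signs to a single parity invariant and then compute it explicitly using the combinatorial description of $\psi$ from \S\ref{sec component group}.

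First, unwinding the definitions, the exponent of the product is
\[ [r(G) - r(G^\ast)] + [r(G) - r(G')] + [r(G^\ast) - \dim A_{M_{\phi_\psi}}] + [r(G') - \dim A_{M_{\phi_{\psi'}}}] = 2r(G) - \dim A_{M_{\phi_\psi}} - \dim A_{M_{\phi_{\psi'}}}. \]
Hence modulo $2$ the product equals $(-1)^{\dim A_{M_{\phi_\psi}} + \dim A_{M_{\phi_{\psi'}}}}$, absorbing the pure-inner-form dependence via $2r(G)$. Because the split center of every classical group is $F$-anisotropic, any Levi $M = \prod_k \GL_{n_k}(E) \times G_0$ satisfies $\dim A_M = f$, the number of $\GL$-factors in $M$. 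Thus the task reduces to computing the parity of $f(\phi_\psi) + f(\phi_{\psi'_+}) + f(\phi_{\psi'_-})$, where a semisimple $s \in S_\psi$ gives splittings $m_i = m_i^+ + m_i^-$ (with $m_i^\pm$ even for $i \in I_{bp}$ by the symplectic centralizer condition) and produces $G' = G'_+ \times G'_-$ with $\psi' = \psi'_+ \times \psi'_-$.

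Next I would compute $f(\phi_\psi)$ directly. For anti-tempered $\psi = \bigoplus_i (\rho_i \otimes S_1 \otimes S_{b_i})^{\oplus m_i}$, the $L$-parameter $\phi_\psi$ consists of segments $\rho_i|\cdot|^{(b_i - 1 - 2k)/2}$ with multiplicity $m_i$. Building the minimal Levi by pairing $\rho_i|\cdot|^c$ with $\rho_i^{\vee,\sigma}|\cdot|^{-c}$ for $c > 0$, and pairing within the $c=0$ middle pieces (present only when $b_i$ is odd), yields the block contributions
\[ f_i = \begin{cases} m_i b_i / 2 & i \in I_{gp} \text{ with } b_i \text{ even, or } i \in I_{bp}, \\ m_i (b_i - 1)/2 + \lfloor m_i/2 \rfloor & i \in I_{gp} \text{ with } b_i \text{ odd}, \\ m_i b_i & i \in I_{nsd}. \end{cases} \]
Summing $f(\phi_\psi) + f(\phi_{\psi'_+}) + f(\phi_{\psi'_-})$, every block outside $I_{gp}$ with $b_i$ odd collapses to $m_i b_i$ (or $2 m_i b_i$ for $I_{nsd}$) with either $b_i$ or $m_i$ even, hence vanishes modulo $2$. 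For $i \in I_{gp}$ with $b_i$ odd, the identity $\lfloor (a+b)/2 \rfloor = \lfloor a/2 \rfloor + \lfloor b/2 \rfloor + \mathbf{1}[a, b \text{ both odd}]$ applied to $m_i = m_i^+ + m_i^-$ gives $\lfloor m_i/2 \rfloor + \lfloor m_i^+/2 \rfloor + \lfloor m_i^-/2 \rfloor \equiv \mathbf{1}[m_i^+, m_i^- \text{ both odd}] \pmod 2$, which is the only nontrivial per-block contribution.

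Finally, since ``$m_i^+, m_i^-$ both odd'' is equivalent to ``$m_i$ even and $m_i^-$ odd'' (as $m_i^+ = m_i - m_i^-$), the total sign equals
\[ \prod_{\substack{i \in I_{gp} \\ b_i \text{ odd},\ m_i \text{ even}}} (-1)^{m_i^-}, \]
which depends only on $(m_i^- \bmod 2)_{i \in I_{gp}}$, i.e.\ on the image $e_s \in \mathcal{S}_\psi$, proving the first assertion; it is manifestly a character of $\mathcal{S}_\psi$, defining the desired $\varepsilon^{M/MW}_\psi$. Triviality on $e_0$ (where $e_0(i) = (-1)^{m_i}$) follows because every factor in the product has $m_i$ even, so the character descends to $\overline{\mathcal{S}}_\psi$; triviality on $s_\psi$ (where $s_\psi(i) = (-1)^{(b_i - 1) m_i}$) follows because every factor has $b_i$ odd. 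The main obstacle is the block-by-block case analysis ensuring that only good-parity odd-$b_i$ blocks survive modulo $2$, which relies crucially on the parity constraints imposed by the good/bad/non-self-dual decomposition of $\psi_{\GL}$.
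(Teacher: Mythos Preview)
Your overall strategy matches the paper's: collapse the four signs to the parity of $\dim A_{M_{\phi_\psi}} + \dim A_{M_{\phi_{\psi'}}}$, then count $\GL$-factors in the minimal Levi. However, there is a genuine error in your per-block count of these factors.

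When you write $f_i = m_i(b_i-1)/2 + \lfloor m_i/2 \rfloor$ for $i \in I_{gp}$ with $b_i$ odd, you are pairing the zero-exponent copies $\rho_i^{\oplus m_i}$ \emph{within each block $i$ separately}. But the minimal Levi $M_{\phi_\psi}$ pools together all zero-exponent copies of a given $\rho$: if several indices $i \in I_{gp}$ share the same $\rho_i = \rho$ (with different odd $b_i$), their middle pieces combine into $\rho^{\oplus m_\rho}$ with $m_\rho = \sum_{i:\,\rho_i = \rho} m_i$, contributing $\lfloor m_\rho/2 \rfloor$ factors rather than $\sum_i \lfloor m_i/2 \rfloor$. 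Since these differ mod $2$ whenever at least two of the $m_i$ are odd, your parity count is wrong. The paper's computation in \S\ref{sec proof of lemma MW/W} groups by $\rho$ precisely for this reason, obtaining $f_\rho = \lfloor m_\rho/2 \rfloor + \sum_{i \in I_{gp,\rho}} m_i(b_i-1)/2$ for $\rho \in R_1$.

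Concretely, take $\psi = \rho \otimes S_1 \otimes S_1 + \rho \otimes S_1 \otimes S_3$ with $m_1 = m_2 = 1$. Your formula gives the trivial character (both $m_i$ are odd, so the product is empty). But $\phi_\psi = \rho^{\oplus 2} + \rho|\cdot|^1 + \rho|\cdot|^{-1}$ has $f(\phi_\psi) = 2$; for $s$ with $(m_1^-, m_2^-) = (1,0)$ one computes $f(\phi_{\psi^+}) + f(\phi_{\psi^-}) = 1 + 0$, so the product of signs is $-1$, not $+1$. The correct character is $\varepsilon(\rho_i \otimes S_1 \otimes S_{b_i}) = (-1)^{b_i(m_{\rho_i} - 1)}$, i.e.\ your condition ``$m_i$ even'' must be replaced by ``$m_{\rho_i}$ even.''

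A secondary, smaller gap: your decomposition $m_i = m_i^+ + m_i^-$ implicitly assumes $s$ has only eigenvalues $\pm 1$. For general semisimple $s \in S_\psi$ the endoscopic group $G'$ acquires extra $\GL_{r_\lambda}(E)$ factors from eigenvalues $\lambda \notin \{\pm 1\}$; you need the reduction to the elliptic case (as in the paper) to see that these contribute equally to $f(\phi_\psi)$ and $f(\phi_{\psi'})$ and hence cancel.
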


We prove this lemma in \S \ref{sec MW/W}. In \S \ref{sec match}, we verify that for symplectic and special orthogonal groups, the character given in Lemma \ref{lem MW/W} matches the character $\varepsilon_{\psi}^{M/MW}$ defined in \cite[Definition 8.1]{Xu17b} for anti-tempered $\psi$. 
From now on, we shall write $\varepsilon_{\psi}^{M/MW}$ for the character in Lemma \ref{lem MW/W}.

Here, we continue the discussion assuming Lemma \ref{lem MW/W}. Let 
\[ \eta_{\psi,x}:=\Tran_{G'}^G (\eta_{\psi'})=  e(G)\varepsilon_{\psi}^{M/MW}(x) \beta(\phi_{\psi}) D_G\circ \Tran_{G'}^G(\eta_{\phi'}),\]
where $x$ is the image of $s$ in $\mathcal{S}_{\psi}$. Expand $\Tran_{G'}^G (\eta_{\phi'})= \eta_{\phi,x}$, we obtain
\begin{align*}
    \eta_{\psi,x}&=e(G) \varepsilon_{\psi}^{M/MW}(x) \beta(\phi_{\psi}) \sum_{\varepsilon \in \widehat{\mathcal{S}}_{\psi,\chi_V}} \varepsilon(x) D_G(\pi(\phi, \varepsilon)) \\
    &=e(G)\sum_{\varepsilon \in \widehat{\mathcal{S}}_{\psi,\chi_V}}  
   \varepsilon\varepsilon_{\psi}^{M/MW}(x) \beta(\phi_{\psi}) \beta(\pi(\phi, \varepsilon))  \widehat{\pi(\phi, \varepsilon)}\\
   &=e(G)\sum_{\varepsilon \in \widehat{\mathcal{S}}_{\psi,\chi_V}}  
   \varepsilon\varepsilon_{\psi}^{M/MW}(s_{\psi} x)  \cdot (\varepsilon(s_{\psi})\beta(\phi_{\psi}) \beta(\pi(\phi, \varepsilon)) ) \cdot\widehat{\pi(\phi, \varepsilon)}\\
   &= e(G) \sum_{\varepsilon \in \widehat{\mathcal{S}}_{\psi,\chi_V}}  
   \varepsilon\varepsilon_{\psi}^{M/MW}(s_{\psi} x)  \left(\gamma(\varepsilon)\widehat{\pi(\phi, \varepsilon)}\right),
\end{align*}
where we set $\gamma(\varepsilon):= \varepsilon(s_{\psi})\beta(\phi_{\psi}) \beta(\pi(\phi, \varepsilon))$ for short. Since $\varepsilon_{\psi}^{MW/W}(-1)=1$ if $-1 \in S_{\psi}$, multiplying by $\varepsilon_{\psi}^{MW/W}$ keeps $\widehat{\mathcal{S}}_{\psi, \chi_V}$ invariant. Thus, we may rewrite
\begin{align*}
    \eta_{\psi,x}
   = e(G) \sum_{\varepsilon \in \widehat{\mathcal{S}}_{\psi,\chi_V}}  
   \varepsilon(s_{\psi} x)  \left(\gamma(\varepsilon \varepsilon_{\psi}^{M/MW})\widehat{\pi(\phi, \varepsilon\varepsilon_{\psi}^{M/MW})}\right).
\end{align*}
Since this holds for any $x \in \mathcal{S}_{\psi}$, comparing with Conjecture \ref{conj 2.2.1}(b-2), 
we obtain 
$$\pi(\psi, \varepsilon\varepsilon_{\psi}^{M/MW})= \gamma(\varepsilon )  \widehat{\pi(\phi, \varepsilon)} $$
as distributions. Therefore, Conjecture \ref{conj 2.2.1}(b-2) holds for $\psi$ if and only if $\gamma(\varepsilon)=1$ for any $\varepsilon \in \widehat{\mathcal{S}}_{\psi,\chi_V}$, which leads us to the following proposition.

\begin{prop}\label{prop eq of sign Moeglin}
    Suppose $\phi$ is a tempered local Arthur parameter of a pure inner form of classical groups. Then for any $\varepsilon \in \widehat{\mathcal{S}}_{\psi,\chi_V}$, we have
    \[\varepsilon(s_{\psi})\beta(\phi_{\psi}) \beta(\pi(\phi, \varepsilon))=1. \]
\end{prop}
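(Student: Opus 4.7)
The plan is to establish the identity in two steps, as indicated in the introduction: first for supercuspidal $\pi(\phi,\varepsilon)$, and then for general tempered members of $\Pi_\phi$ by induction on the Jacquet module.

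\emph{Step 1 (supercuspidal base case).} When $\pi = \pi(\phi,\varepsilon)$ is supercuspidal one has $M_\pi = G$, so $\beta(\pi) = (-1)^{r(G)}$ is determined by the Witt index of $V$ alone. The parameter $\phi_\psi$ is obtained by substituting the hyperbolic element $\mathrm{diag}(|w|^{1/2}, |w|^{-1/2})$ for the Arthur $\SL_2$ in $\psi = \widehat{\phi}$, and its decomposition as a direct sum of twists of the $\rho_i$'s by one-dimensional characters of $W_F$ lets one read off the minimal Levi $M_{\phi_\psi}$ and hence $\beta(\phi_\psi)$ explicitly in terms of the data $(\rho_i, a_i, m_i)$. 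By M{\oe}glin's classification of supercuspidal members of tempered packets (\cite[Theorem 2.5.1]{Moe11}, extended in \cite[Theorem 3.4]{MR18}), the pair $(\phi, \varepsilon)$ must satisfy precise combinatorial conditions on the Jordan blocks of $\phi$ and on the alternating behaviour of $\varepsilon$. Substituting these conditions into the explicit formula $\varepsilon(s_\psi) = \prod_{i \in I_{gp}} \varepsilon(\rho_i \otimes S_{a_i})^{m_i(a_i-1)}$ given by \eqref{eq eva s_psi} (applied to $\psi$, in which the roles of $a_i$ and $b_i$ are swapped with respect to $\phi$), the identity $\varepsilon(s_\psi)\beta(\phi_\psi)\beta(\pi) = 1$ becomes a purely combinatorial assertion that can be verified directly, with the argument branching by group type (symplectic, orthogonal, unitary) and, for non-quasi-split forms, by the Kottwitz sign $e(G)$.

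\emph{Step 2 (inductive step).} For non-supercuspidal tempered $\pi = \pi(\phi, \varepsilon)$, I would use the description of Jacquet modules of tempered representations of classical groups (see for instance \cite{Xu17a}) to find a Jordan block $(\rho, a)$ of $\phi$ and a tempered $\pi_0 = \pi(\phi_0, \varepsilon_0)$ on a smaller classical group $G(V_0)$ such that $\phi_0$ is obtained from $\phi$ by decreasing the multiplicity of $(\rho, a)$, $\varepsilon_0$ is the restriction of $\varepsilon$ to $\mathcal{S}_{\phi_0}$, and $\pi$ embeds into $\rho \otimes S_a \rtimes \pi_0$. Each of the three signs then satisfies a multiplicativity relation under this reduction: the ratios $\beta(\pi)/\beta(\pi_0)$, $\beta(\phi_\psi)/\beta((\phi_0)_{\psi_0})$, and $\varepsilon(s_\psi)/\varepsilon_0(s_{\psi_0})$ can each be computed explicitly from the removed block by means of the rank formulas for the relevant Levi subgroups and \eqref{eq eva s_psi}. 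Checking that the product of the three ratios equals $1$ reduces the identity for $(\phi, \varepsilon)$ to the identity for $(\phi_0, \varepsilon_0)$, and the induction terminates at Step 1.

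The main obstacle is the precise bookkeeping in Step 2: the sign $\beta(\phi_\psi)$ is controlled by the minimal Levi through which $\phi_\psi$ factors, and this Levi can shift in subtle ways when a single Jordan block is removed, because the decomposition of $\phi_\psi$ into $W_F$-characters already spans several $\GL$-factors inside the Levi of $G^{\ast}$. Bad-parity and non-self-dual components of $\phi$ must also be tracked separately, since they contribute to $\beta(\phi_\psi)$ and $\beta(\pi)$ but play no role on the component-group side. In Step 1 the combinatorial identity arising from M{\oe}glin's parameterization is intricate but finite, and additional care is needed for the non-quasi-split pure inner forms, where $\beta(\pi)$ differs from the quasi-split case by the Kottwitz sign.
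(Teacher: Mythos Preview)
Your two-step architecture matches the paper's, and Step 1 is essentially correct: the paper likewise uses M{\oe}glin's classification to write a supercuspidal parameter as $\phi = \bigoplus_{\rho \in R_0} \rho \otimes (S_2 + \cdots + S_{2m_\rho}) \otimes S_1 \oplus \bigoplus_{\rho \in R_1} \rho \otimes (S_1 + \cdots + S_{2m_\rho - 1}) \otimes S_1$ with $\varepsilon$ alternating, and then computes each of the three signs explicitly. One minor comment: no branching by group type or by $e(G)$ is actually needed, since $\beta(\pi) = (-1)^{r(G)}$ and the formula $\beta(\phi_\psi) = (-1)^{f}(-1)^{r(G)}$ already make the rank contributions cancel uniformly.

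There is, however, a real gap in your Step 2. Your reduction --- remove one copy of a block $(\rho,a)$ from $\phi$ and embed $\pi$ into $\rho \otimes S_a \rtimes \pi_0$ --- is precisely Case (a) of the paper's Definition \ref{def reduction of tempered}, and it only applies when $\phi$ is \emph{not discrete} (i.e.\ has a repeated block or a bad-parity/non-self-dual summand). When $\phi$ is discrete but $(\phi,\varepsilon)$ fails M{\oe}glin's supercuspidality conditions, no such embedding exists: each block has multiplicity one, so there is nothing to strip off. In that situation the paper instead \emph{shifts} a block down, replacing $\rho \otimes S_a$ by $\rho \otimes S_{a-2}$ (or deleting $\rho \otimes S_2$), and uses embeddings of the shape
\[
\pi(\phi,\varepsilon) \hookrightarrow \rho|\cdot|^{\frac{a-1}{2}} \rtimes \pi(\phi^-,\varepsilon^-) \quad \text{or} \quad \pi(\phi,\varepsilon) \hookrightarrow \rho|\cdot|^{\frac{a-1}{2}} \times \cdots \times \rho|\cdot|^{\frac{-a+3}{2}} \rtimes \pi(\phi^-,\varepsilon^-),
\]
coming from Lemma \ref{lem Jacquet module}(b-1), (b-2), (c). The inducing factor here is a twist of the supercuspidal $\rho$ on $\GL_{\dim \rho}$, not the discrete series corresponding to $\rho \otimes S_a$ on $\GL_{a\dim\rho}$, and the resulting change in $\phi_\psi$ is the removal of the pair $\rho|\cdot|^{\pm(a-1)/2}$ rather than a whole string. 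Without these additional cases your induction stalls at discrete $\phi$, never reaching the supercuspidal base. Once you add them, the ratio computations you outline go through exactly as in the paper's observations (i)--(iii); the bookkeeping you flag as an obstacle is in fact quite clean because $\beta(\phi_\psi)/\beta(\phi_{\psi^-})$ and $\beta(\pi)/\beta(\pi^-)$ are both controlled by the same $\GL$-factor in each case.
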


For symplectic and special orthogonal groups, this proposition is a special case of \cite[Proposition 4.2]{Moe06b}. In \S \ref{sec eq of sign}, we prove Proposition \ref{prop eq of sign Moeglin} including unitary groups uniformly. Hence, 
we conclude the discussion in this subsection as in the following theorem.

\begin{thm}\label{thm main}
     Conjecture \ref{conj 2.2.1} holds for any anti-tempered local Arthur parameter $\psi$ of any pure inner form of classical group $G(V)$. Moreover, we have the equality of distributions
       \begin{align}\label{eq final mod}
        \varepsilon( s_{\psi}) \pi(\psi, \varepsilon \varepsilon^{M/MW}_{\psi}) = \beta (\phi_{\psi}) \beta(\pi(\phi,\varepsilon)) \widehat{\pi(\phi, \varepsilon)}, 
    \end{align}
    for any $\varepsilon \in \widehat{\mathcal{S}}_{\psi,\chi_V}$
\end{thm}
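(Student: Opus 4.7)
The plan is to run the sign bookkeeping calculation already sketched in the discussion preceding the statement, assembling the auxiliary inputs (Theorem \ref{thm tempered}, Proposition \ref{prop red 2.2.1(a) to tempered}, Theorem \ref{thm Hiraga}, Working Hypothesis \ref{assu twisted}, Lemma \ref{lem MW/W}, Proposition \ref{prop eq of sign Moeglin}) in the right order. Part (a) of Conjecture \ref{conj 2.2.1} is immediate: Theorem \ref{thm tempered} gives it for tempered local Arthur parameters on every endoscopic group, and Proposition \ref{prop red 2.2.1(a) to tempered} propagates this to arbitrary local Arthur parameters, in particular to $\psi$.

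For (b-1), fix an anti-tempered $\psi$ with $\phi = \widehat{\psi}$ tempered, a semisimple $s \in S_\psi = S_\phi$, and the corresponding endoscopic triples $(G', \psi') \to (\psi, s)$ and $(G', \phi') \to (\phi, s)$. Working Hypothesis \ref{assu twisted} applied to $\phi'$, together with the fact that $D_{G'}$ is an involution on the Grothendieck group and $\beta(\phi_{\psi'}) = \pm 1$, yields
\[
\eta_{\psi'} = \beta(\phi_{\psi'}) D_{G'}(\eta_{\phi'}).
\]
Transferring to $G$ via Theorem \ref{thm Hiraga} gives
\[
\Tran_{G'}^G(\eta_{\psi'}) = \beta(\phi_{\psi'}) \alpha(G, G') D_G \circ \Tran_{G'}^G(\eta_{\phi'}).
\]
By Theorem \ref{thm tempered}, $\Tran_{G'}^G(\eta_{\phi'}) = \eta_{\phi, x}$ depends only on the image $x$ of $s$ in $\mathcal{S}_\phi \cong \mathcal{S}_\psi$. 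Lemma \ref{lem MW/W} asserts that the residual sign $e(G)\alpha(G, G') \beta(\phi_\psi) \beta(\phi_{\psi'})$ is also a function of $x$ alone, defining the character $\varepsilon^{M/MW}_\psi$. This establishes (b-1) with
\[
\eta_{\psi, x} = e(G)\, \varepsilon^{M/MW}_\psi(x)\, \beta(\phi_\psi)\, D_G(\eta_{\phi, x}).
\]

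For (b-2) and the explicit formula \eqref{eq final mod}, I expand $\eta_{\phi, x}$ via the defining equations in Theorem \ref{thm tempered} as $e(G) \sum_{\varepsilon} \varepsilon(s_\phi x) \pi(\phi, \varepsilon)$, substitute $D_G(\pi(\phi, \varepsilon)) = \beta(\pi(\phi, \varepsilon)) \widehat{\pi(\phi, \varepsilon)}$, and use $s_\phi = s_\psi$ under the identification of component groups. Because $\varepsilon^{M/MW}_\psi$ is a character of $\overline{\mathcal{S}}_\psi$ by Lemma \ref{lem MW/W} (so it is trivial on the image of $e_0$), the translation $\varepsilon \mapsto \varepsilon \varepsilon^{M/MW}_\psi$ preserves $\widehat{\mathcal{S}}_{\psi, \chi_V}$. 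Reindexing the sum and rearranging the factor $\varepsilon^{M/MW}_\psi(s_\psi x) = \varepsilon^{M/MW}_\psi(x)$ (the triviality on $s_\psi$ from Lemma \ref{lem MW/W}) gives
\[
\eta_{\psi, x} = e(G) \sum_{\varepsilon \in \widehat{\mathcal{S}}_{\psi, \chi_V}} \varepsilon(s_\psi x) \bigl( \gamma(\varepsilon)\, \widehat{\pi(\phi, \varepsilon)}\bigr),
\]
with $\gamma(\varepsilon) := \varepsilon(s_\psi)\, \beta(\phi_\psi)\, \beta(\pi(\phi, \varepsilon))$. Matching this term by term against the defining system of Conjecture \ref{conj 2.2.1}(b-2) forces
\[
\pi(\psi, \varepsilon\varepsilon^{M/MW}_\psi) = \gamma(\varepsilon)\, \widehat{\pi(\phi, \varepsilon)},
\]
which is exactly \eqref{eq final mod}. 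Proposition \ref{prop eq of sign Moeglin} then states $\gamma(\varepsilon) = 1$, so the right hand side is literally an irreducible character, completing (b-2).

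The genuinely nontrivial ingredients are Lemma \ref{lem MW/W} and Proposition \ref{prop eq of sign Moeglin}, which are invoked here as black boxes and proved in \S\ref{sec MW/W} and \S\ref{sec eq of sign} respectively; granting these, the argument is a careful tracking of signs. The subtlest point is the reindexing: one must know that multiplication by $\varepsilon^{M/MW}_\psi$ stabilizes $\widehat{\mathcal{S}}_{\psi, \chi_V}$ (hence the requirement in Lemma \ref{lem MW/W} that $\varepsilon^{M/MW}_\psi$ descend to $\overline{\mathcal{S}}_\psi$), and that $\varepsilon^{M/MW}_\psi(s_\psi) = 1$ so that the sign function sitting next to $\varepsilon(s_\psi x)$ really is independent of $x$. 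Both are built into the statement of Lemma \ref{lem MW/W}.
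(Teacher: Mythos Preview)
Your argument is the same as the paper's, but there is one slip: the claim ``$s_\phi = s_\psi$ under the identification of component groups'' is false. Since $\phi$ is tempered (trivial on the Arthur $\SL_2$), the element $s_\phi = \phi(1,1,-1)$ is the identity, whereas $s_\psi = \psi(1,1,-1) = \phi(1,-1,1)$ is generally nontrivial. The correct step is to note $\varepsilon(s_\phi x) = \varepsilon(x)$, and then insert $1 = \varepsilon(s_\psi)^2$ and use $\varepsilon_\psi^{M/MW}(s_\psi)=1$ to rewrite $\varepsilon\varepsilon_\psi^{M/MW}(x) = \varepsilon\varepsilon_\psi^{M/MW}(s_\psi x)\cdot \varepsilon(s_\psi)$, which is where the factor $\varepsilon(s_\psi)$ in $\gamma(\varepsilon)$ actually comes from. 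With this fix your displayed identity (after reindexing) becomes
\[
\eta_{\psi,x} = e(G)\sum_{\varepsilon \in \widehat{\mathcal{S}}_{\psi,\chi_V}} \varepsilon(s_\psi x)\,\gamma(\varepsilon\varepsilon_\psi^{M/MW})\,\widehat{\pi(\phi,\varepsilon\varepsilon_\psi^{M/MW})},
\]
and comparing term by term with Conjecture~\ref{conj 2.2.1}(b-2) now genuinely yields $\pi(\psi,\varepsilon\varepsilon_\psi^{M/MW}) = \gamma(\varepsilon)\widehat{\pi(\phi,\varepsilon)}$ as you assert. As written, your displayed formula and your stated conclusion do not match each other; the repair above is exactly how the paper handles it.
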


The last part gives a modification for \cite[Lemma 7.1.1]{Art13} and \cite[Lemma 8.2.2]{Mok15}.

\subsection{Generalization to non-classical groups}\label{sec non-classical gp}
In this subsection, we discuss a possible generalization of the construction of anti-tempered local Arthur packets to non-classical groups $G$.

 For non-classical groups $G$, the analogue of Conjecture \ref{conj 2.2.1} is stated in \cite{Art89}, but Part (a) is not proved yet. However, for tempered local Arthur parameter $\phi$, one may define a stable distribution $\eta_{\phi}$ if the local Langlands correspondence for $G$ is established. For example, for $G=G_2(F)$, this is done in \cite{AX22, GS23}. In particular, \cite[Theorem 10.1.7]{AX22} includes the stability of the distribution associated to each discrete packet. 

Thus, in the following discussion, we assume that there exists a canonical way to assign a stable distribution $\eta_{\phi'}$ for each tempered local Arthur parameter $\phi'$ of an endoscopic group $G'$ of $G$. We also assume that Parts (b-1) and (b-2) of Conjecture \ref{conj 2.2.1} hold for any tempered local Arthur parameter $\phi'$ under the canonical assignment $\phi' \mapsto \eta_{\phi'}$. Then, we construct a candidate for anti-tempered local Arthur packets from these assumptions.

Let $\psi'$ be an anti-tempered local Arthur parameter of an endoscopic group $G'$ of $G$ and put $\phi':= \widehat{\psi'}$, which is tempered. We treat Proposition \ref{prop twisted} as the definition of the stable distribution associated to $\psi'$. That is, we define
\[ \eta_{\psi'}:= \beta(\phi_{\psi'}) D_{G'}(\eta_{\phi'}).\]
Then, if Lemma \ref{lem MW/W} and Proposition \ref{prop eq of sign Moeglin} are verified, the same argument implies that \eqref{eq final mod} in Theorem \ref{thm main} also holds, which gives a candidate for the conjectural local Arthur packet for each anti-tempered local Arthur parameter.

\section{\texorpdfstring{Proof of Lemma \ref{lem MW/W}}{}}\label{sec MW/W}

\subsection{Computation for endoscopic groups}\label{sec computation of endoscopic group}
Suppose $(G', \psi') \to (\psi,s)$. In this subsection, we compute $G'$ and $\psi'$ from each $s \in S_{\psi}$ explicitly.

Recall that we have a decomposition
\begin{align*}
    \psi_{\GL}&= \bigoplus_{i \in I_{gp}} (\rho_i \otimes S_{a_i} \otimes S_{b_i})^{\oplus m_i} + \bigoplus_{i \in I_{bp}} (\rho_i \otimes S_{a_i} \otimes S_{b_i})^{\oplus m_i} \\
    &+\bigoplus_{i \in I_{nsd}} \left(\rho_i |\cdot|^{x_i}\otimes S_{a_i} \otimes S_{b_i} + {}^\sigma \rho_i^{\vee} |\cdot|^{-x_i}\otimes S_{a_i} \otimes S_{b_i} \right)^{\oplus m_i},  
\end{align*}
and
\[  C_{\psi}  \cong \prod_{i \in I_{gp}} \rO_{m_i}(\BC) \times  \prod_{i \in I_{bp}} \Sp_{m_i}(\BC)  \times \prod_{i \in I_{nsd}} \GL_{m_i}(\BC).\]
Let $I:= I_{gp} \sqcup I_{bp} \sqcup I_{nsd}$. 
For any $s \in C_{\psi_{\GL}}$, write $s= (s_i)_{i \in I}$ under the above isomorphism. Let $\Lambda'$ be the set of eigenvalues of $s$ that is not $\pm 1$ and choose $\Lambda \subseteq \Lambda'$ such that for any $\lambda \in \Lambda'$, exactly one of $\lambda,  \lambda^{-1}$ is in $\Lambda$. For each $i \in I$ and $\lambda \in \Lambda \sqcup \{ \pm 1\}$, let $r_{\lambda, i}$ denote the multiplicity of $\lambda$ in the multi-set of eigenvalues of $s_i$. Finally, for $\lambda \in \Lambda \sqcup \{ \pm 1\}$, let
\[ r_{\lambda}:= \sum_{i \in I_{gp} \sqcup I_{bp}} r_{\lambda,i}  \dim( \rho_i|\cdot|^{x_i} \otimes S_{a_i} \otimes S_{b_i} )+\sum_{i \in I_{nsd} } 2r_{\lambda,i} \dim( \rho_i|\cdot|^{x_i} \otimes S_{a_i} \otimes S_{b_i} ) .  \]
Then we have
\[ \widehat{G'} \cong \Cent(s,\widehat{G})^{\circ } \cong  \Aut(M_{1},B_1)^{\circ} \times \Aut(M_{-1}, B_{-1})^{\circ} \times \prod_{\lambda \in \Lambda } \GL_{r_{\lambda}} (\BC) ,  \]
where $M_{\pm 1}$ is the eigenspace of $\pm 1$ of $s$ and $B_{\pm 1}$ is the restriction of $B$ on $M_{\pm 1}$. Note that $\dim(M_{\pm 1})=r_{\pm 1}.$  For $\lambda \in \Lambda$, let
\begin{align*}
    \psi_{\lambda}:&= \bigoplus_{i \in I_{gp}} (\rho_i \otimes S_{a_i} \otimes S_{b_i})^{\oplus r_{\lambda,i}} + \bigoplus_{i \in I_{bp}} (\rho_i \otimes S_{a_i} \otimes S_{b_i})^{\oplus r_{\lambda,i}} \\
    &+\bigoplus_{I_{nsd}} \left(\rho_i |\cdot|^{x_i}\otimes S_{a_i} \otimes S_{b_i} )^{\oplus r_{\lambda ,i}} + {}^\sigma \rho_i^{\vee} |\cdot|^{-x_i}\otimes S_{a_i} \otimes S_{b_i} \right)^{\oplus r_{\lambda^{-1},i}},
\end{align*}
 which is a local Arthur parameter of $\GL_{r_{\lambda}}(E)$. Next, consider the following local Arthur parameter of $\GL_{r_{\pm 1}}(E)$
\begin{align*}
    (\psi_{\pm 1})_{\GL}:&= \bigoplus_{i \in I_{gp}} (\rho_i \otimes S_{a_i} \otimes S_{b_i})^{\oplus r_{\pm 1,i}} + \bigoplus_{i \in I_{bp}} (\rho_i \otimes S_{a_i} \otimes S_{b_i})^{\oplus r_{\pm 1,i}} \\
    &+\bigoplus_{I_{nsd}} \left(\rho_i |\cdot|^{x_i}\otimes S_{a_i} \otimes S_{b_i} + {}^{\sigma }\rho_i^{\vee} |\cdot|^{-x_i}\otimes S_{a_i} \otimes S_{b_i} \right)^{\oplus r_{\pm 1,i}}.  
\end{align*}
Note that $(\psi_{\pm 1})_{\GL}$ may not come from any local Arthur parameter of classical groups. For example, it is possible that $\Aut(M_{1}, B_{ 1})^{\circ} \cong \SO_{2n+1}(\BC)$ but $\det(  (\psi_{ +1})_{\GL})$ is non-trivial. To remedy this, we fix a choice of a (conjugate-)self-dual character $\eta^{\pm}$ of $W_E$ case by case as follows. If $E=F$, let 
\[\eta^{\pm}(w):= \det((\psi_{\pm 1})_{\GL})(w,1,1)\]
if $\Aut(M_{\pm 1}, B_{\pm 1})^{\circ} \cong \SO_{2n+1}(\BC)$, and trivial otherwise. If $E \neq F$, we take $\eta^{\pm}$ to be a conjugate-self-dual character such that for any $s \in W_{F} \setminus W_E$,
\[ \eta^{\pm}(s^2)= (-1)^{\dim_E(V)-r_{\pm 1}}. \]
Then the local Arthur parameter $(\psi^{\pm})_{\GL}:= (\psi_{\pm 1})_{\GL} \otimes \eta^{\pm}$ must come from a local Arthur parameter $\psi^{\pm}$ of some quasi-split group $G(V_{\pm 1})$ with $\widehat{G(V_{\pm })}= \Aut(M_{\pm 1}, B_{\pm 1 })^{\circ}$. We conclude that 
 \begin{align*}
     G' &\cong G(V_+) \times G(V_{-}) \times \prod_{\lambda \in \Lambda} \GL_{r_{\lambda}}(E),\\
     \psi' & \cong \psi^{+} \times \psi^{-}\times  \prod_{\lambda \in \Lambda }\psi_{\lambda}.
 \end{align*}
We remark that different choice of $\eta^{\pm}$ may give rise to different embeddings $\xi: {}^L G' \to {}^L G$, but the endoscopic data are isomorphic. Finally, note that 
\[ \psi_{\GL}= (\psi_{1})_{\GL}+ (\psi_{-1})_{\GL} + \bigoplus_{\lambda \in \Lambda} (\psi_{\lambda} + {}^\sigma \psi_{\lambda}^{\vee} ), \]
and the image of $s$ in $\mathcal{S}_{\psi}$ corresponds to the function $x:I_{gp} \to \{ \pm 1\}$ given by
\begin{align}\label{eq image of s}
    x( \rho_i \otimes S_{a_i} \otimes S_{b_i} )= (-1)^{r_{-1,i}}.
\end{align}

\subsection{Proof of Lemma \ref{lem MW/W}}\label{sec proof of lemma MW/W}
Since $ e(G)\alpha(G, G')= \alpha(G^{\ast}, G')$, we may assume $G=G(V)$ is quasi-split throughout the proof. We shall use the notation developed in the previous subsection.

First, let $\varphi$ be an $L$-parameter of $G=G(V)$. We describe $\beta(\varphi)$ in terms of the decomposition
\begin{align*}
    \varphi_{\GL}= \bigoplus_{i \in I_{gp}} (\rho_i \otimes S_{a_i})^{\oplus m_i} + \bigoplus_{i \in I_{bp}} (\rho_i \otimes S_{a_i} )^{\oplus m_i} +\bigoplus_{i \in I_{nsd}} \left(\rho_i |\cdot|^{x_i}\otimes S_{a_i} + {}^\sigma \rho_i^{\vee} |\cdot|^{-x_i}\otimes S_{a_i}  \right)^{\oplus m_i}. 
\end{align*}
Let $d_{i}:= \dim(\rho_i|\cdot|^{x_i} \otimes S_{a_i})$  for $i \in I=I_{gp}\sqcup I_{bp} \sqcup I_{nsd}$ ($x_i=0$ for $i \in I_{gp} \sqcup I_{bp}$). Then a minimal Levi $M_{\varphi}$ for which the $L$-group contains the image of $\varphi$ is isomorphic to
\[ \prod_{i \in I_{nsd}} (\GL_{d_i}(E))^{\times m_i} \times \prod_{i \in I_{bp}} (\GL_{d_i}(E))^{\times m_i/2} \times \prod_{i \in I_{gp}} (\GL_{d_i}(E))^{\times \lfloor \half{m_i} \rfloor} \times G(V_{an, r}), \]
where (recall that $\mathfrak{r}= \Witt(V)$)
\[ r= \mathfrak{r}-\left( \sum_{i \in I_{gp}}   d_i \cdot \left\lfloor \half{m_i} \right\rfloor +  \sum_{i \in I_{bp}}    \half{d_i m_i} + \sum_{i \in I_{nsd}}   d_im_i  \right).\]
Thus, $\beta(\varphi)= (-1)^{m_{\varphi}+\dim(A_{M_0})}$, where
\[ m_{\varphi}:= \sum_{i \in I_{gp} } \left\lfloor \half{m_i} \right\rfloor+ \sum_{i \in I_{bp}}\half{m_i}+ \sum_{i \in I_{nsd}} m_i.\]

Next, for an $L$-parameter $\varphi_{\GL}$ of $\GL_n(E)$, we compute $\beta_{\GL}(\varphi_{\GL}):=\beta(\varphi_{\GL})$ in terms of the decomposition
\[ \varphi_{\GL}= \bigoplus_{j \in J} (\rho_j|\cdot|^{x_j}\otimes S_{a_j})^{\oplus m_j}. \]
Again, let $d_j= \dim(\rho_j|\cdot|^{x_j}\otimes S_{a_j} )$. In this case, the minimal parabolic subgroup $M_{\varphi}$ is isomorphic to
\[ \prod_{j \in J} (\GL_{d_j}(E))^{\times m_j}, \]
and  $\beta_{\GL}(\varphi_{\GL})= (-1)^{m_{\varphi_{\GL}}+n}$, where 
\[ m_{\varphi_{\GL}}=\sum_{j\in J} m_j. \]
As a consequence, if $\varphi, \varphi_0$ are $L$-parameters of classical groups of the same type and $\varphi_1$ is an $L$-parameter of $\GL_n(E)$ such that $\varphi= \varphi_{0}+ (\varphi_1 + {}^\sigma \varphi_1^{\vee})$, then 
\begin{align}\label{eq beta decomp}
    \beta(\varphi)= \beta(\varphi_0) \beta_{\GL}(\varphi_1).
\end{align}

Now we reduce the computation to the elliptic case. Let $\psi_{ell}:= \psi_1 + \psi_{-1}$, which is a local Arthur parameter of $G_{ell}$, a group of the same type as $G(V)$. Then 
\[ \psi= \psi_{ell} + \bigoplus_{\lambda \in \Lambda} (\psi_{\lambda} + {}^\sigma \psi_{\lambda}^{\vee} ), \]
and hence
\[ \phi_{\psi}= \phi_{\psi_{ell}} + \bigoplus_{\lambda \in \Lambda} (\phi_{\psi_{\lambda}} + {}^\sigma \phi_{\psi_{\lambda}}^{\vee} ).\]
Let $G_{ell}'= G(V^+) \times G(V^{-})$ and $\psi_{ell}'=\psi^+ \times \psi^-$. Then since $G' \cong G_{ell}' \times \prod_{\lambda \in \Lambda} \GL_{r_{\lambda}}(E)$, we have $\alpha(G,G')= \alpha(G_{ell}, G_{ell}')$, which implies (by \eqref{eq beta decomp})
\begin{align*}
    \alpha(G,G') \beta(\phi_{\psi}) \beta(\phi_{\psi'})= \alpha(G_{ell},G_{ell}') \beta(\phi_{\psi_{ell}}) \beta(\phi_{\psi_{ell}'}).
\end{align*}
Therefore, we shall assume $\Lambda$ is empty from now on.

Consider a local Arthur parameter $\psi_{gp}$ of $G_{gp}=G(V_{gp})$, where
\begin{align*}
    \psi_{gp}&= \bigoplus_{i \in I_{gp}} (\rho_i \otimes S_{1} \otimes S_{b_i})^{\oplus m_i},\\
    \psi_{ngp}&= \bigoplus_{i \in I_{bp}} (\rho_i \otimes S_{1} \otimes S_{b_i})^{\oplus \half{m_i}} 
    +\bigoplus_{i \in I_{nsd}} \left(\rho_i \otimes S_{1} \otimes S_{b_i} \right)^{\oplus m_i},
\end{align*}
so that $\psi= \psi_{gp}+ (\psi_{ngp}+ {}^\sigma \psi_{ngp}^{\vee}).$ Let $ I_{gp,\rho}:=\{i \in I_{gp}\ | \ \rho_i \cong \rho\}$ and rewrite
\[ \psi_{gp}=  \bigoplus_{\rho}\bigoplus_{i \in I_{gp,\rho}} (\rho \otimes S_{1} \otimes S_{b_i})^{\oplus m_i}. \]
Let $R_0$ (resp. $R_1$) be the collection of $\rho$ such that $I_{gp,\rho} $ is non-empty and $b_i$ is even (resp. odd) for any $i \in I_{gp, \rho}$. Finally, let $m_{\rho}= \sum_{i \in I_{\rho, gp}} m_i$. We have
\[ \phi_{\psi}= \phi_0+ (\phi_{1}+ {}^\sigma \phi_1^{\vee})+ (\phi_{\psi_{ngp}} +{}^\sigma \phi_{\psi_{ngp}}^{\vee}),\]
where
\begin{align*}
    \phi_1= \left(\bigoplus_{\rho \in R_0} \bigoplus_{i \in I_{gp, \rho}} \left( \bigoplus_{k=0}^{(b_i-2)/2} \rho|\cdot|^{\half{1}+k} \right)^{\oplus m_i}+ \bigoplus_{\rho \in R_1} \left( \rho^{\oplus \left\lfloor \half{m_{\rho}} \right\rfloor} +\bigoplus_{i \in I_{gp, \rho}} \left(  \bigoplus_{k=0}^{(b_i-3)/2} \rho|\cdot|^{1+k} \right)^{\oplus m_i}\right)\right)\otimes S_1,
\end{align*}
and 
\begin{align*}
\phi_{0}=\left(\bigoplus_{\rho \in R_1}  \rho^{ \oplus m_{\rho}- 2 \left\lfloor \half{m_{\rho}} \right\rfloor} \right)\otimes S_1.
\end{align*}
Thus, let 
\[ f_{\rho}:= \begin{cases}
    \sum_{i \in I_{gp, \rho}}\half{m_ib_i} & \text{ if }\rho \in R_0,\\
   \left\lfloor \half{m_{\rho}} \right\rfloor + \sum_{i \in I_{gp, \rho}} \half{m_i(b_i-1)} & \text{ if }\rho \in R_1.
\end{cases}\]
We obtain that $M_{\phi_{\psi}}$ is isomorphic to
\[ M_{\phi_{\psi_{ngp}}} \times \prod_{\rho \in  R_0 \sqcup R_1} (\GL_{\dim(\rho)}(E))^{\times f_{\rho}} \times G(V'),\]
where $V\cong V' \oplus \H^{\dim(\psi_{ngp})+\sum_{\rho \in R_0 \sqcup R_1}f_{\rho}}$. This implies 
\begin{align}\label{eq beta(phi_psi)}
    \beta(\phi_{\psi})= \beta_{\GL}(\phi_{\psi_{ngp}}) (-1)^{f} (-1)^{r(G_{gp})},
\end{align}
where
\[ f=  \sum_{\rho \in R_0 \sqcup R_1} f_{\rho}.\]

The same computation works for $ \beta(\phi_{\psi_{\pm 1}})$ by replacing $m_i$ with $r_{\pm 1 , i}$. To be explicit, let $\psi_{gp}^{\pm}$ be a local Arthur parameter of $G_{gp}^{\pm}=G(V_{gp}^{\pm})$, where
\begin{align*}
    \psi_{gp}^{\pm}&:= \bigoplus_{i \in I_{gp}} (\rho_i \otimes S_{1} \otimes S_{b_i})^{\oplus r_{\pm 1,i}},\\
    \psi_{ngp}^{\pm}&:= \bigoplus_{i \in I_{bp}} (\rho_i \otimes S_{1} \otimes S_{b_i})^{\oplus \half{r_{\pm 1,i}}} 
    +\bigoplus_{i \in I_{nsd}} \left(\rho_i \otimes S_{1} \otimes S_{b_i} \right)^{\oplus r_{\pm 1, i}},
\end{align*}
so that $\psi^{\pm}= \psi_{gp}^{\pm}+ (\psi_{ngp}^{\pm}+ {}^\sigma (\psi_{ngp}^{\pm})^{\vee})$. Let $m_{\rho}^{\pm}:= \sum_{i \in I_{gp, \rho}} r_{\pm 1, i}$ and 
\[ f_{\rho}^{\pm}:= \begin{cases}
    \sum_{i \in I_{gp, \rho}}\half{r_{\pm 1, i}b_i} & \text{ if }\rho \in R_0,\\
   \left\lfloor \half{m_{\rho}^{\pm}} \right\rfloor + \sum_{i \in I_{gp, \rho}} \half{r_{\pm 1, i}(b_i-1)} & \text{ if }\rho \in R_1.
\end{cases}\]
Then $\beta(\phi_{\psi^{\pm}})= \beta_{\GL}(\phi_{\psi_{ngp}^{\pm}}) (-1)^{f^{\pm}} (-1)^{r(G_{gp}^{\pm})}$, where
\[ f^{\pm}= \sum_{\rho \in R_0 \sqcup R_1} f_{\rho}^{\pm}.\]

Finally, we compute $\alpha(G,G')\beta(\phi_{\psi})\beta(\phi_{\psi'})$. Note that $\psi_{ngp}= \psi_{ngp}^+ + \psi_{ngp}^-$ and hence 
\begin{enumerate}
    \item [$\oldbullet$] $\beta_{\GL}(\phi_{\psi_{ngp}})= \beta_{\GL}(\phi_{\psi_{ngp}}^+) \beta_{\GL}(\phi_{\psi_{ngp}^-}),$ and 
    \item [$\oldbullet$] $\alpha(G, G')= (-1)^{r(G(V))+ r(G(V_1))+ r(G(V_{-1}))}= (-1)^{ r(G_{gp})+ r(G_{gp}^{+})+ r(G_{gp}^{-})}$.
\end{enumerate}
 Since $m_i= r_{+1, i}+ r_{-1, i}$, we obtain
\begin{align*}
\alpha(G,G') \beta(\phi_{\psi}) \beta(\phi_{\psi'})&=  \alpha(G,G') \beta(\phi_{\psi})\beta(\phi_{\psi^+})\beta(\phi_{\psi^-})\\ 
&= \prod_{\rho \in R_1} (-1)^{\left\lfloor \half{m_{\rho}^{+}} \right\rfloor+ \left\lfloor \half{m_{\rho}^-} \right\rfloor+ \left\lfloor \half{m_{\rho}} \right\rfloor }\\
&= \prod_{\rho \in R_1 }  (-1)^{m_{\rho}^{-}   (m_{\rho}-1)}\\
&=\prod_{\rho \in R_1 } \prod_{i \in I_{gp,\rho}} (-1)^{ r_{-1, i}  (m_{\rho}-1)}\\
&=\prod_{i \in I_{gp}} (-1)^{r_{-1, i} b_i   (m_{\rho_i}-1)}.
\end{align*}
The third equality follows from the observation that since $m_{\rho}= m_{\rho}^++ m_{\rho}^{-}$, 
\begin{align*}
    (-1)^{\left\lfloor \half{m_{\rho}^{+}} \right\rfloor+ \left\lfloor \half{m_{\rho}^-} \right\rfloor+ \left\lfloor \half{m_{\rho}} \right\rfloor }= \begin{cases}
        1 & \text{ if }m_{\rho} \text{ is odd,}\\
        (-1)^{m_{\rho}^-}  & \text{ if }m_{\rho} \text{ is even.}
    \end{cases}
\end{align*}
Let $x$ be the image of $s$ in $\mathcal{S}_{\psi}$ described by \eqref{eq image of s}. From the above computation, we conclude that 
\[ \alpha(G,G') \beta(\phi_{\psi}) \beta(\phi_{\psi'})= \langle \varepsilon, x \rangle, \]
where $\widehat{\mathcal{S}}_{\psi} \ni \varepsilon: I_{gp} \to \{\pm 1\}$ is given by
\begin{align}\label{eq epsilon}
    \varepsilon(\rho_i \otimes S_{1} \otimes S_{b_i})= (-1)^{ b_i (m_{\rho_i }-1)}.
\end{align}
It is straightforward to check that
\[ \langle \varepsilon ,e_0 \rangle= \prod_{i \in I_{gp}} (-1)^{b_i m_i (m_{\rho_i}-1)}=1,\]
and hence $ \varepsilon \in \widehat{\overline{\mathcal{S}}}_{\psi}.$ Also, by \eqref{eq eva s_psi},
\begin{align*}
    \varepsilon(s_{\psi})= \prod_{i \in I_{gp}} \varepsilon( \rho_i \otimes S_{a_i} \otimes S_{b_i})^{ m_i (b_i-1)}=\prod_{i \in I_{gp}} (-1)^{  (m_{\rho_i}-1) b_i(b_i-1)}=1.
\end{align*}
This completes the proof of the lemma. \qed

\subsection{\texorpdfstring{Match with the character $\varepsilon_{\psi}^{M/MW}$}{}}\label{sec match}
In this subsection, we match the character $\varepsilon$ derived in the previous subsection with the character $\varepsilon_{\psi}^{M/MW}$ defined in \cite[Definition 8.1]{Xu17b} for anti-tempered $\psi$. 

Without loss of generality, we assume $\psi$ is of good parity, i.e. $I_{bp} \sqcup I_{nsd} = \emptyset$, and write
\begin{align*}
    \psi_{\GL}&= \bigoplus_{i \in I_{gp}} (\rho_i \otimes S_{1} \otimes S_{b_i})^{\oplus m_i}\\
    &= \bigoplus_{\rho} \bigoplus_{i \in I_{gp, \rho}} (\rho\otimes S_{1} \otimes S_{b_i})^{\oplus m_i}\\
    &= \bigoplus_{\rho} \bigoplus_{i \in I_{\rho}} \rho \otimes S_{1} \otimes S_{b_i}.
\end{align*}
Here $I_{\rho}$ is another index set. To match with Xu's notation, we need to fix a collection of signs $\zeta=(\zeta_i)_{i \in I_{\rho}}$ such that $\zeta_i (a_i- b_i ) \geq 0$ and a total order $>$ on $I_{\rho}$ such that the following property holds.
\begin{enumerate}
    \item [(P)] If $A_i>A_j$, $B_i>B_j$ and $\zeta_i=\zeta_j$, then $i>j$,
\end{enumerate}
where
\[ A_i:= \frac{a_i+b_i }{2}-1,\ B_1:=  \half{\zeta_i(a_i-b_i)}. \]
We fix $\zeta:= (-1)_{i \in I_{\rho}}$ and $>$ any total order on $I_{\rho}$ that is non-decreasing with respect to $b_i$. One can directly check from \eqref{eq epsilon} that $\varepsilon= \varepsilon_{\psi}^{M/MW}$ defined below.

\begin{defn}[{\cite[Definitions 8.1]{Xu17b}}]

Let $\psi$ be a local Arthur parameter of quasi-split symplectic or special orthogonal groups and fix a collection of signs $\zeta$ and a total order $>$ on each $I_{\rho}$ satisfying (P). Define a  character $\varepsilon^{M/MW}_{\psi}$ as follows,
\begin{align*}
    \varepsilon^{M/MW}_{\psi}(\rho \otimes S_{a_i} \otimes S_{b_i}):= \begin{cases}
        1 & \textrm{ if }a_i+b_i \textrm{ is odd,}\\
        1 & \textrm{ if }a_i, b_i \textrm{ are both even},\\
        (-1)^{m} & \textrm{ if }a_i, b_i \textrm{ are both odd and } \zeta_i=+1,\\
         (-1)^{m+n} & \textrm{ if }a_i, b_i \textrm{ are both odd and } \zeta_i=-1,
    \end{cases}
\end{align*}
    where the number $m$ and $n$ are defined by
    \begin{align*}
        m&=\#\{ j \in I_{\rho}\ | \ j >i,\ a_j, b_j \textrm{ are both odd, and }\zeta_j=-1\},\\
        n &=\#\{ j \in I_{\rho}\ | \ j <i,\ a_j, b_j \textrm{ are both odd}\}.
    \end{align*}
\end{defn}

From now on, we shall write $\varepsilon_{\psi}^{M/MW}$ for the character $\varepsilon$.

\begin{remark}\label{rmk in Lpacket}
Let $\psi$ be a local Arthur parameter of $G$. One can associate the group $S_{\phi_{\psi}}:= \textrm{Cent}(\textrm{Im}(\phi_{\psi}, \widehat{G}))$ and the component group $\mathcal{S}_{\phi_{\psi}}:= S_{\phi_{\psi}}/ S_{\phi_{\psi}}^{\circ}$ similarly. Since $\textrm{im}(\phi_{\psi}) \subseteq \textrm{im}(\psi)$, the injection $S_{\psi} \hookrightarrow S_{\phi_{\psi}}$ induces a surjection  $\mathcal{S}_{\psi} \twoheadrightarrow \mathcal{S}_{\phi_{\psi}}$ and an injection $\widehat{\mathcal{S}}_{\phi_{\psi}} \hookrightarrow \widehat{\mathcal{S}}_{\psi}.$ We describe these maps explicitly, and show that $\varepsilon_{\psi}^{M/MW}$ is in the image of $\widehat{\mathcal{S}}_{\phi_{\psi}}$ when $\psi$ is anti-tempered.

The combinatorial descriptions of the component group $\mathcal{S}_{\phi_{\psi}}$ and its characters $\widehat{\mathcal{S}}_{\phi_{\psi}}$ are similar to the ones given in \S \ref{sec component group} for $ \mathcal{S}_{\psi}$ and $\widehat{\mathcal{S}}_{\psi}$. We state it in terms of the decomposition of $\psi$ now. For simplicity, we assume $\psi$ is of good parity, and write
\[ \psi_{\GL}= \bigoplus_{i \in I_{gp}} (\rho_i\otimes S_{a_i} \otimes S_{b_i})^{\oplus m_i}. \]
Let $I_{gp}^{o}= \{ i \in I_{gp}\ | \ b_i \textrm{ is odd.}\}$. Let $\mathcal{C}_{\phi_{\psi}}$ be the set of functions $e:I_{gp}^o \to \{ \pm 1\}$. Again we write $e(\rho_i \otimes S_{a_i} \otimes S_{b_i})$ instead of $e(i)$. Consider the map $\det: \mathcal{C}_{\phi_{\psi}} \to \{ \pm 1\}$ given by
\[ \det (e):= \prod_{i \in I_{gp}^o} e(\rho_i \otimes S_{a_i} \otimes S_{b_i})^{\dim(\rho_i \otimes S_{b_i})} .\]
Let $\mathcal{C}^{+}_{\phi_{\psi}}= \{e \in \mathcal{C}_{\phi_{\psi}}\ | \ \det(e)=1\}$. Then $\mathcal{S}_{\phi_{\psi}} \cong \mathcal{C}_{\phi_{\psi}}$ if $G=\RU(V)$ and $\mathcal{S}_{\phi_{\psi}} \cong \mathcal{C}_{\phi_{\psi}}^{+}$ otherwise.

Recall that we also regard $\mathcal{S}_{\psi}$ as the set of functions $e: I_{gp} \to \{\pm 1\}$. The surjection $\mathcal{S}_{\psi} \twoheadrightarrow \mathcal{S}_{\phi_{\psi}}$ is exactly the restriction $e \mapsto e|_{I_{gp}^o}$. The injection $\widehat{\mathcal{S}}_{\phi_{\psi}} \hookrightarrow \widehat{\mathcal{S}}_{\psi}$ identifies $ \widehat{\mathcal{S}}_{\phi_{\psi}}$ with the subgroup of functions
\[ \{ \varepsilon: I_{gp} \to \{\pm 1\}\ | \ \varepsilon(\rho_i \otimes S_{a_i} \otimes S_{b_i})=1 \ \forall i \in I_{gp} \setminus I_{gp}^o \}.\]
Under this identification, we conclude that $\varepsilon_{\psi}^{M/MW} \in \widehat{\mathcal{S}}_{\phi_{\psi}}$ when $\psi$ is anti-tempered.    
\end{remark}

\section{\texorpdfstring{Proof of Proposition \ref{prop eq of sign Moeglin} }{}}\label{sec eq of sign}

\subsection{Preparations}\label{sec prep inj}
In this subsection, we collect the statements we need in the reduction process in the proof. Let $\phi$ be a tempered local Arthur parameter of $G$ and write
\begin{align*}
    \phi_{\GL}&= \bigoplus_{i \in I_{gp}} (\rho_i \otimes S_{a_i} \otimes S_{1})^{\oplus m_i} + \bigoplus_{i \in I_{bp}} (\rho_i \otimes S_{a_i} \otimes S_{1})^{\oplus m_i} \\
    &+\bigoplus_{i \in I_{nsd}} \left(\rho_i\otimes S_{a_i} \otimes S_{1} +{}^\sigma \rho_i^{\vee}\otimes S_{a_i} \otimes S_{1} \right)^{\oplus m_i},  
\end{align*}
We say $\phi$ is discrete if $I_{bp} \sqcup I_{nsd}= \emptyset$ and $m_i=1$ for all $i \in I_{gp}$. The following proposition allows us to reduce from general case to the discrete case.

\begin{prop}\label{prop reduction to discrete}
Let $\phi$ be a tempered local Arthur parameter of $G=G(V)$. Suppose $\phi= \phi_0 + (\phi_1 + {}^\sigma \phi_1^{\vee}) $, where $\phi_0$ is a tempered local Arthur parameter of $G(V_0)$ and $\phi_1$ is a tempered local Arthur parameter of $\GL_n(E)$. Then there exists an injection $\mathcal{S}_{\phi_0} \hookrightarrow \mathcal{S}_{\phi}$. Let $\tau_1$ be the unique element in $\Pi_{\phi_1}$. Then for any $\varepsilon_0 \in \widehat{\mathcal{S}}_{\phi_0, \chi_{V_0}}$, we have
\[ \tau_1 \rtimes \pi(\phi_0, \varepsilon_0) = \bigoplus_{ \substack{\varepsilon \in \widehat{\mathcal{S}}_{\psi, \chi_V},\\  \varepsilon|_{\mathcal{S}_{\phi_0}}=\varepsilon_0}} \pi(\phi, \varepsilon). \]
In other words, for any $\varepsilon \in \widehat{\mathcal{S}}_{\phi,\chi_V}$, we have 
\[ \pi(\phi, \varepsilon) \hookrightarrow \tau_1 \rtimes \pi( \phi_0 , \varepsilon|_{\mathcal{S}_{\psi}}) \]
\end{prop}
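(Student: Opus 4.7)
The plan is to prove the two assertions in sequence, reducing the decomposition to an identity of stable distributions which is then inverted character-by-character.

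I first construct the injection $\mathcal{S}_{\phi_0} \hookrightarrow \mathcal{S}_{\phi}$ using the combinatorial description of Section~\ref{sec component group}. The additional summand $\phi_1 + {}^{\sigma}\phi_1^{\vee}$ contributes either to the non-(conjugate-)self-dual part $I_{nsd}$, to the bad-parity part $I_{bp}$, or to $I_{gp}$ with an extra even multiplicity at each new index. In each case, any $s_0 \in S_{\phi_0}$ extends to an element $s \in S_{\phi}$ by acting trivially on the new block, and passing to component groups gives the desired injection. This injection preserves the determinant condition cutting out $\widehat{\mathcal{S}}_{\phi,\chi_V}$, which for special even orthogonal groups amounts to noting that $\disc(V) = \disc(V_0)$ since $V \cong V_0 \oplus \H^n$.

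For the decomposition, fix $x_0 \in \mathcal{S}_{\phi_0}$ with lift $s_0 \in S_{\phi_0}$, extend to $s \in S_{\phi}$ with image $x \in \mathcal{S}_{\phi}$, and let $(G_0', \phi_0') \to (\phi_0, s_0)$ and $(G', \phi') \to (\phi, s)$. The computation of Section~\ref{sec computation of endoscopic group} gives $G' \cong G_0' \times \GL_n(E)$ and $\phi' = \phi_0' \times \phi_1$, hence $\eta_{\phi'} = \eta_{\phi_0'} \otimes \tau_1$. The compatibility of endoscopic transfer with parabolic induction then yields
\[
    \eta_{\phi,x} \;=\; \Tran_{G'}^{G}(\eta_{\phi'}) \;=\; \tau_1 \rtimes \Tran_{G_0'}^{G_0}(\eta_{\phi_0'}) \;=\; \tau_1 \rtimes \eta_{\phi_0, x_0}.
\]
Expanding both sides via the character formula of Conjecture~\ref{conj 2.2.1}(b-2) and using that $s_{\phi} = s_{\phi_0} = 1$ for tempered parameters, after Fourier inversion on $\mathcal{S}_{\phi}$ and rewriting $\varepsilon(x) = (\varepsilon|_{\mathcal{S}_{\phi_0}})(x_0)$, one obtains
\[
    \tau_1 \rtimes \pi(\phi_0, \varepsilon_0) \;=\; \sum_{\substack{\varepsilon \in \widehat{\mathcal{S}}_{\phi, \chi_V}\\ \varepsilon|_{\mathcal{S}_{\phi_0}} = \varepsilon_0}} \pi(\phi, \varepsilon)
\]
as virtual characters. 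To upgrade this distributional identity to an identity of isomorphism classes, I use that tempered parabolic induction from a tempered representation of a Levi subgroup is a semisimple representation of $G$, and by Theorem~\ref{thm tempered} each $\pi(\phi, \varepsilon)$ on the right is an irreducible tempered representation. The identity of virtual characters then forces $\tau_1 \rtimes \pi(\phi_0, \varepsilon_0)$ to decompose as a multiplicity-free direct sum indexed by the fiber of the restriction map on component groups, and Frobenius reciprocity yields the embedding statement.

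The main obstacle is the compatibility of endoscopic transfer with parabolic induction of stable distributions. This is a classical property of Langlands--Shelstad--Kottwitz transfer factors and has been verified in the setting of the twisted trace formula used by Arthur and Mok, but careful bookkeeping of normalizations is required when $\phi_1$ is itself of good parity and contributes new generators to $\mathcal{S}_{\phi}$ beyond the image of $\mathcal{S}_{\phi_0}$. In that situation the fiber of the restriction map has genuinely several elements, and the size of this fiber matches the expected number of irreducible summands predicted by Harish-Chandra's $R$-group theory, so that the multiplicity-freeness and the bijection with the fiber are reconciled.
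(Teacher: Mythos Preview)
The paper does not give a self-contained proof of this proposition; it simply cites \cite[Proposition~2.4.3]{Art13}, \cite[Proposition~3.4.4]{Mok15}, \cite[Proposition~8.3.6]{MR18}, and \cite[\S4.7]{KMSW14}. Your outline is a faithful sketch of the argument in those references and is essentially correct.

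Two small corrections are worth recording. First, the Fourier inversion should be carried out over $\mathcal{S}_{\phi_0}$, not $\mathcal{S}_{\phi}$: the identity $\eta_{\phi,x} = \tau_1 \rtimes \eta_{\phi_0,x_0}$ is available only for $x$ in the image of $\mathcal{S}_{\phi_0} \hookrightarrow \mathcal{S}_{\phi}$, so on the right-hand side you first group the $\varepsilon$'s by their restriction $\varepsilon|_{\mathcal{S}_{\phi_0}}$ and then invert over $\mathcal{S}_{\phi_0}$. Second, when expanding both sides via Conjecture~\ref{conj 2.2.1}(b-2) you need $e(G) = e(G_0)$, which holds because $V \cong V_0 \oplus \H^{n}$ and hence $r(G) - r(G^{\ast}) = r(G_0) - r(G_0^{\ast})$.

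The ``compatibility of endoscopic transfer with parabolic induction'' that you isolate as the main obstacle is exactly the descent identity for Langlands--Shelstad transfer through a Levi subgroup (an endoscopic group of the Levi $M = \GL_n(E) \times G(V_0)$ being a Levi of an endoscopic group of $G$), and in the cited references this is packaged together with the local intertwining relation and the $R$-group analysis you allude to. So your last paragraph correctly identifies where the real work lies.
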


\begin{proof}
For quasi-split symplectic and orthogonal groups, see the proof of \cite[Proposition 2.4.3]{Art13}. For quasi-split unitary groups, see the proof of \cite[Proposition 3.4.4]{Mok15}. For non-quasi-split classical groups, see the proof of \cite[Proposition 8.3.6]{MR18} and  \cite[\S 4.7]{KMSW14}.   
\end{proof}

For a discrete local Arthur parameter $\phi$, we rewrite the decomposition as 
\[ \phi_{\GL}= \bigoplus_{\rho} \bigoplus_{i \in I_{\rho}} \rho \otimes  S_{a_i} \otimes S_1.\]
We need the following description for the cuspidality of $\pi(\phi,\varepsilon)$.

\begin{thm}[{\cite[Theorem 2.5.1]{Moe11},\cite[Corollary 3.5]{MR18}}]\label{thm moeglin supercuspidal}
Let $\phi$ be a tempered local Arthur parameter of $G=G(V)$. The representation $\pi(\phi,\varepsilon)$ is supercuspidal if and only if the followings hold.
    \begin{enumerate}  
    \item [(a)] The parameter $\phi$ is discrete.
        \item [(b)] If $i \in I_{\rho}$ and $a_i \geq 3$, then there exists a $j \in I_{\rho}$ such that $a_j= a_i-2$ and 
\[\varepsilon(\rho\otimes S_{a_i} \otimes S_1)\varepsilon(\rho \otimes S_{a_j} \otimes S_1)=-1.\]
        \item [(c)] If $i \in I_{\rho}$ and $a_i=2$, then $\varepsilon(\rho \otimes S_{a_i}\otimes S_1)=-1$.
    \end{enumerate}
    We say $\varepsilon$ is alternating if Conditions (b) and (c) hold.
\end{thm}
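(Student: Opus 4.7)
The plan is to apply Casselman's criterion: $\pi(\phi, \varepsilon)$ is supercuspidal if and only if $\Jac_P \pi(\phi, \varepsilon) = 0$ for every proper standard parabolic $P$. Since the maximal Levi subgroups of classical groups have the form $\GL_d(E) \times G(V')$, it suffices to check vanishing of the partial Jacquet functors $\Jac_{\rho, x}$ (stripping a single $\rho|\cdot|^x$ from the left, in the sense of M{\oe}glin-Tadi{\'c}) for all self-dual supercuspidal representations $\rho$ of general linear groups over $E$, and all $x > 0$.

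The necessity of (a) is the easier direction. Assume $\phi$ is not discrete, so one can decompose $\phi = \phi_0 + (\phi_1 + {}^\sigma \phi_1^\vee)$ with $\phi_1$ a non-trivial tempered $L$-parameter of some $\GL_n(E)$. Then Proposition \ref{prop reduction to discrete} yields an embedding $\pi(\phi, \varepsilon) \hookrightarrow \tau_1 \rtimes \pi(\phi_0, \varepsilon|_{\mathcal{S}_{\phi_0}})$ into a proper parabolic induction, and by Frobenius reciprocity the corresponding Jacquet module is non-zero, so $\pi(\phi, \varepsilon)$ is not supercuspidal.

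For discrete $\phi$, the heart of the argument is the following Jacquet module non-vanishing criterion: $\Jac_{\rho, x} \pi(\phi, \varepsilon) \neq 0$ if and only if $x = (a_i - 1)/2 > 0$ for some $i \in I_\rho$ such that either (i) $a_i \geq 3$ and either no $j \in I_\rho$ has $a_j = a_i - 2$, or every such $j$ satisfies $\varepsilon(\rho \otimes S_{a_i} \otimes S_1)\, \varepsilon(\rho \otimes S_{a_j} \otimes S_1) = +1$; or (ii) $a_i = 2$ and $\varepsilon(\rho \otimes S_{a_i} \otimes S_1) = +1$. Conditions (b) and (c) of the theorem are precisely the negations of (i) and (ii) (quantified over $i$), and their conjunction is therefore equivalent to simultaneous vanishing of $\Jac_{\rho, x}$ for all such $(\rho, x)$, i.e., to supercuspidality.

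The main obstacle is establishing the Jacquet module criterion in the discrete case. The approach I would take is induction on $\sum_i a_i$, realizing $\pi(\phi, \varepsilon)$ as the socle of $\rho|\cdot|^{(a_i - 1)/2} \rtimes \pi(\phi^-, \varepsilon^-)$ for an appropriately reduced pair $(\phi^-, \varepsilon^-)$ obtained by shrinking $S_{a_i}$ to $S_{a_i - 2}$ (or deleting the block when $a_i = 2$) and adjusting $\varepsilon$ accordingly. The geometric lemma of Bernstein-Zelevinsky controls the Jacquet module of the induced representation in terms of those of $\pi(\phi^-, \varepsilon^-)$, and a delicate socle analysis, together with uniqueness of irreducible subquotients of certain standard modules, propagates the inductive hypothesis to yield the desired formula. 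For symplectic and special orthogonal groups this is \cite[Theorem 2.5.1]{Moe11}; the analogous development for unitary groups and non-quasi-split inner forms is carried out in \cite[Corollary 3.5]{MR18}.
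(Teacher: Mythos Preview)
The paper does not give its own proof of this statement: Theorem~\ref{thm moeglin supercuspidal} is quoted as a known result, with the proof deferred entirely to the cited references \cite[Theorem 2.5.1]{Moe11} and \cite[Corollary 3.5]{MR18}. Your proposal is therefore not competing with a proof in the paper but rather sketching what the cited arguments do, and indeed you acknowledge this in your final paragraph.

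As a sketch of those arguments your outline is broadly correct: Casselman's criterion reduces supercuspidality to vanishing of all proper Jacquet modules, condition (a) follows from Proposition~\ref{prop reduction to discrete} exactly as you say, and the discrete case is handled by an inductive analysis of the partial Jacquet functors $\Jac_{\rho,x}$. One small inaccuracy: it is not quite enough to check $x>0$, since a priori one must rule out all $(\rho,x)$; the reduction to $x=(a_i-1)/2$ with $a_i\geq 2$ uses the structure of the cuspidal support of tempered representations in discrete packets (the exponents are determined by the Deligne-$\SL_2$ factor), which deserves a sentence. Also, the ``delicate socle analysis'' you allude to is where the real work lies in M{\oe}glin's argument, and your proposal does not go beyond naming it; but since the paper itself simply cites the result, this level of detail is already more than the paper provides.
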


 Let $\phi$ be a tempered local Arthur parameter of $G=G(V)$ and $\varepsilon \in \widehat{\mathcal{S}}_{\phi, \chi_V}$. Suppose $\pi(\phi, \varepsilon)$ is not supercuspidal. We associate a representation $\pi(\phi^{-}, \varepsilon^{-})$ of $G(V^{-})$ of smaller rank as follows.

\begin{defn}\label{def reduction of tempered}
     Suppose $\phi$ is a tempered $L$-parameter of $G=G(V)$ and $\varepsilon \in \widehat{\mathcal{S}}_{\phi, \chi_V}$. 
    \begin{enumerate}
    \item [(a)] Suppose $\phi$ is not discrete. Then $\phi= \phi_0+ (\phi_1 + {}^{\sigma}\phi_1^{\vee})$ for some discrete $\phi_0$ of smaller rank. Let $\phi^{-}:= \phi_0$ and $\varepsilon^{-}:= \varepsilon|_{\mathcal{S}_{\phi_0}}$.
    \item [(b-1)] Suppose $\phi$ is discrete and there exists an $i \in I_{\rho}$ such that $a_i \geq 3$ but $a_j \neq a_i-2$ for any $j \in I_{\rho}$. Then let $\phi^{-}:= \phi- \rho \otimes S_{a_i} \otimes S_{1}+ \rho \otimes S_{a_i-2} \otimes S_{1}$, a tempered local Arthur parameter of $G(V^{-})$ of smaller rank, and let $\varepsilon^-$ be the image of $\varepsilon$ under the natural isomorphism $\widehat{\mathcal{S}}_{\phi,\chi_V} \cong \widehat{\mathcal{S}}_{\phi^-,\chi_{V^-}}$.
    \item [(b-2)] Suppose $\phi$ is discrete and there exist $i, j \in I_{\rho}$ such that $a_j= a_i-2$ but 
    \[\varepsilon(\rho\otimes S_{a_i} \otimes S_1)\varepsilon(\rho \otimes S_{a_j} \otimes S_1)=1.\]
    Let $\phi^{-}:= \phi- \rho \otimes S_{a_i} \otimes S_{1}+ \rho \otimes S_{a_i-2} \otimes S_{1}$, a tempered local Arthur parameter of $G(V^{-})$ of smaller rank, and let $\varepsilon^{-}:= \varepsilon|_{\mathcal{S}_{\phi^-}}$.
    \item [(c)] Suppose $\phi$ is discrete and there exists an $i \in I_{\rho}$ such that $a_i=2$ but $\varepsilon(\rho \otimes S_{a_i} \otimes S_1)=1$. Then let $\phi^{-}:= \phi- \rho \otimes S_{a_i} \otimes S_{1}$, a tempered local Arthur parameter of $G(V^{-})$ of smaller rank, and let $\varepsilon^{-}:= \varepsilon|_{\mathcal{S}_{\phi^-}}$.
\end{enumerate}
\end{defn}

Each $n$-dimensional irreducible (conjugate-)self-dual representation $\rho$ of $W_E$ corresponds to a (conjugate-)self-dual supercuspidal representation of $\GL_n(E)$ by local Langlands correspondence of $\GL_n(E)$. By abuse of notation, we also denote this supercuspidal representation by $\rho$.

\begin{lemma}\label{lem Jacquet module}
For the representations $\pi(\phi,\varepsilon)$ and $\pi(\phi^{-},\varepsilon^{-})$ given in Definition \ref{def reduction of tempered}, we have the following injection in each case.
\begin{enumerate}
    \item [(a)] $\pi(\phi, \varepsilon) \hookrightarrow \tau_{\phi_1} \rtimes \pi(\phi^{-}, \varepsilon^{-})$, where $\tau_{\phi_1}$ is the unique element in the tempered local Arthur packet $\Pi_{\phi_1}$ of $\GL_n(E)$.
    \item [(b-1)] $\pi(\phi, \varepsilon) \hookrightarrow \rho|\cdot|^{\half{a_i-1}} \rtimes \pi(\phi^{-}, \varepsilon^{-})$.
    \item [(b-2)] $\pi(\phi, \varepsilon) \hookrightarrow \rho|\cdot|^{\half{a_i-1}} \times \rho|\cdot|^{\half{a_i-3}}  \times  \cdots \times \rho|\cdot|^{\half{-a_i+3}} \rtimes \pi(\phi^{-}, \varepsilon^{-})$.
    \item [(c)] $\pi(\phi, \varepsilon) \hookrightarrow \rho|\cdot|^{\half{1}} \rtimes \pi(\phi^{-}, \varepsilon^{-})$.
\end{enumerate}
\end{lemma}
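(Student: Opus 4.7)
The plan is to handle the four cases of Definition \ref{def reduction of tempered} in turn. Case (a) is immediate from Proposition \ref{prop reduction to discrete}: writing $\phi = \phi^- + (\phi_1 + {}^\sigma \phi_1^\vee)$ with $\phi^-$ discrete, the proposition yields
\[ \tau_{\phi_1} \rtimes \pi(\phi^-, \varepsilon^-) = \bigoplus_{\substack{\varepsilon' \in \widehat{\mathcal{S}}_{\phi, \chi_V} \\ \varepsilon'|_{\mathcal{S}_{\phi^-}} = \varepsilon^-}} \pi(\phi, \varepsilon'), \]
and since by construction $\varepsilon|_{\mathcal{S}_{\phi^-}} = \varepsilon^-$, the representation $\pi(\phi,\varepsilon)$ appears as a direct summand on the right-hand side, giving the claimed embedding.

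For cases (b-1), (b-2) and (c), $\phi$ is a discrete parameter but $\varepsilon$ is not alternating, so by Theorem \ref{thm moeglin supercuspidal} the discrete series $\pi(\phi,\varepsilon)$ is not supercuspidal. In each case the claimed embedding $\pi(\phi,\varepsilon) \hookrightarrow \sigma \rtimes \pi(\phi^-, \varepsilon^-)$ is equivalent, by Frobenius reciprocity, to the statement that $\pi(\phi^-,\varepsilon^-)$ occurs as an irreducible quotient of the normalized Jacquet module $\Jac_P \pi(\phi,\varepsilon)$ for the appropriate standard parabolic $P$. The strategy is to compute these Jacquet modules directly using M{\oe}glin's explicit construction of discrete series in terms of Jordan blocks and alternating characters, together with her iterated partial Jacquet module operators. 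For case (b-1) this amounts to a single reduction step: one peels off the factor $\rho|\cdot|^{\half{a_i-1}}$, the Jordan block $\rho \otimes S_{a_i-2}$ of $\phi^-$ arises precisely from $\rho \otimes S_{a_i}$ losing its top exponent, and the sign at $\rho \otimes S_{a_i-2}$ in $\varepsilon^-$ is forced to equal $\varepsilon(\rho \otimes S_{a_i} \otimes S_1)$ under the isomorphism $\widehat{\mathcal{S}}_{\phi,\chi_V} \cong \widehat{\mathcal{S}}_{\phi^-,\chi_{V^-}}$. Case (c), with $a_i=2$ and $\varepsilon(\rho \otimes S_2 \otimes S_1) = 1$, is the analogous one-step reduction at the bottom of the Jordan block structure.

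Case (b-2) is the most involved. Here two Jordan blocks $\rho \otimes S_{a_i}$ and $\rho \otimes S_{a_j}$ with $a_j = a_i - 2$ both occur in $\phi$ with equal $\varepsilon$-values, and the full Zelevinsky segment $\rho|\cdot|^{\half{a_i-1}} \times \cdots \times \rho|\cdot|^{\half{-a_i+3}}$ must be extracted at once. The plan is to iterate the partial Jacquet module operator, removing one exponent per step; the equality of signs at the two endpoints is what guarantees that each intermediate piece survives in the Jacquet module, and that the resulting character after the entire iteration coincides with $\varepsilon|_{\mathcal{S}_{\phi^-}}$.

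The main obstacle is the iteration in case (b-2): at each intermediate step one is no longer working with a discrete series but with a more general tempered representation, so M{\oe}glin's explicit formulas for Jacquet modules must be tracked in parallel with the behaviour of the component group character under restriction. For symplectic and special orthogonal groups these computations are essentially carried out in \cite{Moe06b, MW06, Xu17b}; executing them uniformly for unitary groups is the principal bookkeeping that remains.
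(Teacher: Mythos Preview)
Your treatment of case (a) is exactly the paper's: both invoke Proposition~\ref{prop reduction to discrete} directly.

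For cases (b-1), (b-2) and (c), the paper does not carry out any Jacquet module computation at all; it simply cites \cite[Theorem 7.1]{AM23} for symplectic and split odd special orthogonal groups and \cite[proof of Theorem 8.3.4]{MR18} for unitary and special orthogonal groups, where precisely these embeddings are already established. Your plan---pass to Frobenius reciprocity and use M{\oe}glin's partial Jacquet module operators to strip off the supercuspidal factors one at a time---is the correct strategy and is essentially what those references do. So there is no disagreement on the underlying mathematics; the difference is only that the paper outsources the work while you propose to redo it.

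Two remarks on your sketch. First, what you wrote is a plan, not a proof: you flag the iteration in (b-2) as ``the principal bookkeeping that remains'' and note that the unitary case still needs to be executed. If you intend a self-contained argument, that bookkeeping is the entire content and must actually be written out; otherwise you should cite \cite{AM23} and \cite{MR18} as the paper does. Second, a minor point on (b-2): the intermediate representations arising after each partial Jacquet module are no longer discrete series, as you observe, so one has to invoke the tempered (not just discrete) version of M{\oe}glin's Jacquet module formulas. These are available in the cited references, but your text refers only to the discrete series framework; be explicit that you are using the more general statements.
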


\begin{proof}
Part (a) follows from Proposition \ref{prop reduction to discrete}. The rest of the assertion follows from \cite[Theorem 7.1]{AM23} for symplectic groups and split special odd orthogonal groups, and from the proof of \cite[Theorem 8.3.4]{MR18} for unitary and special orthogonal groups.
\end{proof}

\subsection{Proof of Proposition \ref{prop eq of sign Moeglin}}\label{sec proof of prop eq of sign Moeglin}
First, we compute the product of sign $\varepsilon(s_{\psi})\beta(\phi_{\psi}) \beta(\pi(\phi, \varepsilon))$ when $\pi(\phi, \varepsilon)$ is a supercuspidal representation of $G=G(V)$. By Theorem \ref{thm moeglin supercuspidal}, we may write
\begin{align}\label{eq supercuspidal L-par}
\begin{split}
    \phi_{\GL}=   \bigoplus_{\rho \in R_{0}} \rho \otimes ( S_{2} + S_{4} + \cdots + S_{2m_{\rho}})  \otimes S_1 
     + \bigoplus_{\rho \in R_{1}} \rho \otimes ( S_{1} + S_{3} +\cdots + S_{2m_{\rho}-1}) \otimes S_1,
    \end{split}
\end{align}
and $\varepsilon$ is alternating. The decomposition of $\psi_{\GL}$ can be obtained from \eqref{eq supercuspidal L-par} by replacing each $\rho \otimes S_{a} \otimes S_1$ with $\rho \otimes S_{1} \otimes S_a$.  By abuse of notation, we let $\varepsilon(\rho\otimes S_{1}\otimes S_{a}):=\varepsilon(\rho\otimes S_{a}\otimes S_{1})$.

First, we compute $\varepsilon(s_{\psi})$. By \eqref{eq eva s_psi} and the condition that $\varepsilon$ is alternating, we have
\begin{align*}
    \varepsilon(s_{\psi})&= \prod_{\rho \in R_0} \prod_{k=1}^{m_{\rho}} \varepsilon(\rho\otimes S_{1}\otimes S_{2k})^{2k-1} \times \prod_{\rho \in R_1} \prod_{k=1}^{m_{\rho}} \varepsilon(\rho\otimes S_{1}\otimes S_{2k-1})^{2k}\\
    &= \prod_{\rho \in R_0} \prod_{k=1}^{m_{\rho}} (-1)^k\\
    &= \prod_{\rho \in R_0}(-1)^{\half{m_\rho (m_{\rho}+1)}}.
\end{align*}

Next, we compute $\beta(\phi_{\psi})$. By the computation in \S \ref{sec proof of lemma MW/W}, we have $\beta(\phi_{\psi})= (-1)^f (-1)^{r(G)}$, where
\begin{align*}
    f&= \sum_{\rho \in R_{0}} \half{m_\rho (m_{\rho}+1)} + \sum_{\rho \in R_1} \left(\left\lfloor \half{m_{\rho}} \right\rfloor + \half{m_{\rho}( m_{\rho}-1)} \right)\\
    & \equiv \sum_{\rho \in R_{0}} \half{m_\rho (m_{\rho}+1)} \mod 2.
\end{align*}

Finally, since $\pi(\phi,\varepsilon)$ is supercuspidal, it follows from the definition that $\beta(\pi(\phi,\varepsilon))= (-1)^{r(G)}$. Therefore,
we have verified $\varepsilon(s_{\psi})\beta(\phi_{\psi}) \beta(\pi(\phi, \varepsilon))=1$ in this case.

Next, it suffices to verify that for the pair of representations $\pi(\phi, \varepsilon)$ and $\pi(\phi^{-}, \varepsilon^{-})$ defined in Definition \ref{def reduction of tempered}, the following equality holds 
\begin{align}\label{eq goal product of sign}
    \varepsilon(s_{\psi})\beta(\phi_{\psi}) \beta(\pi(\phi, \varepsilon))=\varepsilon^-(s_{\psi^{-}})\beta(\phi_{\psi^-}) \beta(\pi(\phi^{-}, \varepsilon^{-})),
\end{align}
where $\psi^{-}:= \widehat{\phi^{-}}$. We verify the above equation case by case. Before starting the verification, we give three useful observations.
\begin{enumerate}
    \item [(i)] If $\varphi= \varphi_0+ (\varphi_1+ {}^{\sigma}\varphi_1^{\vee})$, where $\varphi, \varphi_{0}$ are $L$-parameters of classical groups of the same type and $\varphi_1$ is an $L$-parameter of $\GL_n(E)$, then 
    \[ \beta(\varphi)= \beta (\varphi_0) \beta_{\GL}(\varphi_1).\]
    \item [(ii)] If $\pi \hookrightarrow \tau \rtimes \pi_0$, then $\beta(\pi)= \beta(\pi_0) \beta_{\GL}(\tau)$.
    \item [(iii)] If $\varphi$ is a tempered local Arthur parameter of $\GL_n(E)$ and $\tau \in \Pi_{\varphi}$, then $\beta_{\GL}(\tau)= \beta_{\GL}(\phi_{\widehat{\varphi}} ). $
\end{enumerate}
Observation (i) is computed in \eqref{eq beta decomp}. Observation (ii) follows from definition. For Observation (iii), suppose $\tau$ is an irreducible representation of $\GL_n(E)$ with $L$-parameter $\varphi_{\tau}$. Then the supercuspidal support of $\tau$ is characterized by $\lambda_{\tau}: W_E \to \GL_n(\BC)$ defined by
\[ \lambda_{\tau}(w):= \phi_{\tau}\left(w, \begin{pmatrix}
    |w|^{1/2} & \\ & |w|^{-1/2}
\end{pmatrix}  \right). \]
In particular, $\beta_{\GL}(\tau)= \beta_{\GL}(\lambda_{\tau}\otimes S_1)$. One can check that $\phi_{\widehat{\varphi}}= \lambda_{\tau}$ in the setting of Observation (iii).

Now we start the verification of \eqref{eq goal product of sign}. For Case (a), we have $\psi= \psi^-+ \widehat{\phi_1}+ \widehat{{}^\sigma\phi_1^{\vee}}$. Thus $\varepsilon(s_{\psi})=\varepsilon^-(s_{\psi^-})$ by \eqref{eq eva s_psi} and $\beta(\phi_{\psi})= \beta(\phi_{\psi^-}) \beta_{\GL}( \phi_{\widehat{\phi_1}})$ by Observation (i). By Observation (iii), to verify \eqref{eq goal product of sign}, it remains to check that 
\[ \beta(\pi(\phi, \varepsilon))= \beta_{\GL}(\tau_{\phi_1}) \beta(\pi(\phi^{-}, \varepsilon^{-})).\]
However, this is a direct consequence of Lemma \ref{lem Jacquet module}(a) and Observation (ii). This completes the verification of this case.

For Case (b-1), we have $\psi^{-}= \psi- \rho\otimes S_{1}\otimes S_{a_{i}}+ \rho\otimes S_{1}\otimes S_{a_{i}-2}$, and hence $\varepsilon(s_{\psi})=\varepsilon^{-}(s_{\psi^-})$ and
\[ \phi_{\psi}= \phi_{\psi^-}+ \rho|\cdot|^{\half{a_i-1}}\otimes S_1 + \rho|\cdot|^{\half{-a_i+1}}\otimes S_1. \]
Thus \eqref{eq goal product of sign} is again a consequence of the observations and \ref{lem Jacquet module}(b-1). This completes the verification of this case.

For Case (b-2), we have $\psi= \psi^- + \rho \otimes S_1 \otimes S_{a_i}+ \rho \otimes S_1 \otimes S_{a_i-2}$, and hence $\varepsilon(s_{\psi})=\varepsilon^{-}(s_{\psi^-})$ and 
\[ \phi_{\psi}= \phi_{\psi^-} + \left(\bigoplus_{k=0}^{a_i-2} \rho|\cdot|^{\half{a_i-1}-k}\otimes S_1\right)+ {}^\sigma\left(\bigoplus_{k=0}^{a_i-2} \rho|\cdot|^{\half{a_i-1}-k}\otimes S_1\right)^{\vee}. \]
Thus, $\beta(\phi_{\psi})= \beta(\phi_{\psi^{-}}) (-1)^{ (a_i-1)(\dim(\rho)-1)}$. On the other hand, Lemma \ref{lem Jacquet module}(b-2) also implies 
\[ \beta(\pi(\phi,\varepsilon))= \beta( \pi(\phi^{-},\varepsilon^{-})) (-1)^{ (a_i-1)(\dim(\rho)-1)}.\]
This completes the verification of this case.

For Case (c), we have $\psi= \psi^{-}+ \rho \otimes S_1 \otimes S_2.$ Since $\varepsilon(\rho \otimes S_2 \otimes S_1)=1$, we have $\varepsilon(s_{\psi})= \varepsilon^-(s_{\psi^{-}})$ by \eqref{eq eva s_psi}. Also,
\[ \phi_{\psi}= \phi_{\psi^-}+ \rho|\cdot|^{\half{1}}\otimes S_1 + \rho|\cdot|^{\half{-1}}\otimes S_1. \]
Thus \eqref{eq goal product of sign} is again a consequence of the observations and Lemma \ref{lem Jacquet module}(c). This completes the verification of this case and the proof of the proposition. \qed

\section{\texorpdfstring{Proof of Proposition \ref{prop twisted}}{}}\label{sec working hypothesis}
We prove Proposition \ref{prop twisted} in this section. The argument is based on \cite[\S 3]{MW06}, \cite[\S 6.3, Appendix]{Xu17b} and the explicit computation of $\beta(\phi_{\psi})$ (see \eqref{eq beta(phi_psi)}), which may be known to expert. We provide enough background and explain the difference between our setting and \cite{MW06}.

\subsection{\texorpdfstring{Disconnected $\GL_N^+(E)$}{}}\label{sec twisted endoscopic transfer}
We recall the setting for twisted endoscopic transfer and the disconnected $\GL_N$ involved in this subsection. Let $G$ be the classical groups in consideration. We realize $G$ as an elliptic twisted endoscopic group of $\GL_N^+(E)= \GL_N(E) \rtimes \langle \theta \rangle$. Here $\theta$ is an outer automorphism of $\mathrm{Res}_{E/F}\GL_{N/E}(F)=\GL_N(E)$ of order 2 that preserves an $F$-splitting of $\mathrm{Res}_{E/F}\GL_{N/E}$. Let $\GL_N^{\theta}(E):= \GL_N(E) \rtimes \theta$. We fix a Whittaker datum of $\GL_N(E)$ fixed by $\theta$ throughout this section. See \cite[\S 2.1]{Art13} and \cite[\S 2.4]{Mok15} for the precise setting. 

Here is what we need about $\theta$ in the explicit computation later. We identify the set of (restricted) simple roots $\Delta$ of $\GL_N(E)$ as $\{\alpha_1,\ldots, \alpha_{N-1}\}$, and $\theta(\alpha_i)=\alpha_{N-i}$. For any subset $I \subseteq \Delta$, let $M_{I}$ denote the corresponding standard Levi subgroup that contains $I$ as simple roots. Any Levi $M$ of $\GL_N(E)$ fixed by $\theta$ is of the form
\[ \GL_{n_1}(E) \times \cdots \times \GL_{n_s}(E) \times \GL_{n_0}(E) \times \GL_{n_s}(E) \times \cdots \times \GL_{n_1}(E). \]
Let $m=(m_1,\ldots, m_s, m_0 ,m_s', \ldots , m_1') \in M$. If we let $\theta$ denote the outer conjugation for $\GL_N(E)$ with $N$ varies by abuse of notation, then $\theta(m)= ( \theta(m_1'),\ldots, \theta(m_s') , \theta(m_0), \theta(m_s),\ldots, \theta(m_1) ).$ Let $\pi$ be an irreducible representation of $\GL_N(E)$ with underlying space $V_{\pi}$. Define ${}^{\theta}\pi$ the representation on the same space with $\GL_N(E)$-action
\[ {}^\theta \pi(g)v:= \pi(\theta (g)) v.\]
Then ${}^\theta \pi$ is isomorphic to the contragredient of $\pi$.

Let $\widehat{I}(\GL_N^{\theta}(E))$ denote the space of twisted invariant distribution on $\GL_N(E)$, i.e., distributions invariant under the $\theta$-conjugation 
\[ \Ad_{\theta}(g)x:= gx\theta(g)^{-1}.  \]
Suppose $\pi^+$ is a representation of $\GL_N^+(E)$. Consider the distribution $\Theta(\pi^+)_{\theta}$, called the twisted character of $\pi^+$, that sends $f \in \mathcal{C}_{c}^{\infty}(\GL_N(E))$ to the trace of the finite rank operator 
\[ \pi^+(f)_{\theta}:= \int_{ \GL_N(E)} f(g ) \pi^+(g \theta) dg.  \]
Then $\Theta(\pi^+)_{\theta} \in \widehat{I}(\GL_N^\theta(E))$, and hence we obtain a map from $K\Pi(\GL_N^+(E))$, the Grothendieck group of finite length admissible representations  of $\GL_N^+(E)$, to $\widehat{I}(\GL_N^{\theta}(E))$. It is not hard to check that representations induced from $\GL_N(E)$ to $\GL_N^+(E)$ lie in and in fact span the kernel of this map. In other words, the following sequence is exact
 \begin{align}\label{eq exact sequence twisted character}
     K\Pi(\GL_N(E)) \xrightarrow[]{\Ind} K\Pi(\GL_N^+(E)) \xrightarrow[]{\Theta(\cdot)_{\theta}} \widehat{I}(\GL_N^\theta(E)). 
 \end{align}

Finally, let $\widetilde{\Tran}$ denote the twisted endoscopic transfer, which is a linear map
\[ \widetilde{\Tran}: \widehat{SI}(G) \to \widehat{I}(\GL_N^{\theta}(E)), \]
 where $\widehat{SI}(G)$ is the space of stable invariant distributions on the classical group $G$. See \cite[\S 2.1]{Art13} for more precise definition. See also \cite{KS99}. We shall only need the facts that it is well-defined, injective (\cite[Corollary 2.1.2]{Art13}) and compatible with Aubert-Zelevinsky involution (see \cite[\S A]{Xu17b} and \eqref{eq AZ Xu} below). For any local Arthur parameter $\psi$ of $G$, the stable distribution $\eta_{\psi}$ in Conjecture \ref{conj 2.2.1}(a) is characterized by
 \[ \widetilde{\Tran} (\eta_{\psi})= \Theta( \pi_{\psi}^+)_{\theta},\]
where $\pi_{\psi}$ is the irreducible representation in the local Arthur packet (which is an $L$-packet) $\Pi_{\psi_{\GL}}$ of $\GL_N(E)$, and $\pi_{\psi}^+$ is an irreducible representation of $\GL_N^+(E)$ extended from $\pi_{\psi}$ via Whittaker normalization, which we explain in the next subsection.

\subsection{Whittaker normalization and M{\oe}glin-Waldspurger normalization}
 Suppose that ${}^\theta \pi \cong \pi$, i.e. $\pi$ is self-dual. There are exactly two normalizations of isomorphism $T:\pi \xrightarrow[]{\sim} {}^\theta \pi$ such that $T^2=\text{id}$. If $T$ is any such normalization, then $-T$ is the other. Any choice of $T$ extends $\pi$ as a representation of $\GL_N^+(E)$ by $\pi(\theta):= T$. In this subsection, we recall two systematic ways to specify a choice of $T$, the Whittaker normalization (\cite[\S 2.2]{Art13}), and the M{\oe}glin-Waldspurger normalization \cite[\S 1.12]{MW06}.

First, we recall the notation of parabolic induction and Jacquet module for $\GL_N^+(E)$. Let $M$ be a $\theta$-invariant Levi subgroup and let $M^+:= M \rtimes \langle \theta \rangle$. Suppose $\pi$ (resp. $\sigma$) is a representation of $\GL_N^+(E)$ (resp. $M^+$), which may be regarded as a representation of $\GL_N(E)$ (resp. $M$) equipped with an action of $\theta$. Then as a representation of $M$ (resp. $\GL_N(E)$), $\Jac_{M}(\pi)$ (resp. $\Ind_M^{\GL_N(E)}$) naturally carries an action of $\theta$ also (see \cite[\S 3]{Rog88}). By abuse of notation, we still denote these maps by
\[ \Ind_{M}^{\GL_N(E)}: K\Pi(M^+) \to K\Pi(\GL_N^+(E)),\ \Jac_{M}: K\Pi(\GL_N^+(E)) \to K\Pi(M^+). \]


Now we recall the definition of Whittaker normalization from \cite[\S 2.2]{Art13}. Recall that we have fixed a $\theta$-stable Whittaker datum from the beginning. Let $\pi$ be an irreducible representation of $\GL_N(E)$ fixed by $\theta$. First, suppose $\pi$ is generic and take a Whittaker functional $\omega: V_{\pi} \to \BC$. Then we take $\theta_{W}(\pi)$ to be the unique isomorphism from $ \pi \to {}^\theta \pi$ that fixes the Whittaker functional, i.e.,
\[ \omega \circ \theta_W(\pi) =\omega.\]
Next, if $\pi$ is not generic, then we realize $\pi$ as a unique irreducible subrepresentation of its standard module $\Ind_M^{\GL_N(E)} (\sigma)$ via the (subrepresentation version of the) Langlands classification. Note that $M$ is fixed by $\theta$ and ${}^{\theta} \sigma \cong \sigma$ by the uniqueness of standard module since ${}^\theta\pi \cong \pi$. Also, $\sigma$ is essentially tempered, which implies that it is generic and hence $\theta_W(\sigma)$ is already defined. Therefore, $\Ind_M^{\GL_N(E)}(\sigma)$ carries an action of $\theta$ induced from $\theta_{W}(\sigma)$. Then $\theta_{W}(\pi)$ is defined as the restriction of this action to the subrepresentation $\pi$. In the rest of this section, we shall write $\pi^+$ (resp. $\pi^-$) the irreducible representation of $\GL_N^+(E)$ with $\theta$ acts by $\theta_W(\pi)$ (resp. $-\theta_W(\pi)$).


In \cite{MW06}, M{\oe}glin-Waldspurger introduced another normalization for $\pi_{\psi}$, which is easier to keep track of in the induction process. We recall this normalization for discrete or anti-discrete $\psi$ for simplicity. Write
\[\psi= \bigoplus_{\rho} \bigoplus_{i \in I_{\rho}} \rho \otimes S_{a_i}\otimes S_{b_i}.\]
By our assumption that $\psi$ or $\widehat{\psi}$ is discrete, we have $\rho \cong {}^{\theta} \rho$, and $a_i+b_i$ are of the same parity for any $i \in I_{\rho}$ fixing $\rho$. If $\psi$ is discrete, i.e. $b_i=1$ for any $i \in \sqcup_{\rho} I_{\rho}$, then 
\[ \pi_{\psi}= \bigtimes_{\rho} \bigtimes_{i \in I_{\rho}} \St(\rho,a_i),\]
where $\St(\rho,a_i)$ is the unique irreducible subrepresentation of 
\begin{align*}
    \rho|\cdot|^{\half{a_i-1}} \times     \rho|\cdot|^{\half{a_i-3}} \times \cdots \times     \rho|\cdot|^{\half{1-a_i}}.
\end{align*}
If $\psi$ is anti-discrete, i.e. $a_i=1$ for any $i \in \sqcup_{\rho} I_{\rho}$, then 
\[ \pi_{\psi}= \bigtimes_{\rho} \bigtimes_{i \in I_{\rho}} \Speh(\rho,b_i),\]
where $\Speh(\rho,b_i)$ is the unique irreducible subrepresentation of 
\[  \rho|\cdot|^{\half{1-b_i}} \times     \rho|\cdot|^{\half{3-b_i}} \times \cdots \times     \rho|\cdot|^{\half{b_i-1}}.\]
For $i \in I := \sqcup_{\rho} I_{\rho}$, let $B_i:= |\half{a_i-b_i}|$ and $\zeta_i:=1$ if $a_i\geq b_i$ and $\zeta_i:=-1$ otherwise. 

Now we define the normalization $\theta_{MW}(\pi_{\psi})$ inductively. Suppose $B_i=0$ for any $i\in I$. Then $\pi_{\psi}$ is a product of supercuspidal representations, and hence $\pi_{\psi}$ is generic. Define $\theta_{MW}(\pi_{\psi}):= \theta_W(\pi_{\psi})$ in this case. We shall include $\psi=0$ and $N=0$ in this case.

Suppose $B_i \neq 0$ for some $i \in I$. Take $\rho$ such that one of the following holds.
\begin{enumerate}
    \item [(1)] $ \min_{i \in I_{\rho}} \{B_i \} >0$.
    \item [(2)] $ \min_{i \in I_{\rho}} \{B_i\}=0$ but $|I_{\rho}|\geq 2$.
\end{enumerate}
In Case (1), take $j \in I_{\rho}$ such that $B_j=\min_{i \in I_{\rho}} \{B_i \}$. We have an injection
\begin{align}\label{eq MW normalization (1)}
    \pi_{\psi} \hookrightarrow \rho|\cdot|^{\zeta_i B_i} \times \pi_{\psi^-} \times \rho|\cdot|^{-\zeta_i B_i},
\end{align}
where $\psi^-:= \psi- \rho \otimes S_{a_j}\otimes S_{b_j}+ \rho \otimes S_{|a_j-2|}\otimes S_{|b_j-2|} $. Recall that $\min(a_j,b_j)=1$ by our assumption and if $a_j-2=0$ or $b_j-2=0$, ignore that term. Then $\theta_{MW}(\pi_{\psi})$ is defined to be the restriction of the action on $\rho|\cdot|^{\zeta_i B_i} \times \pi_{\psi^-} \times \rho|\cdot|^{-\zeta_i B_i} $ induced from $A^{-1}\otimes\theta_{MW}(\pi_{\psi^-}) \otimes A$, where $A$ is any choice of isomorphism from $\rho|\cdot|^{\zeta_i B_i}$ to $  {}^\theta (\rho|\cdot|^{\zeta_i B_i})\cong \rho|\cdot|^{-\zeta_i B_i}$.

In Case (2), take $j_1, j_2 \in I_{\rho}$ such that $B_{j_1}=0$ and $B_{j_2}=\min_{i \in I_{\rho}\setminus \{j_1\}} \{B_i \}$. Then let $\tau$ (resp. ${}^\theta \tau$) be the unique irreducible subrepresentation of 
\[ \rho|\cdot|^{\zeta_{j_2} B_{j_2}} \times \cdots \times  \rho|\cdot|^{0}\ \  \text{(resp.} \rho|\cdot|^{0} \times \cdots \times  \rho|\cdot|^{-\zeta_{j_2} B_{j_2}} \text{)}.\]
We have an injection
\begin{align}\label{eq MW normalization (2)}
   \pi_{\psi} \hookrightarrow \tau \times \pi_{\psi^-} \times {}^{\theta}\tau,
\end{align}
where $\psi^-:= \psi- \rho \otimes S_{a_{j_1}}\otimes S_{b_{j_1}} - \rho \otimes S_{a_{j_2}}\otimes S_{b_{j_2}}$. Then $\theta_{MW}(\pi_{\psi})$ is defined to be the action induced from $\theta_{MW}(\pi_{\psi^{-}})$ similarly as Case (1).

It turns out that if $\psi$ is discrete or anti-discrete, then $\theta_W(\pi_{\psi})=\theta_{MW}(\pi_{\psi})$. This is a special case of \cite[Theorem 5.6.1]{MW06}. {The same argument applies to the case of unitary group and we do not repeat the details here.}

\begin{thm}[{\cite[Theorem 5.6.1]{MW06}}]\label{thm MW06 sec 5}
    Suppose $\psi$ is discrete or anti-discrete. Then $\theta_W(\pi_{\psi})=\theta_{MW}(\pi_{\psi})$.
\end{thm}

\subsection{\texorpdfstring{Computation of Aubert-Zelevinsky involution}{}}
In this subsection, we recall the definition of $ \textrm{inv}^{\theta}: K\Pi(\GL_N^+(E)) \to K\Pi(\GL_N^+(E)) $ defined in \cite[\S 6.3]{Xu17b}, and then compute $\Theta( \Inv^{\theta}(\pi_{\phi}^+))_{\theta}$ for discrete local Arthur parameter $\phi$. The argument is almost the same as \cite[Lemma 3.2.1]{MW06}. There are slight differences because they consider the generalized Aubert-Zelevinsky involution, while we consider the full Aubert-Zelevinsky involution. The proof there assumed $a_i+b_i$ are all odd for simplicity, and we include the other case for completeness.

We give some motivation before stating the definition of $\textrm{inv}^{\theta}$. First, we recall the long exact sequence in \cite[Theorem 3.6]{Aub95}. Let $\pi$ be an irreducible representation of $\GL_N(E)$ such that ${}^\theta \pi \cong \pi$. Write $V$ for the underlying space of $\pi$. Recall that we let $\Delta=\{\alpha_1,\ldots, \alpha_{N-1}\}$ be the set of (restricted) simple roots and $\theta(\alpha_i)=\alpha_{N-i}$. For any $I \subseteq \Delta$, let $V_I$ denote the underlying space of $\Ind_{M_I}^{\GL_N(E)} \circ \Jac_{M_I}(\pi)$. Then for any $\Delta \supseteq J \supseteq I$, there is a natural map $\phi_{I}^J: V_{J} \to V_{I}$ such that 
\[ \phi^{K}_I= \phi_{I}^J \circ \phi^{K}_J \]
for any $\Delta \supseteq K \supseteq J \supseteq I$. Suppose $\Delta\setminus I= \{ \alpha_{n_1},\ldots, \alpha_{n_s}  \}$ with $n_1 <\cdots< n_s$. Set
\[ e^I:= e^{\alpha_{n_1}} \wedge \cdots \wedge e^{\alpha_{n_s}} \in \Lambda^{|\Delta\setminus I|} (\BC^{|\Delta\setminus I|}).\]
Then for any $J= I \sqcup \{\alpha_m\}$, define $\xi_{I}^J \in \{\pm 1\}$ such that
\[ e^I= \xi^J_I \cdot e^J  \wedge e^{\alpha_m}.\]
Now consider the following sequence
\[ 0 \to V \xrightarrow[]{d_{|\Delta|}} \bigoplus_{|J|=|\Delta|-1} V_J \xrightarrow[]{d_{|\Delta|-1}} \bigoplus_{|J|=|\Delta|-2} V_J \xrightarrow[]{d_{|\Delta|-2}} \cdots \xrightarrow[]{d_{1}} V_{\emptyset} \to 0,   \]
where
\[ d_{i}:=  \bigoplus_{J \supset I,\ |J|=|I|+1=i+1} \xi^{J}_I \cdot \phi^J_I.\]
It follows from the definition that
\begin{enumerate}
    \item [$\oldbullet$] we have $d_{i} \circ d_{i+1}=0$, i.e. it is a complex;
    \item [$\oldbullet$] suppose $M_{I_0}$ is a Levi subgroup corresponding to the cuspidal support of $\pi$. Then $V_{J}=0$ for any $|J|<|I_0|$.
\end{enumerate}
A crucial step to show that $D_{\GL_N(E)}$ sends irreducible representations to irreducible representations modulo sign is to verify that the complex is indeed exact.

\begin{thm}[{\cite[Theorem 3.6, corollary 3.9]{Aub95}}]\label{thm Aubert}
   Suppose $M_{I_0}$ is a Levi subgroup corresponding to the cuspidal support of $\pi$. Then the following sequence is exact.
\begin{align}\label{eq seq aubert}
    0 \to V \xrightarrow[]{d_{|\Delta|}} \bigoplus_{|J|=|\Delta|-1} V_J \xrightarrow[]{d_{|\Delta|-1}} \bigoplus_{|J|=|\Delta|-2} V_J \xrightarrow[]{d_{|\Delta|-2}} \cdots \xrightarrow[]{d_{|I_0|+1}} \bigoplus_{|J|=|I_0|} V_{J} .
\end{align}    
In particular, 
\[(-1)^{|I_0|} \textrm{coker} (d_{|I_0|+1})= \sum_{J \subseteq \Delta} (-1)^{|J|} \Ind_{M_J}^{\GL_N(E)} \circ \Jac_{M_J}(\pi).\]
If $\pi$ is irreducible, then so is $ \widehat{\pi}:=\textrm{coker} (d_{|I_0|+1})$.
\end{thm}

Note that if ${}^\theta \pi \cong \pi,$ then $ {}^\theta \widehat{\pi} \cong \widehat{\pi}$ since Aubert-Zelevinsky involution commutes with contragredient (\cite[Theorem 1.7(1)]{Aub95}).

Now we introduce the action of $\theta$. Let $\{\pi^+,\pi^-\}$ be two extensions of $\pi$ where $\theta$ act on $\pi^+$ via the Whittaker normalization $\theta_W(\pi)$. Each term in the exact sequence \eqref{eq seq aubert} is already equipped with a $\theta$ action induced from $\theta_{W}(\pi)$. That is, we have
\[ \theta_J: \Ind_{M_J}^{\GL_N(E)} \circ \Jac_{M_J} (\pi^+) \to \Ind_{M_{\theta(J)}}^{\GL_N(E)} \circ \Jac_{M_{\theta(J)}} (\pi^+).\]
However, it is not always true that $ \bigoplus_{|J|=j}\theta_J$ commutes with $d_{j}$ because of the signs $\xi^J_I$. Indeed, from $\theta(\alpha_{i})=\alpha_{N-i}$, one can check that for $J=I\sqcup \{\alpha_m\}$, 
\[ \xi^J_I \cdot \xi^{\theta(J)}_{\theta(I)}= (-1)^{|\Delta|-| I|}= (-1)^{\left\lfloor\half{|\Delta|-|I|} \right\rfloor} (-1)^{\left\lfloor\half{|\Delta|-|J|} \right\rfloor}. \]
Therefore, if we let $\theta$ acts on the exact sequence \eqref{eq seq aubert} via $\theta_j:= (-1)^{\left\lfloor\half{|\Delta|-j} \right\rfloor}\bigoplus_{|J|=j} \theta_J$, then it becomes an exact sequence of  representations of $\GL_N^+(E)$. As a consequence, let $\varepsilon_j= (-1)^{\left\lfloor\half{|\Delta|-j} \right\rfloor}$. There exists an $\varepsilon' \in \{+,-\}$ such that
\[ (-1)^{|I_0|}(\widehat{\pi})^{\varepsilon'}= \sum_{J \subseteq \Delta} (-1)^{|J|} \Ind_{M_J}^{\GL_N(E)} \circ \Jac_{M_J}(\pi^{\varepsilon_{|J|}} ).\]
We may simplify the notation by passing to the twisted characters. Observe that if $\theta(J)=J$, then 
\[ \Theta(\Ind_{M_J}^{\GL_N(E)} \circ \Jac_{M_J}(\pi^{\varepsilon_{|J|}} ))_{\theta}= \varepsilon_{|J|}  \Theta(\Ind_{M_J}^{\GL_N(E)} \circ \Jac_{M_J}(\pi^+ ))_{\theta}.\]
On the other hand, if $\theta(J) \neq J$, then $\theta_{|J|}$ exchanges $V_J$ and $V_{\theta(J)}$, which implies that
\[ \Theta(\Ind_{M_J}^{\GL_N(E)} \circ \Jac_{M_J}(\pi^{\varepsilon_{|J|}} ) \oplus  \Ind_{M_{\theta(J)}}^{\GL_N(E)} \circ \Jac_{M_{\theta(J)}}(\pi^{\varepsilon_{|\theta(J)|}} ) )_{\theta}=0.\]
Therefore, we obtain that
\begin{align}\label{eq Theta AZ}
     (-1)^{|I_0|}\varepsilon' \Theta((\widehat{\pi})^+)_{\theta}= \Theta\left(  \sum_{J \subseteq \Delta, \theta(J)=J} (-1)^{\left\lfloor\half{|\Delta|-|J|} \right\rfloor+|J|} \Ind_{M_J}^{\GL_N(E)} \circ \Jac_{M_J}(\pi^{+} )\right)_{\theta}.
\end{align}
Finally, let $(A_{M_J})_{\theta}$ be the $\theta$-coinvariant of the maximal split central torus of $M_J$. Then $ \dim((A_{M_J})_{\theta})= \left\lfloor \half{|\Delta|-|J|+1} \right\rfloor$, and hence 
\[ (-1)^{\left\lfloor\half{|\Delta|-|J|} \right\rfloor+|J|}=(-1)^{\dim((A_{M_J})_{\theta})} (-1)^{|\Delta|}.\]
This leads to the definition of $\textrm{inv}^{\theta}$ in \cite[\S A]{Xu17b}.

\begin{defn}\label{def twisted Aubert dual}
Suppose $\pi$ is a representation of $\GL_N(E)$ fixed by $\theta$. Let $\pi^+$ be an extension of $\pi$. Define 
\[ \Inv^{\theta}(\pi^+):= \sum_{M \in \mathcal{M}^{\theta} } (-1)^{\dim(( A_{M})_{\theta})} \Ind_{M}^{\GL_N(E)} \circ \Jac_{M}(\pi^{+} ), \]
which is an element in $K\Pi(\GL_N^+(E))$. Here $\mathcal{M}^{\theta}$ is the set of $\theta$-invariant standard Levi subgroups, and $(A_M)_{\theta}$ is the $\theta$-coinvariant of the maximal split central torus of $M$.
\end{defn}

Now we prove the following lemma based on the argument in \cite[Lemma 3.2.2]{MW06}.

\begin{lemma}\label{lem inv theta}
    Let $\phi$ be a discrete local Arthur parameter of a classical group $G$ and let $\psi=\widehat{\phi}$. Then 
    \[\Theta( \Inv^{\theta}(\pi_{\phi}^+))_{\theta}=(-1)^{r(G)}\beta(\phi_{\psi}) \Theta(\pi_{\psi}^+)_{\theta}.\]
\end{lemma}
\begin{proof}
    From \eqref{eq Theta AZ}, we already have $\Theta(\textrm{inv}^{\theta}(\pi_{\phi}^+))_{\theta}= \varepsilon \Theta(\pi_{\psi}^{+})_{\theta}$ for some $\varepsilon \in \{\pm 1\}$. We compute this sign explicitly. 
    First, we construct certain Levi subgroup $M_{I_{\psi}}$ and a representation $\sigma$ on it. Write
    \[ \phi= \bigoplus_{\rho \in R} \bigoplus_{i \in I_{\rho}} \rho \otimes S_{a_i} \otimes S_1.\]
    Decompose $R=R_{odd} \sqcup R_{even}$, where $R_{odd}=\{\rho \in R \ | \ a_i+ b_i \text{ are odd }\forall i \in I_{\rho}\}$. Write $I_{\rho}=\{1,\ldots, |I_{\rho}|\}$ where $a_1<\cdots < a_{|I_{\rho}|}$. For $\rho \in R_{odd}$, define
    \[ \tau_{\rho,i}:= \rho|\cdot|^{\half{1-a_i}} \otimes \cdots \otimes \rho|\cdot|^{\half{-1}},\  {}^\theta\tau_{\rho,i}:= \rho|\cdot|^{\half{1}} \otimes \cdots \otimes \rho|\cdot|^{\half{a_i-1}}\]
for each $i \in I_{\rho}$. For $\rho \in R_{even}$, define
\[\tau_{\rho,i}:= \rho|\cdot|^{\half{1-a_{i}}} \otimes \cdots \otimes \rho|\cdot|^{1},\ {}^\theta \tau_{\rho,i}:=  \rho|\cdot|^{1} \otimes \cdots \otimes \rho|\cdot|^{\half{a_{i}-1}}\]
if $i \in I_{\rho}=\{1,\ldots, |I_{\rho}|\}$ is odd, and
\[\tau_{\rho,i}:= \rho|\cdot|^{\half{1-a_{i}}} \otimes \cdots \otimes \rho|\cdot|^{0},\ {}^\theta \tau_{\rho,i}:=  \rho|\cdot|^{0} \otimes \cdots \otimes \rho|\cdot|^{\half{a_{i}-1}}\]
if $i \in I_{\rho}$ is even. Then for any $\rho \in R$ define  
\[ \tau_{\rho}:= \tau_{\rho,1} \otimes \cdots \otimes \tau_{\rho,|I_{\rho}|},\   {}^\theta\tau_{\rho}:= {}^\theta\tau_{\rho,|I_{\rho}|} \otimes \cdots \otimes {}^{\theta}\tau_{\rho,1}. \]
Finally, define 
\[ \sigma:= \bigotimes_{\rho \in R} \tau_{\rho} \otimes \left(\bigtimes_{\rho \in R_{even}, |I_{\rho}|\text{ is odd}}\rho \right)  \otimes \bigotimes_{\rho \in R} {}^\theta \tau_{\rho},   \]
where we fix any order for the first product and the opposite order for the last product so that ${}^\theta \sigma \cong \sigma$. Let $M_{I_{\psi}}$ be the corresponding Levi subgroup. It is not hard to see from \eqref{eq beta(phi_psi)} that
\begin{align}\label{eq phi_psi, A_M}
    (-1)^{r(G)} \beta(\phi_{\psi})=\dim((A_{M_{I_{\psi}}})_{\theta}).
\end{align}
On the other hand, let $\Ind_{M_{I_{\psi}}}^{M_{I_{\psi}}\rtimes \langle \theta  \rangle }(\sigma)= \sigma^+ \oplus \sigma^-$, where $\theta$ acts on $\sigma^+$ via the Whittaker normalization $\theta_{W}(\sigma)$, which is the same as the M{\oe}glin-Waldspurger normalization $\theta_{MW}(\sigma)$ by Theorem \ref{thm MW06 sec 5}. Then by the definition of $\theta_{MW}(\sigma)$, we see that for $\varepsilon' \in \{\pm 1\}$, the semisimplification of $\Jac_{M_{I_{\psi}}}(\pi_{\psi}^{\varepsilon'})$ contains $\sigma^{\varepsilon'}$ of multiplicity one but does not contain $\sigma^{-\varepsilon'}$. Thus for any representation $\tau$ of $M_{I_{\psi}} \rtimes \langle \theta \rangle$, let $\widetilde{m}(\tau)$ denote the multiplicity of $\sigma^{+}$ in the semisimplification of $\tau$ minus that of $\sigma^-$. Our goal becomes the computation of $ \widetilde{m}(\Jac_{M_{I_{\psi}}}\circ\Inv^{\theta}(\pi_{\phi}^+))$, which is equal to the following expression
\begin{align}\label{eq goal twisted AZ}
   \sum_{J \subseteq \Delta, \theta(J)=J } (-1)^{\dim(( A_{M_J})_{\theta})} \widetilde{m}( \Jac_{M_{I_{\psi}}} \circ \Ind_{M_J}^{\GL_N(E)} \circ \Jac_{M_J}(\pi_{\phi}^{+} )) .
\end{align} 

Let $W$ denote the Weyl group of $\GL_N(E)$, $\Phi^+$ denote the set of positive roots and let 
\[\mathcal{D}(I,J):= \{w\in W\ | \ w^{-1}(I) \subseteq \Phi^+,\ w(J) \subseteq \Phi^+  \}\]
for any $I,J \subseteq \Delta$. The geometric lemma (\cite[\S 2.11]{BZ77}) implies that
\[  \Jac_{M_{I_{\psi}}} \circ \Ind_{M_J}^{\GL_N(E)} \circ \Jac_{M_J}(\pi_{\phi})=  \sum_{w \in \mathcal{D}(I_{\psi},J)} \Ind_{M_{I_{\psi}}\cap wM_{J}w^{-1}}^{M_{I_{\psi}}} \Ad(w) \left( \Jac_{M_{J}\cap w^{-1}M_{I_{\phi}}w}\pi_{\psi}^+\right).  \]

For $J \subset \Delta$ such that $\theta(J)=J$ and $w \in \mathcal{D}(I_{\psi},J)$, let $\Pi(w,J)$ denote each term in the right hand side above. If $\theta(w) \neq w$, then $\theta$ exchanges $\Pi(w,J)$ with $\Pi(\theta(w),J)$ and hence $\widetilde{m}( \Pi(w,J)+\Pi(\theta(w),J))=0$. Thus we may ignore these terms. We may further ignore those $\Pi(w,J)$ such that $M_{I_{\psi}} \not\subseteq wM_{J}w^{-1}$. Indeed, if $K$ is a $\theta$-invariant proper subset of $I_{\psi}$ and $\tau$ is an irreducible representation of $M_{K}$ such that $\sigma$ is a subquotient of $\Ind_{M_K}^{M_{I_{\psi}}} \tau$, then ${}^\theta \tau \not\cong\tau$ by the form of $\sigma$. Therefore, if $M_{I_{\psi}} \not\subseteq wM_{J}w^{-1}$, there is a decomposition $\Pi(w,J)= V_1 \oplus V_2$ such that $\theta$ exchanges $V_1, V_2$, and hence $\widetilde{m}(\Pi(w,J))=0$. Note that if $M_{I_{\psi}} \subseteq w M_{J}w^{-1}$ and $w \in \mathcal{D}(I,J)$, then $w^{-1}(I_{\psi}) \subseteq J \subseteq \Delta$.

In summary, we may simplify \eqref{eq goal twisted AZ} as follows.
\begin{align}\label{eq goal twisted AZ 2}
  \sum_{J \subseteq \Delta, \theta(J)=J } (-1)^{\left\lfloor \half{|\Delta|-|J|+1} \right\rfloor} \sum_{\substack{w \in \mathcal{D}(I_{\psi},J),\ \theta(w)=w, \\  w^{-1}(I_{\psi}) \subseteq J}} \widetilde{m}\left(\Ad(w)   \left( \Jac_{ w^{-1}M_{I_{\psi}}w}\pi_{\phi}^+\right)\right) .
\end{align} 

Next, we would like to exchange the order of the summation. Fixing a $w$ such that $w^{-1}(I_{\psi}) \subseteq \Delta$.  The subsets $J$ that appear in the summation related to $w$ are exactly the subsets of $\Delta$ satisfying 
\begin{enumerate}
    \item [$\oldbullet$] $\theta(J)=J$,
    \item [$\oldbullet$] $w (J) \subseteq \Phi^+$,
    \item [$\oldbullet$] $w^{-1}(I_{\psi}) \subseteq J .$
\end{enumerate}
 Thus, let $\Delta^{w}:=\{\alpha \in \Delta\ | \ w(\alpha)\in \Phi^+\} \setminus w^{-1}(I_{\psi})$ and let $\Delta^w_{\theta}$ be the set of $\theta$-cosets of $\Delta^w$. Any $J$ that appears in the summation related to $w$ can be written uniquely as $J_{\theta}^w \sqcup w^{-1}(I_{\psi})$ for some $J_{\theta}^w \subseteq \Delta_{\theta}^w$. Therefore, we may rewrite \eqref{eq goal twisted AZ 2} as
 \begin{align}\label{eq goal twisted AZ 3}
    \sum_{\substack{w \in W, \theta(w)=w,\\ w^{-1}(I_{\psi})\subseteq \Delta}} \left(
    \sum_{J^w_{\theta} \subseteq \Delta^{w}_{\theta}} (-1)^{\half{|J_{\theta}^w|}} \right) \cdot (-1)^{\left\lfloor \half{|\Delta|-|I_{\psi}|+1} \right\rfloor} \cdot \widetilde{m}\left(\Ad(w)   \left( \Jac_{ w^{-1}M_{I_{\psi}}w}\pi_{\phi}^+\right)\right).
\end{align} 
The alternating sum is non-zero only when $\Delta_{\theta}^w$ is empty. That is, $w=w_0$, the unique element in $W$ such that $w_0^{-1}(I_{\psi}) \subseteq \Phi^+$ and $w_0^{-1}(\Delta\setminus I_{\psi}) \subseteq -\Phi^+$. Thus $w_0^{-1}M_{I_{\psi}} w_0= M_{I_{\psi}}$ and 
\begin{align*}
    \Ad(w_0)^{-1}\sigma= \bigotimes_{\rho \in R} \widetilde{\tau_{\rho}} \otimes \left(\bigtimes_{\rho \in R_{even}, |I_{\rho}|\text{ is odd}}\rho \right)  \otimes \bigotimes_{\rho \in R} \widetilde{{}^\theta \tau_{\rho}},
\end{align*}
where the order of the product is the same as the order taken in the definition of $\sigma$ and $\widetilde{(\cdot)}$ is the contragredient. By the definition of M{\oe}glin-Waldspurger normalization, $\Jac_{M_{I_{\psi}}} \pi_{\phi}^+$ contains $\Ad(w_0)^{-1}(\sigma^+)$ of multiplicity one but does not contain $\Ad(w_0)^{-1}(\sigma^-)$. We conclude that 
\[ \widetilde{m}(\Jac_{M_{I_{\psi}}}\circ\Inv^{\theta}(\pi_{\phi}^+))= (-1)^{\left\lfloor \half{|\Delta|-|I_{\psi}|+1} \right\rfloor},\]
and hence by \eqref{eq phi_psi, A_M}, we have
\[\Theta( \Inv^{\theta}(\pi_{\phi}^+))_{\theta}=(-1)^{r(G)}\beta(\phi_{\psi}) \Theta(\pi_{\psi}^+)_{\theta}.\]
This completes the proof of the lemma.
 \end{proof}

\subsection{Proof of Proposition \ref{prop twisted}}
The last result we need is the compatibility of Aubert-Zelevinsky involution and twisted endoscopic transfer, which we recall now.
Though $\Inv^{\theta}$ is defined on $K\Pi(\GL_N^+(E))$ in Definition \ref{def twisted Aubert dual}, according to the exact sequence \eqref{eq exact sequence twisted character}, we may regard it as an operator on $\widehat{I}(\GL_N^{\theta}(E))$ by abuse of notation. That is, if $\eta= \Theta(\pi^+)_{\theta}$ for certain $\pi^+ \in K\Pi(\GL_N^+(E))$, then
\begin{align*}
     \Inv^{\theta} ( \eta ):= \Theta( \Inv^{\theta}(\pi^+) )_{\theta}
\end{align*}
is independent of the choice of $\pi^+$. Let $\Inv_{G}:= (-1)^{r(G)} D_{G}$ where $D_G$ is the Aubert-Zelevinsky involution on the classical group $G$ defined in \eqref{eq AZ}. Then the following equality is proved in \cite[\S A]{Xu17b} 
\begin{align}\label{eq AZ Xu}
\widetilde{\Tran} \circ \Inv_{G} = \Inv^{\theta} \circ \widetilde{\Tran}.    
\end{align}
Now we prove Proposition \ref{prop twisted}.

\begin{proof}[proof of Proposition \ref{prop twisted}]
 Let $\phi$ be a tempered local Arthur parameter of $G$ and let $\psi=\widehat{\phi}$. Let $\pi_{\phi}$ (resp. $\pi_{\psi}$) be the irreducible representation of $\GL_N(E)$ corresponding to the local Arthur parameter $\phi_{\GL}$ (resp. $\psi_{\GL}$) and $\pi_{\phi}^+$ (resp. $\pi_{\psi}^+$) the extension with respect to the Whittaker normalization. The stable distribution $\eta_{\phi}$ (resp. $\eta_{\psi}$) is characterized by 
\[ \widetilde{\Tran}(\eta_{\phi})= \Theta(\pi_{\phi}^+)_{\theta}\  (\text{resp. }\widetilde{\Tran}(\eta_{\psi})= \Theta(\pi_{\psi}^+)_{\theta}).\]

 First, suppose $\phi$ is discrete. Then Lemma \ref{lem inv theta} implies that
  \begin{align*}
 \widetilde{\Tran} ( D_{G}(\eta_{\phi}))&=  (-1)^{r(G)} \widetilde{\Tran} ( \Inv_{G} (\eta_{\phi}))\\
 &= (-1)^{r(G)}\textrm{inv}^{\theta} (\widetilde{\Tran}(\eta_{\phi}))\\
 &= (-1)^{r(G)}\textrm{inv}^{\theta} (\pi_{\phi}^+)\\
&=  \beta(\phi_{\psi}) \Theta( \pi_{\psi}^+)_{\theta}\\
 &= \beta(\phi_{\psi})\widetilde{\Tran} (\eta_{\psi}).
 \end{align*}
Since $\widetilde{\Tran}$ is an injection (\cite[Corollary 2.1.2]{Art13}), we have verified that $ D_G(\eta_{\phi})=\beta(\phi_{\psi}) \eta_{\psi} $ when $\phi$ is discrete.

For general tempered $\phi$, write $\phi= \phi_0+ (\phi_1+ {}^\sigma \phi_1^{\vee})$ where $\phi_0$ is discrete. Let $\psi=\widehat{\phi}$ and $\psi_i= \widehat{\phi_i}$ for $i =0,1$. By Proposition \ref{prop reduction to discrete}, we have
\[ \eta_{\phi}= \pi_{\phi_1} \rtimes \eta_{\phi_0} \ (\text{resp. } \eta_{\psi}= \pi_{\psi_1} \rtimes \eta_{\psi_0}),\]
where $\pi_{\phi_1}$ (resp. $\pi_{\psi_1}$) is the unique irreducible representation in the local Arthur packet $\Pi_{\phi_1}$ (resp. $\Pi_{\psi_1}$) of $\GL_{\dim(\psi_1)}(E)$. Then by Observations (i), (ii) and (iii) in \S \ref{sec proof of prop eq of sign Moeglin}, we obtain
\begin{align*}
    D_G( \eta_{\phi})&=    D_G( \pi_{\phi_1}\rtimes \eta_{\phi_0})\\
    &= \beta( \pi_{\psi_1} )  \pi_{\psi_1} \rtimes  D_{G_0}(\eta_{\phi_0})\\
    &= \beta_{\GL}( \phi_{\psi_1} )  \pi_{\psi_1} \rtimes  (\beta(\phi_{\psi_0}) \eta_{\psi_0})\\
    &= \beta(\phi_{\psi}) \eta_{\psi}. 
\end{align*}
This completes the proof of Proposition \ref{prop twisted}.
\end{proof}

\section{Aubert-Zelevinsky involution of generic representations}\label{dual of generic reps}
In this section, we compute the $L$-parameter of the Aubert-Zelevinsky involution of generic representations of quasi-split classical groups, following the idea of \cite{Jan18} under an assumption (Working Hypothesis \ref{assu AZ}) which will be removed in future work.

First, we give some notations. Let $\pi$ be a generic representation of $G(V_{an,\mathfrak{r}})$ with $L$-parameter $\phi_{\pi}$. By the classification of generic dual (see \cite[Theorem 4.23]{JL24} for example), we may realize $\pi$ as an irreducible parabolic induction
\begin{align}\label{eq gen classification}
    \pi= \bigtimes_{i=1}^f \tau_f \rtimes \pi_{temp},
\end{align}
where $\tau_i$'s are generic representations of some $\GL_{d_i}(E)$ and $\pi_{temp}$ is a generic representation that lies in a tempered local Arthur packet $\Pi_{\phi_{temp}}$ of $G(V_{an,r})$ with $r \leq \mathfrak{r}$. Let $\psi:= \widehat{\phi_{temp}}$. Denote
\[ \widehat{\phi_{\pi}}:= \bigoplus_{i=1}^f  (\phi_{\widehat{\tau_i}}+{}^\sigma\phi_{\widehat{\tau_i}}^{\vee})+ \phi_{\psi}. \]
Remark that the restriction of $\phi_{\widehat{\tau_i}}$'s to $\SL_2(\BC)$ are trivial by the classification of generic representations of $\GL_{d_i}(E)$, and hence the restriction of the whole $\widehat{\phi_{\pi}}$ to $\SL_{2}(\BC)$ is also trivial. Indeed, $\phi_{\pi}$ and $\widehat{\phi_{\pi}}$ correspond to the unique open orbit and the closed orbit (the zero orbit) in the associated Vogan variety respectively, and they are the Pyasetskii involution of each other (see \cite{CDFZ22}, \cite[\S 4.2, 6.4]{CFMMX22}). Here is the main result of this subsection.

\begin{prop}\label{prop generic}
    In the setting above, for any generic representation $\pi$, we have $\phi_{\widehat{\pi}}= \widehat{\phi_{\pi}}$. In particular, $\phi_{\widehat{\pi}}|_{\SL_2(\BC)}$ is trivial.  
\end{prop}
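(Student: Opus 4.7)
The plan is to reduce to the tempered subcase via the generic classification \eqref{eq gen classification} and then appeal to Theorem \ref{thm main intro}. First I would treat the case $f=0$, so $\pi=\pi_{temp}$ is itself generic tempered with $L$-parameter $\phi_{temp}$, and $\widehat{\phi_\pi}=\phi_\psi$ where $\psi=\widehat{\phi_{temp}}$. By the construction of anti-tempered packets in Theorem \ref{thm main intro}, the involution $\widehat{\pi_{temp}}$ lies in the anti-tempered packet $\Pi_\psi$; since every member of $\Pi_\psi$ has the same $L$-parameter $\phi_\psi$, this settles the tempered subcase.

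For general $\pi$, the central step is to establish the identity
\[
\widehat{\pi} \;=\; \widehat{\tau_1} \times \cdots \times \widehat{\tau_f} \rtimes \widehat{\pi_{temp}}
\]
of irreducible representations. The Aubert--Zelevinsky operator commutes with parabolic induction in the Grothendieck group, and \eqref{eq gen classification} is irreducible by the generic classification, so $\widehat{\pi}$ must occur as an irreducible constituent of the right-hand side; upgrading this to an equality requires the irreducibility of the right-hand side, which is the content of Working Hypothesis \ref{assu AZ}. Granted this, the $L$-parameter of $\widehat{\pi}$ is obtained by summing the parameters of the factors:
\[
\phi_{\widehat{\pi}} \;=\; \bigoplus_{i=1}^{f} \bigl(\phi_{\widehat{\tau_i}} + {}^\sigma\phi_{\widehat{\tau_i}}^\vee\bigr) + \phi_\psi \;=\; \widehat{\phi_\pi}.
\]

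For the final assertion on $\SL_2(\BC)$-triviality, I would check the factors separately. The classical-group factor satisfies $\phi_\psi|_{\SL_2^D(\BC)}=1$ because $\psi$ is anti-tempered and the construction $\phi_\psi$ affects only the $\SL_2^A(\BC)$-argument. On each $\GL$-factor, a generic $\tau_i$ is a full induction of essentially square-integrable representations $\delta(\rho_{ij},a_{ij})|\cdot|^{x_{ij}}$; applying the Zelevinsky involution factorwise (which is valid on irreducible inductions) realizes $\widehat{\tau_i}$ as a product of the corresponding Speh representations, whose $L$-parameters are direct sums of characters of $W_E$, hence trivial on $\SL_2(\BC)$. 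The main obstacle is thus the induction identity in the general case, which hinges on the irreducibility statement in Working Hypothesis \ref{assu AZ}; its verification beyond the symplectic and split odd orthogonal cases of \cite{Ato20} is deferred to future work.
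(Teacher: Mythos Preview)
Your reduction from general $\pi$ to $\pi_{temp}$ is essentially the paper's first step, and the irreducibility of $\widehat{\tau_1}\times\cdots\times\widehat{\tau_f}\rtimes\widehat{\pi_{temp}}$ is in fact automatic: since the Aubert operator commutes with parabolic induction up to sign in the Grothendieck group and sends irreducibles to $\pm$irreducibles, an irreducible full induction goes to an irreducible full induction. This has nothing to do with Working Hypothesis~\ref{assu AZ}, which is a statement about an embedding $\pi(\phi,\varepsilon)\hookrightarrow (\rho|\cdot|^{(a-1)/2})^{\times m}\rtimes\pi(\phi^-,\varepsilon^-)$ for tempered representations of good parity; you have misidentified its content.

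The genuine gap is in your treatment of the tempered case. You assert that ``every member of $\Pi_\psi$ has the same $L$-parameter $\phi_\psi$''; this is false. A local Arthur packet $\Pi_\psi$ is in general strictly larger than the $L$-packet $\Pi_{\phi_\psi}$, and its members can have distinct $L$-parameters. Theorem~\ref{thm main intro} only tells you $\widehat{\pi_{temp}}=\pi(\psi,\varepsilon_\psi^{M/MW})\in\Pi_\psi$; it does not by itself pin down $\phi_{\widehat{\pi_{temp}}}$. The paper explicitly flags this point just before the proof: for symplectic and orthogonal groups one can invoke \cite[Proposition~7.4.1]{Art13} (using that $\varepsilon_\psi^{M/MW}\in\widehat{\mathcal{S}}_{\phi_\psi}$) to conclude $\widehat{\pi_{temp}}\in\Pi_{\phi_\psi}$, but the paper then gives a separate argument that also covers unitary groups. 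That argument is an induction on $\dim(\phi_1)$ after reducing to good parity: the base case $\phi_1=0$ has $\phi=\psi$ both tempered and anti-tempered so $\Pi_\psi=\Pi_{\phi_\psi}$, and the inductive step uses precisely the embedding in Working Hypothesis~\ref{assu AZ} to peel off a factor $\rho|\cdot|^{(A_\rho-1)/2}$, apply the involution, and match the resulting standard module with the one predicted by $\phi_\psi$. Your proposal bypasses this entire mechanism and therefore does not establish the tempered case.
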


Here are two applications of Proposition \ref{prop generic}.

\begin{remark}\label{remark generic dual}\ 
    \begin{enumerate}
        \item Assuming the closure ordering conjecture $($see \cite[Conjecture 3.1]{Xu24}, \cite[Conjecture 1.2]{HLLZ22}$)$ for local Arthur packets, Proposition \ref{prop generic} implies the enhanced Shahidi conjecture for quasi-split classical groups. See \cite[\S 6]{HLLZ22} for more discussion.
        \item Proposition \ref{prop generic} verifies \cite[Conjecture 1.1]{HLLS24} on the upper bound of wavefront sets for generic representations.
    \end{enumerate}
\end{remark}

Let us explain the connection between Proposition \ref{prop generic} and other results in the previous sections. As we shall see in the proof, it is straightforward to reduce the problem from $\pi$ to $\pi_{temp}$, so we assume $\pi$ is tempered generic, and hence $\phi_{\pi}=\phi_{temp}$. The representation $\pi=\pi(\phi, \varepsilon)$ is generic with respect to the fixed Whittaker datum if and only if $\varepsilon$ is trivial (see \cite[Proposition 8.3.2(a)]{Art13} and \cite{Ato17}). Thus, Theorem \ref{thm main} implies that 
\[ \widehat{\pi}=\pi(\psi, \varepsilon_{\psi}^{M/MW}).\]
Since $\varepsilon_{\psi}^{M/MW} \in \widehat{\mathcal{S}}_{\phi_{\psi}} \subseteq \widehat{\mathcal{S}}_{\psi}$ (see Remark \ref{rmk in Lpacket}), for quasi-split special orthogonal, symplectic, and unitary groups, it is a consequence of \cite[Proposition 7.4.1]{Art13} and \cite[Proposition 8.4.1]{Mok15} that $\widehat{\pi} \in \Pi_{\phi_{\psi}}.$ We shall give a different proof including unitary groups.
In the proof, we need an analogue of Lemma \ref{lem Jacquet module}(b-1), (b-2) and (c) for tempered $\phi$. We state it in the Working hypothesis below. This can be verified by similar argument in \cite[\S 5]{Ato20}, where he verified these statements for $\Sp_{2n}(F)$ and split $\SO_{2n+1}(F)$. A detailed proof for other groups will be provided in a future work.

\begin{assumption}\label{assu AZ}
    Suppose $\pi(\phi,\varepsilon)$ is a tempered representation of $G(V)$ of good parity. If $\phi$ contains $\rho \otimes S_{a}\otimes S_1$ of multiplicity $m$, and one of the following holds.
    \begin{enumerate}
        \item [(1)] $a>2$ and $\phi$ does not contain $\rho \otimes S_{a-2}\otimes S_1$.
        \item [(2)] $a>2$ and $\phi$ contains $\rho \otimes S_{a-2}\otimes S_1$ with $\varepsilon(\rho\otimes S_{a}\otimes S_1)\varepsilon(\rho\otimes S_{a-2}\otimes S_1)=1$.
        \item [(3)] $a=2$ and $\varepsilon(\rho\otimes S_{a}\otimes S_1)=1.$
    \end{enumerate}
    Then, let $\phi^{-}:= \phi- (\rho\otimes S_{a}\otimes S_1)^{\oplus m}+(\rho\otimes S_{a-2}\otimes S_1)^{\oplus m}$ and $\varepsilon^{-} \in \widehat{\mathcal{S}}_{\phi^-,\chi_{V^-}}$ be given by the same recipe in Lemma \ref{lem Jacquet module}. We have an injection
    \[\pi(\phi,\varepsilon) \hookrightarrow \underbrace{\rho|\cdot|^{\half{a-1}}\times\cdots \times \rho|\cdot|^{\half{a-1}}}_{m \text{ copies}}   \rtimes \pi(\phi^{-}, \varepsilon^{-}).   \]    
\end{assumption}

Now we prove Proposition \ref{prop generic}.
\begin{proof}[Proof of Proposition \ref{prop generic}.]

First, we reduce the problem from $\pi$ to $\pi_{temp}$. Taking Aubert-Zelevinsky involution on \eqref{eq gen classification}, we obtain
\[ \widehat{\pi}= \bigtimes_{i=1}^f \widehat{\tau_f} \rtimes \widehat{\pi_{temp}}. \]
This implies (for example, see \cite[Lemma 10.3]{HLLS24})
\[ \phi_{\widehat{\pi}}= \bigoplus_{i=1}^f  (\phi_{\widehat{\tau_i}}+{}^\sigma\phi_{\widehat{\tau_i}}^{\vee}) + \phi_{\widehat{\pi_{temp}}},\]
which completes the reduction. Furthermore, by Proposition \ref{prop reduction to discrete}, we may write
\[ \pi_{temp}= \tau_{bp} \rtimes \pi_{gp},\]
where $\pi_{gp}$ lies in a tempered local Arthur packet $\Pi_{\phi_{gp}}$ of good parity (also see \cite[Theorem 6]{Moe06a}). Then, the same argument reduces the problem to $\pi_{gp}$. Thus, we assume $\phi= \phi_{temp}$ is of good parity in the rest of the proof.

Write 
\begin{align*}
    \phi= \bigoplus_{\rho \in R_0} \bigoplus_{i=1}^{a_{\rho}} (\rho \otimes S_{2i} \otimes S_1)^{\oplus m_{\rho,i}}+ \bigoplus_{\rho \in R_1} \bigoplus_{i=0}^{a_{\rho}} (\rho \otimes S_{2i+1} \otimes S_1)^{\oplus m_{\rho,i}},
\end{align*}
where $ a_{\rho} \in \Z_{\geq 1} $ and $m_{\rho} \in \Z_{\geq 0}$. For $ 0 \leq i \leq a_{\rho}$, let $M_{\rho,i}:= \sum_{j=i}^{a_{\rho}} m_{\rho,i}$ and let
\begin{align*}
   \phi_0&:= \bigoplus_{\rho\in R_1} (\rho\otimes  S_1)^{\oplus M_{\rho,0}},\\
   \phi_1&:= \bigoplus_{\rho\in R_0} \bigoplus_{i=1}^{a_{\rho}} (\rho |\cdot|^{i-\half{1}} \otimes  S_1)^{\oplus M_{\rho,i}} + \bigoplus_{\rho\in R_1} \bigoplus_{i=1}^{a_{\rho}} (\rho |\cdot|^{ i} \otimes  S_1)^{\oplus M_{\rho,i}} .
\end{align*}
We have $\widehat{\phi_{\pi}}=\phi_{\psi}= \phi_0+(\phi_1+ {}^\sigma\phi_1^{\vee}) $. It suffices to show that
\begin{align}\label{eq goal AZ}
    \widehat{\pi} \hookrightarrow \bigtimes_{\rho \in R_0} \bigtimes_{i=1}^{a_{\rho}} (\rho|\cdot|^{-a_{\rho}-\half{1}+i})^{\times M_{\rho,i}} \times \bigtimes_{\rho \in R_1} \bigtimes_{i=0}^{a_{\rho}-1} (\rho|\cdot|^{-a_{\rho}+i})^{\times M_{\rho,i}} \rtimes \pi(\phi_0, \varepsilon_0)
\end{align}
for some $\varepsilon_0 \in \widehat{\mathcal{S}}_{\phi, \chi_{V_0}}$. Here the order of the product follows the convention 
\[\bigtimes_{i=1}^{n} \rho_i= \rho_1\times \rho_2 \times \cdots \times \rho_{n}.\]
Indeed, if \eqref{eq goal AZ} holds, then the right hand side is exactly the standard module of $\widehat{\pi}$ (the subrepresentation version), which implies that $\phi_{\widehat{\pi}}= \phi_{\psi}= \widehat{\phi_\pi}$ by the compatibility between Langlands classification and local Langlands correspondence (see \cite[Theorem 3.3(5)]{Ato20} for example). We apply induction on $\dim(\phi_1)$ to prove \eqref{eq goal AZ}. 

Suppose $\phi_1=0$. Then $\phi= \psi$ is both tempered and anti-tempered. In this case $\widehat{\pi} \in  \Pi_{\psi}=\Pi_{\phi}$, which is also equal to the $L$-packet $\Pi_{\phi_{\psi}}$. This verifies \eqref{eq goal AZ} in this case. Suppose $\phi_1 \neq 0$. Then we may construct $\pi^-= \pi(\phi^-, \varepsilon^{-})$ as in Working Hypothesis \ref{assu AZ} with
\[ \phi^-= \phi - (\rho \otimes S_{A_{\rho}} \otimes S_1)^{ \oplus m_{\rho,a_{\rho}}}+ (\rho \otimes S_{A_{\rho}-2} \otimes S_1)^{ \oplus m_{\rho,a_{\rho}}},\]
where $A_{\rho}= 2a_{\rho}$ if $\rho \in R_{0}$ and $A_{\rho}= 2a_{\rho}+1$ if $\rho \in R_{1}$. Note that $\varepsilon^{-}$ is also trivial and hence $\pi^-$ is also generic. Working Hypothesis \ref{assu AZ} implies
\[ \pi \hookrightarrow \underbrace{\rho|\cdot|^{\half{A_{\rho}-1}}\times \cdots \times\rho|\cdot|^{\half{A_{\rho}-1}}}_{m_{\rho, a_{\rho}} \text{ copies}}\rtimes \pi^-.\]
Then (see the Algorithm in the introduction of \cite{Jan18}, \cite[(1.2)]{Jan18} and \cite[Theorem 8.3.4]{MR18})
   \begin{align}\label{eq AZ final}
       \widehat{\pi} \hookrightarrow \underbrace{\rho|\cdot|^{-\half{A_{\rho}-1}}\times \cdots \times\rho|\cdot|^{\half{A_{\rho}-1}}}_{m_{\rho, a_{\rho}} \text{ copies}} \rtimes \widehat{\pi^-} \hookrightarrow \underbrace{\rho|\cdot|^{-\half{A_{\rho}-1}}\times \cdots \times\rho|\cdot|^{\half{A_{\rho}-1}}}_{m_{\rho, a_{\rho}} \text{ copies}}\rtimes M(\widehat{\pi^-}),
   \end{align}
   where $M(\widehat{\pi^{-}})$ is the standard module of $\widehat{\pi^{-}}$. Here we use the fact that $m_{\rho,a_{\rho}}=M_{\rho,a_{\rho}}$ and the assumption that $ {}^\sigma \rho^{\vee}= \rho$ since $\phi$ is of good parity. The induction hypothesis on $\pi^-$ implies that $M(\widehat{\pi^-})$ should match the right hand side of \eqref{eq goal AZ} for $\pi^-$. Thus, comparing $\phi_{\psi}$ and $\phi_{\psi^{-}}$, one can see that \eqref{eq AZ final} is exactly \eqref{eq goal AZ} for $\pi$. This completes the proof of the proposition.   
\end{proof}

\end{document}